\newcommand{\wh}{\widehat}
\newtheorem{theorem}{Theorem} [section]
\newtheorem{lemma}[theorem]{Lemma}
\newtheorem{proposition}[theorem]{Proposition}
\newtheorem{remark}[theorem]{Remark}
\newtheorem{definition}[theorem]{Definition}
\newtheorem{corollary}[theorem]{Corollary}
\newcommand{\I}{\hspace{0.5mm}\text{I}\hspace{0.5mm}}
\newcommand{\II}{\text{I \hspace{-2.8mm} I} }
\newcommand{\noi}{\noindent}
\newcommand{\R}{\mathbb{R}}
\newcommand{\C}{\mathbb{C}}
\newcommand{\bul}{\bullet}
\newcommand{\bE}{\mathbb{E}}
\newcommand{\bP}{\mathbb{P}}
\newcommand{\bR}{\mathbb{R}}
\newcommand{\cB}{\mathcal{B}}
\newcommand{\cF}{\mathcal{F}}
\newcommand{\cP}{\mathcal{P}}
\newcommand{\bv}{\big\vert}
\newcommand{\W}{\mathcal{W}}
\newcommand{\E}{\mathbb{E}}
\newcommand{\ff}{\mathfrak{f}}
\newcommand{\F}{\mathcal{F}}
\newcommand{\cG}{\mathcal{G}}
\newcommand{\cN}{\mathcal{N}}
\newcommand{\bfN}{\mathbf{N}}
\newcommand{\scrN}{\mathscr{N}}
\newcommand{\al}{\alpha}
\newcommand{\be}{\beta}
\newcommand{\dl}{\delta}
\newcommand{\eps}{\varepsilon}
\newcommand{\s}{\sigma}
\newcommand{\wt}{\widetilde}
\newcommand{\dx}{\partial_x}
\newcommand{\dt}{\partial_t}
\renewcommand{\o}{\omega}
\renewcommand{\O}{\Omega}
\newcommand{\les}{\lesssim}
\newcommand{\jb}[1]
{\langle #1 \rangle}
\newcommand{\ind}{\mathbf 1}
\newcommand{\PP}{\mathbb{P}}
\newcommand{\N}{\mathbb{N}}
\newcommand{\Lip}{\textup{Lip}}
\newcommand{\Var}{\textup{Var}}
\newcommand{\fm}{\mathfrak{m}}
\newcommand{\fH}{\mathfrak{H}}
\newcommand{\dom}{\textup{dom}}
\numberwithin{equation}{section}
\numberwithin{theorem}{section}
\begin{document}
\baselineskip = 14pt

\title[CLT for  SNLW with pure-jump L\'evy white noise]
{Central limit theorem  for stochastic nonlinear wave equation   with pure-jump L\'evy white noise}

 \author[R.M. Balan and G. Zheng]
{Raluca M. Balan and Guangqu Zheng}

\address{Raluca M. Balan,
Department of Mathematics and Statistics\\
University of Ottawa\\
150 Louis Pasteur Private\\
Ottawa, ON, K1N 6N5\\
Canada}

\email{Raluca.Balan@uottawa.ca}

\address{Guangqu Zheng,
Department of Mathematics and Statistics\\
Boston University\\
665 Commonwealth Ave\\
Boston, MA 02215\\
USA}
\email{gzheng90@bu.edu}

\subjclass[2020]{Primary 60H15; secondary 60F05, 60F17, 60H07, 60G55}

\keywords{stochastic  wave equation;
L\'evy  noise;  Malliavin-Stein method;
predictability; Heisenberg's commutation relation;
spatial ergodicity;  central limit theorems;
asymptotic independence.}

\vspace{-4mm}

\begin{abstract}  

In this paper, we study the random field solution to the stochastic 
nonlinear wave equation (SNLW) with constant initial conditions and
multiplicative noise $\sigma(u)\dot{L}$,
where the nonlinearity is encoded in a Lipschitz function 
$\sigma: \mathbb{R}\to\mathbb{R}$
and $\dot{L}$ denotes a  pure-jump L\'evy white noise on 
$\mathbb{R}_+\times\mathbb{R}$
with finite variance. Combining tools from It\^o calculus and Malliavin calculus,
we are able to establish the Malliavin differentiability of the solution with sharp 
moment bounds for the Malliavin derivatives. As an easy consequence, 
we obtain the spatial ergodicity of the solution to 
SNLW
that leads to a law of large number result for the  spatial integrals of the solution 
over $[-R, R]$ as $R\to\infty$. 
One of the main results of this paper is the obtention of the corresponding 
Gaussian fluctuation with rate of convergence in Wasserstein distance. 
To achieve this goal, we  adapt the discrete Malliavin-Stein bound 
from  Peccati, Sol\'e, Taqqu, and Utzet  ({\it Ann. Probab.}, 2010), 
and further combine it with the aforementioned 
moment bounds of Malliavin derivatives and It\^o tools.
Our work  substantially improves
  our previous results (\textit{Trans.~Amer.~Math.~Soc.}, 2024) 
  on  the linear equation that  heavily relied on the explicit chaos expansion of the  solution. 
In current  work, we also establish a functional version, an almost sure version
of the central limit theorems, and the (quantitative) asymptotic independence 
of spatial integrals from the solution. The asymptotic independence result
is established based on  an observation 
of L. Pimentel (\textit{Ann.~Probab.}, 2022)
and a further adaptation of Tudor's generalization  
(\textit{Trans.~Amer.~Math.~Soc.}, 2025) to the Poisson setting.

\end{abstract}

\date{\today; Corresponding author: R.M.~Balan.}
\maketitle

\vspace{-10mm}

\tableofcontents

\baselineskip = 14pt

\section{Introduction}

The study of stochastic partial differential equations (SPDEs)
  is an active research area,
which has been growing tremendously over the past four decades, 
providing the rigorous framework for modeling random phenomena
 that evolve in space and time. 
Among various approaches  in the literature,
 the random field approach initiated by Walsh in his Saint-Flour notes \cite{walsh86} 
 has been very influential, being embraced quickly by the scientific community.\footnote{A 
 random field  solution is a collection
of real-valued random variables indexed by space-time.} 
 Walsh's random field approach was built on It\^o's theory of stochastic integration, 
 and was further pushed forward by Dalang in \cite{dalang99}. As a result, 
 the random field approach also bears the name of Dalang-Walsh's approach. 
Two families of  equations that have been extensively studied using this approach are
   the {\em stochastic heat equations} (SHEs) and  the {\em stochastic wave equations} 
(SWEs).

Pragmatically speaking,  
an SPDE often describes a complex system with two main  inputs 
(i.e., the initial condition  and the noise).  
We will not discuss about the initial condition here, 
but we will make a few comments about the noise within the random field approach. 
The classical example is the stochastic (nonlinear) wave equation posed on 
$\R_+\times\R$:

\noi
\begin{align}
(\dt^2 - \dx^2) u(t,x) = \s(u (t,x)) \dot{W}(t,x), \label{gSWE}
\end{align}

\noi
where $\s: \R\to\R$ is  deterministic, Lipschitz continuous,
and $\dot{W}$ stands for the space-time Gaussian white noise on $\R_+\times\R$.
This is the running example in Walsh's notes \cite{walsh86}. 
The space-time Gaussian white noise $\dot{W}$  is a generalization of the Brownian motion,
and one can develop the corresponding It\^o integration theory to study
the above SWE \eqref{gSWE}. In fact,  the above SWE admits a unique   random field solution 
only in spatial dimension one, and the same comment applies 
to SHE (with $\dt^2 - \dx^2$ replaced by the heat operator 
$\dt - \frac{1}{2}\dx^2$). When the spatial dimension is strictly bigger than one, 
the above SPDEs fall into the realm of singular SPDEs and thus do not
admit a random-field solution.
In a seminal   work \cite{dalang99},
Dalang introduced the
 Gaussian noises  that are white in time and spatially homogeneous 
 such that  the spectral measure of spatial correlation function satisfies 
 certain mild integrability assumption (known as the Dalang's condition nowadays).
In Dalang's generalized framework, one retains the martingale features
necessary for It\^o's theory due to the noises being white-in-time.
Another popular example of Gaussian noises
 is the fractional Gaussian noise, which is a generalization of the fractional Brownian motion. 
In this case, due to the absence of martingale structures (when the Hurst index in the 
temporal direction
is different from $1/2$), one has to bring in other techniques to develop a solution  theory.
For example, when $\s(u)=u$ (i.e., we consider a linear equation),
one can apply the Malliavin calculus (particularly Wiener chaos expansion)
to establish a solution theory; see, e.g., \cite{HHNT15, BS17}.
For the general theory of  Malliavin calculus, we refer interested readers
to  Nualart's monograph \cite{Nua06};
see also  \cite{NN18} that presents 
 a pedagogical introduction to Malliavin calculus in  both Gaussian and Poisson settings.
 
 In this paper, we consider another family of driving noises,
 the space-time  L\'evy noises that generalize the classical  L\'evy processes;
 see \cite{B15} for the corresponding 
 It\^o integration theory and its applications to SPDEs. 
An in-depth analysis of various  solution properties  of such 
SPDEs driven by finite-variance L\'evy noise is possible using a discrete version of Malliavin calculus, with respect to the underlying Poisson random measure of the noise. More details about Malliavin calculus in the Poisson setting can be found in   \cite{Last16,LPS16,LP18,PSTU10}.

Among   many studies of various solution properties of SPDEs, a problem that comes persistently
in the recent literature is related to the asymptotic behaviour of the spatial integral of the solution, when the integration region tends to cover the whole space. 
Such a problem was initiated by   Huang, Nualart, and Viitasaari in their paper \cite{HNV},
which established the Gaussian fluctuation of the spatial averages of the SHE with
space-time Gaussian white noise. Since then, there has been a rapidly growing literature 
on the limit theorems for these spatial statistics. This research theme also includes
 SWEs and spatially homogeneous Gaussian noises and/or fractional Gaussian noises. 
 The following sample of relevant references will convince the reader of the order of magnitude of this growing area: \cite{HNVZ, DNZ20,  BNZ,   BNQSZ,  CKNP22, CKNPjfa,  NZ20a, NZ20b,  NSZ21, NZ22, NXZ22, PU22, Ebina23a, Ebina23b}. In a nutshell, by combining Stein's method for normal approximation with Malliavian calculus techniques, one can prove that with appropriate normalization, the spatial integral converges in distribution to the standard normal distribution, and can provide precise estimates for the rate of this central convergence, yielding a so-called {\em qualitative central limit theorem} (QCLT). From this, without a significant amount of effort, one can
 also  usually derive   a {\em functional central limit theorem} (FCLT) for the spatial integral process.

In a recent article \cite{BZ23}, we initiated a similar program 
for the stochastic wave equation with pure-jump L\'evy white noise $\dot{L}$. 
More precisely, we  studied the {\em hyperbolic Anderson model},  in which the noise
takes the multiplicative form $u \dot{L}$. Due to the linearity in $u$, one has 
an explicit Wiener chaos expansion of the solution, which is crucial 
for us to obtain fine estimates of Malliavin derivatives up to second order. 
Then, we combined them with Trauthwein's improved second-order 
Poincar\'e inequality \cite{Tara} to establish the QCLTs.

The goal of the present article is to investigate  the QCLT problem for the SWE with general nonlinearity. In addition, we study the asymptotic independence between the spatial integral and the solution, by developing a Poissonian  analogue of the multivariate Malliavin-Stein method  in \cite{tudor23} (see Section \ref{SEC4_2}).

\subsection{Stochastic nonlinear wave equation}

Consider
the following stochastic  nonlinear  wave equation driven
by a pure-jump space-time L\'evy white noise $\dot{L}$
on $\R_+\times\R$:

\noi
\begin{align}
\begin{cases}
 & \hspace{-4mm} ( \dt^2 - \dx^2) u   = \s(u) \dot{L} \\[0.5em]
&  \hspace{-4mm}  \big( u(0, \bul), \dt u(0, \bullet) \big) = (1, 0),
\end{cases}
\label{SWE}
\end{align}

\noi
where $\s: \R\to\R$ is a (deterministic) Lipschitz continuous function with $\s(1) \neq 0$,
and  $\dot{L}(s, y) = L(ds, dy)$ denotes the  space-time (pure-jump) L\'evy white 
noise on $\R_+\times\R$. To give the rigorous definition of the noise, we need to introduce several elements.

\noi
\begin{enumerate}
\item[(a)] Let $\R_0 := \R\setminus\{0\}$ be equipped with
the distance $d(x,y)= |x^{-1} - y^{-1}|$, and  $\nu$ be a $\s$-finite measure on $\R_0$
 satisfying the {\it L\'evy-measure  condition}\footnote{Sometimes, we will treat $\nu$
as a measure on $\R$ with $\nu(\{0\})=0$.
}

\noi  
\begin{align*} 
\int_{\R_0} \min\{1, |z|^2\} \nu(dz) <\infty.
\end{align*}
In addition, throughout this paper, we will assume that $\nu$ satisfies
\begin{align}
m_2:= \int_{\R_0}|z|^2\nu(dz) \in(0, \infty).
\label{m2}
\end{align}
This guarantees that the noise has finite variance, which is essential for our purposes.

\item[(b)] Let $\fH=L^2(\mathbf{Z},\mathcal{Z},\fm)$, where
\begin{align*} 
\mathbf{Z}\coloneqq \R_+\times\R\times\R_0,
\,\,\,
\fm = {\rm Leb}\times   \nu,
\end{align*} 
$\mathcal{Z}$ is the Borel $\s$-field on $\mathbf{Z}$,  
and $ {\rm Leb}$ denotes the Lebesgue measure on $\R_+\times\R$.

\item[(c)] Let $N$ be a {\em Poisson random measure} on $(\mathbf{Z}, \mathcal{Z})$ 
with intensity  $\fm$, defined on a complete probability space
 $(\Omega,\cF,\bP)$, and $\widehat{N} = N - \fm$ be the compensated version of $N$;
  see Definition \ref{def:PRM} below. Let $\mathbb{F}=\{ \F_t\}_{ t\in\R_+}$ be the filtration induced by $N$, given as in \eqref{filtraF}.

\item[(d)] For any Borel set $A\subset \R_+\times\R$ with ${\rm Leb}(A) <\infty$, let
\begin{equation}
\label{def-L}
L(A) = \int_{A\times \R_0} z \widehat{N}(ds, dy, dz),
\end{equation}
where the integral is defined in the It\^o sense. Then,
 $L(A)$ is an infinitely divisible random variable  with
characteristic function 
\[
\E\big[ e^{i\lambda L(A)}\big]
= \exp\left\{{\rm Leb}(A)\int_{\R_0}  (e^{i\lambda z}  - 1 - i\lambda z  ) \nu(dz) \right\},
 \quad \lambda \in\R.
\]
Moreover, $L(A_1),\ldots,L(A_n)$ are independent for disjoins sets $A_1,\ldots,A_n$.
See \cite{B15} for more on the L\'evy noise.

\end{enumerate}

The product between the unknown $\s(u)$ and the L\'evy noise $\dot{L}$ is interpreted in the It\^o sense. By analogy with Duhamel's principle, we introduce the following definition. 

\begin{definition}
 We say that a process $u=\{u(t,x):t\geq 0,x \in \bR\}$ is a  {\rm(}mild{\rm)} solution to   \eqref{SWE} if $u$ is $\mathbb{F}$-predictable 
and satisfies the following integral equation
\begin{align}\label{mild}
u(t,x) = 1 + \int_0^t \int_\R G_{t-s}(x-y) \s\big( u(s,y) \big)  L(ds, dy),
\end{align}

\noi
where
\begin{equation}
\label{def-G}
G_t(x)= \frac{1}{2}\ind_{\{ |x|< t\}} , \, \text{with $G_t = 0$ for $t\leq 0$},
\end{equation} 
denotes  the fundamental solution to the deterministic wave equation on $\bR_{+} \times \bR$.
\end{definition}

Here,  the stochastic integral in \eqref{mild} is understood in the It\^o sense:
\[
\int_{\R_{+}\times\R} X(s, y) L(ds, dy) 
= \int_{\R_{+}\times\R\times\R_0} X(s, y)z \widehat{N}(ds, dy, dz),
\]
whenever  the right hand-side is well-defined; see 
Section \ref{SEC2_1} for 
more details on the stochastic integration.

\smallskip

Note that if $\s(1) = 0$, then  $u(t, x) \equiv 1$ satisfies equation \eqref{mild},
and is indeed the unique solution to \eqref{SWE}. To exclude this trivial case,
we have assumed    $\s(1) \neq 0$ in this paper. 

\smallskip

From \cite[Theorem 1.1]{BN16}, we know that equation \eqref{SWE} has
a unique random field solution $u$ that is $L^2(\PP)$-continuous 
 with
$
\sup\big\{ \E [ | u(t,x)|^2  ]:  (t,x)\in [0,T]\times\R\} <\infty
$
 for any  finite $T > 0$. 
 Moreover, if 
\begin{equation}
\label{mp}
m_p := \int_{\R_0} |z|^p \nu(dz) <\infty 
\end{equation}
for some $p\geq 2$, then

\noi
\begin{align} \label{KTP}
K_{T, p} : = \sup_{(t,x)\in [0,T] \times \bR} \E\big[ | u(t,x) |^p \big] <\infty.
\end{align}

\noi
Because $\s$ is Lipschitz, this implies that
\begin{align}\label{KTPs}
K_{T,p,\s}:= \sup_{(t,x)\in [0,T] \times \bR} \E\big[ | \s( u(t,x) ) |^p \big] 
 \leq 2^{p-1} \big(  |\s(0)|^p +  \Lip^p( \s)   K_{T,p} \big),
\end{align}

\noi 
where  $\textup{Lip}(\s)$ denotes the  Lipschitz constant of $\s$.
For $h:\R^n\to\R$,
\begin{align}\label{def_Lip}
\textup{Lip}(h):=\sup\Big\{ \frac{|h(x)-h(y)|}{|x-y|}:  \, \text{ $x\neq y$ in $\R^n$} \Big\}.
\end{align}

Since the It\^o integral in \eqref{mild}
 has zero mean, $\bE[u(t,x)]=1$ for all $(t,x) \in \bR_+ \times \bR$. 
 For any $R>0$ and $t>0$, we consider the (centered) {\em spatial integral} of the solution:

\noi
\begin{align}\label{FRT}
F_R(t) =  \int_{-R}^R \big( u(t,x) - 1 \big) \, dx.
\end{align}

Let $\sigma_R^{2}(t)=\bE\big[F_R^2(t)\big]$ be the variance of $F_R(t)$.
The main goal of this article is to show that for any fixed $t>0$, 
$F_R(t)/\s_R(t)$ converges in distribution to $Z \sim \cN(0,1)$
as $R \to \infty$, 
where the rate of convergence can be   described using the Wasserstein distance.
We recall that the {\em Wasserstein distance} between two random vectors $F$ and $G$
 on $\R^n$ is given by
\begin{align}\label{WASS_def}
d_{\rm Wass}(F,G)=  \big|\bE[h(F)]-\bE[h(G)]\big|,
\end{align}

\noi
where the supremum  runs over all  Lipschitz functions $h:\bR^{n}\to \bR$ with $\textup{Lip}(h) \leq 1$ and $\textup{Lip}(h)$ defined as in \eqref{def_Lip}.

The study of asymptotic distribution of the spatial integrals \eqref{FRT}
was first performed in the work \cite{HNV}, which was partially motivated 
by the mathematical intermittency.  
In retrospect, one can proceed with the following heuristics.
For equations like \eqref{SWE},  the corresponding 
dynamics begin with the constant initial conditions and subject to 
the spatially homogeneous noises, and thus admits stationary solutions (if 
any). Then, a natural question to ask after the stationarity is that
\begin{center}
``{\it is the invariant $\s$-algebra generated by the spatial shifts trivial?}''
\end{center}
Or equivalently, one can ask whether or not the solution to \eqref{SWE} is {\it spatial ergodic.}
An affirmative answer to this question would lead to a law-of-large number  type result,
i.e., $F_R(t)/R \to 0$ almost surely and in $L^2(\PP)$ as $R$ tends to infinity. 
It is then very natural to investigate the second-order fluctuation of $F_R(t)$.
In the previous work \cite{BZ23}, we addressed these questions for the first
time for SPDEs driven by a L\'evy noise, and we focused
on the linear equation, with $\s(u) = u$ in \eqref{SWE}.
In the current paper, we work on the stochastic nonlinear wave equation \eqref{SWE}
with general nonlinearity.

\subsection{Main results}

We are now ready to present the first main result of this article.

\begin{theorem}
\label{thm_main} Recall the definition of $m_2$ from \eqref{m2}.
We assume 
 $m_2 \in(0,\infty)$ and $\s(1)\not=0$. 
 Then, the following statements hold true. 


\smallskip
\noi
{\rm (i)}  \textup{(\textsf{spatial ergodicity})} For any $t\in\R_+$, $\{u(t,x) :x\in \bR\}$ is strictly stationary and ergodic.  
Consequently, the following law of large numbers  holds:
\begin{align}\label{LLN}
\frac{F_R(t)}{R} \to 0 \quad \mbox{in $L^2(\PP)$ and almost surely as $R \to \infty$}.
\end{align}

\smallskip
\noi
{\rm (ii)}  \textup{(\textsf{limiting covariance})} For any $t,s\geq 0$,
\begin{equation}
\label{lim-cov}
\lim_{R \to \infty}\frac{1}{R}\bE[F_R(t)F_R(s)]=2\int_{\bR}{\rm Cov}\big(u(t,x),u(s,0) \big)dx=:K(t,s).
\end{equation}
Consequently, $\s_R^2(t)  \asymp R$ as $R\to\infty$. Moreover,  $\s_R^2(t) > 0$ for all $t>0$ and $R>0$.

\smallskip
\noi
{\rm (iii)}  \textup{(\textsf{QCLT})} Suppose that $m_4<\infty$. Then, for any $t>0$, there exists a constant $C_t>0$ depending on $t$, such that
\begin{equation}
\label{QCLT-rate}
d_{\rm Wass}\left( \frac{F_R(t)}{\s_R(t)}, \cN(0,1) \right) \leq  C_t R^{-1/2}.
\end{equation}

\smallskip
\noi
{\rm (iv)}  \textup{(\textsf{FCLT})} Suppose that $m_4<\infty$. Then,
the process $\{ \tfrac{1}{\sqrt{R}} F_R(t): t\in\R_+\}$
converges in law to a centered continuous Gaussian process $\cG=\{\cG(t)\}_{t\geq 0}$
with the covariance structure $K$ as in \eqref{lim-cov}.

\end{theorem}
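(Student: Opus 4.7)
The plan is to exploit two structural features of \eqref{mild}: the finite propagation speed encoded in $G_t=\tfrac{1}{2}\ind_{|x|<t}$, and the independence of $L$ on disjoint Borel sets. A Picard iteration applied to \eqref{mild} shows that $u(t,x)$ is measurable with respect to the restriction of $L$ to the backward light cone $\Lambda_{t,x}:=\{(s,y)\in[0,t]\times\R:|y-x|\leq t-s\}$. In particular $\{u(t,x):x\in\R\}$ is $2t$-dependent; together with the translation invariance of $\fm$ and the constant initial datum, this gives strict stationarity and mixing, hence ergodicity, and Birkhoff's ergodic theorem then delivers the almost-sure LLN in (i). The $L^2(\PP)$ version follows from the variance estimate in (ii).

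For part (ii), the same cone argument (applied at times $t,s$) gives $\Lambda_{t,x}\cap\Lambda_{s,0}=\emptyset$ whenever $|x|>t+s$, so ${\rm Cov}(u(t,x),u(s,0))=0$ in that regime. By stationarity,
\begin{align*}
\frac{1}{R}\E[F_R(t)F_R(s)]=\frac{1}{R}\int_{-2R}^{2R}(2R-|z|)\,{\rm Cov}\bigl(u(t,z),u(s,0)\bigr)\,dz,
\end{align*}
and dominated convergence yields the limit $K(t,s)$ (the integrand being bounded via \eqref{KTP}). To show $\sigma_R^2(t)>0$, apply It\^o's isometry to the representation
\begin{align*}
F_R(t)=\int_0^t\!\!\int_\R \Phi_{R,t}(s,y)\,\s(u(s,y))\,L(ds,dy),\qquad \Phi_{R,t}(s,y):=\int_{-R}^R G_{t-s}(x-y)\,dx,
\end{align*}
to obtain $\sigma_R^2(t)=m_2\int_0^t\!\int_\R \Phi_{R,t}(s,y)^2\,\E[\s(u(s,y))^2]\,ds\,dy$. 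The right-hand side is strictly positive because $\E[\s(u(0,y))^2]=\s(1)^2\neq 0$, the map $s\mapsto\E[\s(u(s,y))^2]$ is continuous by the $L^2(\PP)$-continuity of $u$, and $\Phi_{R,t}\not\equiv 0$ near $s=0$.

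For part (iii), I would apply the Poissonian Malliavin-Stein bound for the Wasserstein distance from \cite{PSTU10}, normalized by $\sigma_R(t)$. That bound estimates $d_{\rm Wass}(F_R(t)/\sigma_R(t),\cN(0,1))$ by $L^2(\fm)$-type integrals of products built from the first and second difference operators $DF_R(t)$ and $D^2F_R(t)$. These are controlled via the sharp moment bounds on $D_{s,y,z}u(r,x)$ and $D^2 u$ announced in the introduction, which behave (up to $L^p(\PP)$-bounded random factors) like $|z|\,G_{r-s}(x-y)$ and its two-cone second-order analogue. Since these derivatives are supported in backward light cones, integrating their kernels against $\ind_{[-R,R]}(x)$ produces total volume of order $R$; combined with $\sigma_R^2(t)\asymp R$ from (ii), every term in the Malliavin-Stein inequality contributes $O(R^{-1/2})$. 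The principal obstacle here, and the main technical work of the paper, is the derivation of the sharp Malliavin-derivative bounds when $\s$ is \emph{nonlinear}: unlike \cite{BZ23}, where a Wiener-chaos expansion supplied them directly, one must now differentiate \eqref{mild} via the chain rule for the Poisson difference operator, yielding an integral equation for $Du$ that is closed by a Gr\"onwall-type argument in $L^p(\PP)$ under hypothesis \eqref{mp}.

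Part (iv) combines finite-dimensional convergence with tightness. Finite-dimensional convergence at any $0<t_1<\cdots<t_k$ follows from the multivariate version of the Poissonian Malliavin-Stein bound applied to $(F_R(t_i)/\sqrt R)_{i=1}^k$, using exactly the same Malliavin-derivative estimates as in (iii) and the covariance limit from (ii) to identify the Gaussian target with covariance $K$. Tightness on $C(\R_+)$ is obtained from Kolmogorov's criterion via a moment bound $\E|F_R(t)-F_R(s)|^{2p}\lesssim R^{p}|t-s|^{\alpha p}$ for some $\alpha>0$ and $p$ sufficiently large, derived by applying the BDG inequality for Poisson stochastic integrals to the representation of $F_R(t)-F_R(s)$ as a stochastic integral with kernel $\Phi_{R,t}-\Phi_{R,s}$ supported on a set of Lebesgue measure of order $R|t-s|$.
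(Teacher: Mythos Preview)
Your approach to parts (i) and (ii) via the backward light cone and $2t$-dependence is correct and in fact more elementary than the paper's. The paper proves stationarity by a Picard-iteration argument (Lemma~\ref{lem:stat}) and then deduces ergodicity from the Malliavin-derivative bound \eqref{Z1} via a Poincar\'e-type criterion from \cite{BZ23}; similarly, for the integrability in \eqref{rho-integr} it uses the Poincar\'e inequality (Lemma~\ref{2-Poincare}) together with \eqref{Z1}. Your finite-dependence observation bypasses all of this Malliavin machinery for (i)--(ii), which is a genuine simplification. The paper's route has the advantage that it would still work for noises with infinite-range spatial correlation, whereas your cone argument is tied to the white-in-space structure and the wave kernel.

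Part (iii), however, contains a real gap. You propose to control the Malliavin--Stein bound using \emph{second} difference operators $D^2F_R(t)$ and corresponding bounds on $D^2u$, but no such bounds are available here: for a general Lipschitz $\s$ the solution is only known to be once Malliavin differentiable, and the introduction announces only the first-order estimate \eqref{key}. The paper is explicit about this obstruction (Remark~1.2): the second-order Poincar\'e route of \cite{BZ23,Tara} worked in the linear case because of the explicit chaos expansion, but the integral equation for $D^2u_n$ is not tractable for nonlinear $\s$. The paper's remedy is a tailor-made first-order Malliavin--Stein bound (Theorem~\ref{PSTU-new}) that exploits the Skorohod representation $X=\dl(v)$ with the \emph{explicit} integrand $v=V_{t,R}/\s_R(t)$ from \eqref{vectorV}; the resulting terms are $\sqrt{\Var\langle DF_R,V_{t,R}\rangle_\fH}$ and $\int\|(D_\xi F_R)^2 V_{t,R}(\xi)\|_1\,\fm(d\xi)$, both of which can be bounded using only \eqref{Z1} and the integral equation \eqref{eq_MD} for $Du$. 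Without this device, or some alternative avoiding $D^2u$, your argument for (iii) does not close.

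For (iv) your outline is essentially the paper's, but note that $p=2$ already suffices for tightness (the increment bound is $\E|F_R(t)-F_R(s)|^2\lesssim R|t-s|^2$), so no higher moments beyond $m_4$ are needed.
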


As mentioned above, the novelty of our result is the fact that we consider a general Lipschitz function $\s$. In the case $\s(u)=u$, the techniques of \cite{BZ23} rely heavily on the fact that the solution and its Malliavin derivative $Du$ have explicit   chaos expansions. This is no longer true when $\s$ is arbitrary. The techniques that we use in the present paper start with 
a careful analysis of the integral equation \eqref{eq_MD} satisfied by $Du$, paying special attention to some predictability issues that arise for giving a rigorous meaning to this equation.
As a result,   we substantially improve the main result of \cite{BN17}, which showed that $Du$ satisfies the integral equation \eqref{eq_MD} in the affine linear case $\s(u)=au+b$. 
 Next,  we move on to deriving the following {\em key estimate}:
\begin{equation}
\label{key}
\|D_{r,y,z}u(t,x)\|_p \leq C G_{t-r}(x-y)|z| 
\end{equation}
provided that $m_p<\infty$ for some $p\geq 2$
for $\fm$-almost every $(r,y,z)\in\mathbf{Z}$; see   \eqref{Z1} below. 
To prove \eqref{key}, we begin with the integral equation for $Du_n$ with   $u_n$  the $n$-th 
Picard iteration and then proceed with applications of  Rosenthal's inequality 
to obtain uniform-in-$n$ bound on $\|D_{r,y,z}u_n(t,x)\|_p$.
In order to pass $n\to\infty$ for obtaining \eqref{key},
we use a weak convergence argument and the technique of differentiating 
Radon measures; see Proposition \ref{prop_MD} for more details. 
Finally, we develop a variant of the Malliavin-Stein bound in the Poisson case (Theorem \ref{PSTU-new}), which together with the key estimate \eqref{key} yields the desired QCLT in 
Theorem \ref{thm_main}-(iii).

\begin{remark}
\rm In our previous work. we established the QCLT
under the weaker assumption $m_{p} + m_{p/2} <\infty$
for some $p\in(2, 4]$, when $\s(u)=u$.
As mentioned before, due to the linearity in $u$, we have explicit 
chaos expansions, from which we obtain bounds of Malliavin derivatives up to second order.
These bounds can be easily combined with a recent work of Trauthwein \cite{Tara}
so as to establish the QCLT under the said assumption. 
In the present paper, we are working with very general Lipschitz nonlinearity, 
then the solution $u(t,x)$ is only expected   to be Malliavin differentiable of first order. 
Even if we may assume more regularity on $\s$, our method of establishing \eqref{key}
would require us to work on   the recurrence equation satisfied by $D^2 u_n$,
which is much more complicated than that for $Du_n$ and cannot be easily iterated. 
For these reasons, we restrict ourselves to the assumption that $m_4 < \infty$
and develop a discrete Malliavin-Stein bound (Theorem \ref{PSTU-new})
that is tailor-made for our setting.  

\end{remark}

Soon after the appearance of our work \cite{BZ23},
together with P. Xia, we established an almost sure central limit theorem
in \cite{BXZ23}.  In the current setting,  we can establish the same result.

\begin{corollary}[\textsf{Almost sure central limit theorem}] 
Let the assumptions in Theorem \ref{thm_main} prevail
and assume $m_4 < \infty$. Then, for any fixed $t_0\in(0,\infty)$,
$\big\{ F_R(t_0)/\s_R(t_0) : R\geq 1\}$ satisfies an almost sure central limit theorem
meaning that for any bounded continuous function $\phi:\R\to\R$, 
we have 
\begin{align}\label{ASLT}
\frac{1}{\log T} \int_1^T \frac{1}{y} \phi \big(  F_y(t_0)/\s_y(t_0)    \big) dy
\xrightarrow[almost \,surely]{T\to+\infty}  \int_\R \frac{1}{\sqrt{2\pi}} e^{-z^2/2} \phi(z) dz.
\end{align}

\end{corollary}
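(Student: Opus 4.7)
The plan is to invoke an Ibragimov--Lifshits type sufficient condition for the almost sure central limit theorem, adapted to the continuous-parameter setting exactly as in \cite{BXZ23}. The criterion requires three ingredients: (a) $X_R := F_R(t_0)/\s_R(t_0) \to \cN(0,1)$ in distribution, which is already supplied by Theorem \ref{thm_main}-(iii); (b) a uniform higher-moment bound $\sup_{R\geq 1}\bE\big[|X_R|^{2+\delta}\big] < \infty$ for some $\delta > 0$; and (c) a polynomial covariance decay
\begin{align*}
\big| \bE[X_a X_b] \big| \leq C\sqrt{a/b}, \qquad 1\leq a\leq b.
\end{align*}

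For (b), a Rosenthal / BDG-type inequality applied to the Poisson stochastic integral in \eqref{mild}, together with the moment bound \eqref{KTPs} (available thanks to $m_4<\infty$), yields $\bE\big[F_R(t_0)^4\big] \lesssim R^2$; combined with $\s_R^2(t_0) \asymp R$ from Theorem \ref{thm_main}-(ii), this gives a uniform bound on $\bE[X_R^4]$, so one may take $\delta = 2$.

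For (c), I would apply the It\^o isometry for predictable integrands to the mild formulation \eqref{mild} to obtain
\begin{align*}
\bE\big[F_a(t_0)\,F_b(t_0)\big] = m_2 \int_0^{t_0}\!\!\int_{\R} g_a(s,y)\, g_b(s,y)\, \bE\big[\s(u(s,y))^2\big]\, dy\, ds,
\end{align*}
where $g_R(s,y) := \int_{-R}^R G_{t_0-s}(x-y)\,dx$. The finite propagation of the wave kernel $G$ forces $g_a(s,y)=0$ for $|y|>a+(t_0-s)$, while $g_a, g_b \leq t_0$. Using the uniform bound \eqref{KTPs}, the integrand is then supported on a set of Lebesgue measure $O(a\,t_0)$, which yields $\bE[F_a(t_0)F_b(t_0)] \lesssim a$. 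In conjunction with $\s_a^2 \asymp a$ and $\s_b^2 \asymp b$, the claimed covariance bound follows.

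Once (a)--(c) are established, the discrete ASCLT for $(X_k)_{k\in\N}$ follows directly from the Ibragimov--Lifshits criterion, and the passage to the continuous logarithmic average in \eqref{ASLT} is a standard interpolation step using the almost-sure continuity of $R\mapsto F_R(t_0)$ and the boundedness of $\phi$ (cf.\ the analogous step in \cite{BXZ23}). The main subtle point is securing the exponent $1/2$ in the covariance bound (c); however, once predictability of $\s(u)$ and the It\^o isometry are invoked, this reduces to a straightforward area estimate enforced by the finite propagation speed, so no genuinely new technique beyond those already developed in the present paper is required.
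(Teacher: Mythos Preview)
Your overall plan matches the paper's: both defer to the Ibragimov--Lifshits--type framework of \cite{BXZ23}, and both need $\s_R(t_0)\asymp\sqrt{R}$ plus the QCLT rate from Theorem \ref{thm_main}. Ingredients (a) and (b) are fine; the Rosenthal estimate you sketch for $\bE[F_R(t_0)^4]\lesssim R^2$ is correct.

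The gap is in (c). The Ibragimov--Lifshits criterion controls the second moment of logarithmic averages of $e^{isX_y}-\bE[e^{isZ}]$, so what must be bounded is $\text{Cov}\big(e^{isX_a},e^{isX_b}\big)$ (or, in the Malliavin-friendly reformulation used in \cite{BXZ23}, covariances of Lipschitz functionals of $X_a$ and $X_b$), \emph{not} the bilinear quantity $\bE[X_aX_b]$. Your It\^o-isometry computation only delivers the latter. This is exactly why the paper's proof singles out the key estimate \eqref{key}: it feeds into the Poincar\'e inequality (Lemma \ref{2-Poincare}) to give, for any Lipschitz $\phi,\psi$,
\[
\big|\text{Cov}\big(\phi(X_a),\psi(X_b)\big)\big|
\;\lesssim\; \frac{1}{\s_a\s_b}\int_{\mathbf{Z}}\|D_\xi F_a(t_0)\|_2\,\|D_\xi F_b(t_0)\|_2\,\fm(d\xi)
\;\lesssim\; \frac{m_2}{\s_a\s_b}\int_0^{t_0}\!\!\int_\R g_a(s,y)\,g_b(s,y)\,dyds,
\]
where the last step uses $\|D_{r,y,z}F_R(t_0)\|_2\leq \mathcal{C}_{t_0,2,\s}\,|z|\,g_R(r,y)$ from \eqref{Z1}. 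The resulting integral is precisely the one you bounded by $O(a)$ via the finite-propagation area estimate, so your computation survives intact---but the object it must be attached to is the Poincar\'e bound on nonlinear covariances, not the It\^o isometry for $\bE[F_aF_b]$. Once you make that substitution, the argument closes exactly as in \cite[Section 3.1]{BXZ23}.
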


\begin{proof}  It follows from Theorem \ref{thm_main} that 
$\s_y(t_0) \asymp \sqrt{y}$ as $y\to+\infty$. Then, following exactly
the same argument as in \cite[Section 3.1]{BXZ23} and using the key estimate \eqref{key},
we can prove the   almost sure central limit theorem in \eqref{ASLT}. 
We omit the details here. 
\qedhere
\end{proof}

For more details on the almost sure central limit theorem, we refer interested 
readers to \cite{BXZ23} and references therein. 

\smallskip

The second main contribution of this article
is an  asymptotic independence result for $(F_R(t), u )$
 as $R \to \infty$. This is motivated by  the recent developments   \cite{tudor23,TZ23} 
 in  the Gaussian case that stemmed from  an observation of L. Pimentel \cite{pimentel}.
 We present a brief account of this new development in Section \ref{SEC4_2}.
Now let us properly formulate the notion of asymptotic independence.
It is known, e.g., from \cite[Section 11.3]{Dudley}
that the set of bounded Lipschitz functions on $\R^k$
is a separating class for the weak convergence of probability
measures on $\R^k$.
Let $X, Y$ be two random variables with values in  
$\R^{k_1}, \R^{k_2}$ respectively,
and defined on a common probability space $(\O, \F, \PP)$.
Then, $X, Y$ are independent if and only if 
$\E[ f_1(X) f_2(Y) ]  = \E[ f_1(X) ] \E[f_2(Y)]$ for any bounded Lipschitz
functions $f_i: \R^{k_i} \to\R$.
The above condition is equivalent to
$\E[ e^{i  (\pmb{s} \cdot X +   \pmb{t} \cdot Y)}  ]  
=\E[ e^{i  \pmb{s} \cdot X }   ] 
\E[ e^{i     \pmb{t} \cdot Y}  ]   $
for any $\pmb{s}\in\R^{k_1}$ and $\pmb{t}\in\R^{k_2}$,
where $\pmb{s}\cdot X$ denotes the usual inner product
on the corresponding Euclidean space.
Furthermore, we say the families of random variables $(X_R), (Y_R)$
in $\R^{k_1}, \R^{k_2}$ (resp.) are {\it asymptotically independent} 
as $R\to+\infty$ if 
\begin{align} \label{adp}
{\rm Cov} \big( f_1(X_R),  f_2(Y_R) \big)   \xrightarrow{R\to+\infty} 0
\end{align}
for any bounded Lipschitz functions $f_i: \R^{k_i}   \to\R$, $i=1,2$.
With \eqref{WASS_def} in mind, one can easily see that 
the above condition
\eqref{adp} holds if 
\begin{align}\label{XYhat}
d_{\rm Wass}(  (X_R, Y_R), (\wh X_R, \wh Y_R)  ) \xrightarrow{R\to+\infty} 0,
\end{align}

\noi
where $(\wh X_R, \wh Y_R)$ has the same marginals as $ (X_R, Y_R)$
with   $\wh X_R,$ $\wh Y_R$   independent. 
See 
  \cite{DN21} for more on various notions of asymptotic independence.

 Now let us state our main result in this direction
 and postpone its proof to  Section \ref{SEC5_3}.

\begin{theorem}
\label{thm3}
 \textup{(\textsf{Asymptotic independence})} 
 Recall \eqref{mp} and assume  that $m_4<\infty$.
Then,
  for any $t>0$, $d\in\N_{\geq 1}$, and $(t_1,x_1),\ldots,(t_d,x_d) \in \R_{+} \times \R$, 
  there exists a finite  constant $C > 0$ depending on $t, d, t_1,\ldots,t_d$
   such that
 \begin{align}\label{qindep}
 d_{\rm Wass}\Big(  \big(\tfrac{F_R(t)}{\s_{R}(t)},   u(t_1,x_1),\ldots,u(t_d,x_d)  \big), 
 \big(Z', u(t_1,x_1),\ldots,u(t_d,x_d)\big)  \Big)  
 \leq C R^{-\frac12},
 \end{align}
where  $Z' \sim \cN(0,1)$ is independent of $\big(u(t_1,x_1),\ldots,u(t_d,x_d)\big)$.
Moreover,  any finite-dimensional marginal of the process $\{\tfrac{F_R(t)}{\sqrt{R} }\}_{t\in\R_+}$
is asymptotically independent 
from any finite-dimensional distribution of $u$, as $R\to+\infty$.
 \end{theorem}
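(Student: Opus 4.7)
The plan is to apply the Poissonian multivariate Malliavin--Stein inequality developed in Section~\ref{SEC4_2} (the Poisson adaptation of Tudor's bound) to the centered, normalized variable $F \coloneqq F_R(t)/\s_R(t)$ together with the vector $Y \coloneqq (u(t_1,x_1),\ldots,u(t_d,x_d))$, and then to estimate the resulting ingredients by $O(R^{-1/2})$ using the key moment bound \eqref{key} together with the compact support of the wave kernel.

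First, I expect the multivariate Poissonian Malliavin--Stein bound to yield an inequality of the schematic form
\begin{align*}
d_{\rm Wass}\big((F,Y),(Z',Y)\big)
\leq d_{\rm Wass}\big(F,\cN(0,1)\big)
+ C_d\sum_{j=1}^d \E\Big[\big|\langle DF,-DL^{-1}Y_j\rangle_{\fH}\big|\Big]
+ \mathcal{R}_R,
\end{align*}
where $\mathcal{R}_R$ collects the higher-order ``jump corrections'' specific to the Poisson case, involving triple products of Malliavin derivatives weighted by $|z|^3\,\nu(dz)$ and, after Rosenthal-type estimates, at worst $|z|^4\,\nu(dz)$. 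The first summand on the right is already $O(R^{-1/2})$ by Theorem~\ref{thm_main}(iii), so the heart of the argument is the control of the cross terms and of $\mathcal{R}_R$.

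To bound the cross term I would use that Malliavin differentiation commutes with deterministic spatial integration, so $D_{r,y,z}F_R(t)=\int_{-R}^R D_{r,y,z}u(t,x)\,dx$, which by \eqref{key} satisfies $\|D_{r,y,z}F_R(t)\|_p \leq C|z|(t-r)_+$ uniformly in $R$. The factor $\|D_{r,y,z}Y_j\|_p \leq C|z|\,G_{t_j-r}(x_j-y)$ localizes $y$ to the bounded region $|y-x_j|<t_j-r$, whose Lebesgue measure is independent of $R$. Together with a Mehler-type representation of $-DL^{-1}$ and the Cauchy--Schwarz inequality, this produces a bound of the form
\[
\frac{C}{\s_R(t)}\int_{\mathbf{Z}} |z|^2 (t-r)_+\,G_{t_j-r}(x_j-y)\,\fm(dr,dy,dz) = O\big(1/\s_R(t)\big) = O(R^{-1/2}),
\]
using $m_2<\infty$ and $\s_R(t) \asymp \sqrt{R}$ from Theorem~\ref{thm_main}(ii). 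Each contribution in $\mathcal{R}_R$ likewise carries at least one factor $D_{r,y,z}u(t_{j'},x_{j'})$ whose bounded-in-$R$ spatial support confines the integration over $y$, and the extra powers of $|z|$ are absorbed by the assumption $m_4<\infty$.

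The hard part will be the bookkeeping of the Poissonian remainder $\mathcal{R}_R$: in the Gaussian setting of \cite{tudor23} only the single cross term $\langle DF,-DL^{-1}Y_j\rangle_\fH$ arises, but since $D$ is now a finite-difference operator, additional third-order contributions are inevitable, and one must verify that each of them inherits a wave-kernel localization of the type just described. Approaching this via the Picard iterates $u_n$, whose Malliavin derivatives satisfy explicit linear recurrences, and then passing to the limit as in Proposition~\ref{prop_MD}, should keep the combinatorics manageable. Finally, the quantitative estimate \eqref{qindep} implies \eqref{adp} for any bounded Lipschitz test functions, and the ``moreover'' statement follows by applying the same argument simultaneously to the vector $\big(F_R(s_1)/\sqrt{R},\ldots,F_R(s_k)/\sqrt{R}\big)$ against $\big(u(t_1,x_1),\ldots,u(t_d,x_d)\big)$, and then combining with the finite-dimensional convergence from Theorem~\ref{thm_main}(iv).
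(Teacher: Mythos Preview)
Your overall strategy is right, but you are overcomplicating the multivariate Malliavin--Stein step in a way that creates unnecessary work. The paper's bound (Theorem~\ref{stein-th2}) is not written via $-DL^{-1}Y_j$; instead it exploits the explicit Skorohod representation $X=\delta(v)$ with $v=V_{t,R}/\s_R(t)$, and the cross term is simply
\[
\sqrt{2\pi}\sum_{j=1}^d \int_{\mathbf{Z}} \|v(\xi)\,D_\xi Y_j\|_1\,\fm(d\xi),
\]
while the only other terms are exactly the two that already appear in the univariate bound \eqref{MSB2}. There is no separate ``higher-order jump remainder'' $\mathcal{R}_R$ to chase: the third-order Poisson correction is entirely absorbed in the term $\int_{\mathbf{Z}}\|v(\xi)(D_\xi X)^2\|_1\,\fm(d\xi)$, which was already controlled in the QCLT proof. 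So the detour through Picard iterates and the anticipated combinatorics of triple-product remainders is unnecessary.

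With the correct bound in hand, the cross-term estimate is precisely the computation you sketched: $\|V_{t,R}(s,y,z)\|_2 \lesssim \varphi_{t,R}(s,y)|z| \leq t|z|$, combined with $\|D_\xi u(t_j,x_j)\|_2 \lesssim |z|\,G_{t_j-s}(x_j-y)$ from \eqref{Z1}, and the localization $\int_\R G_{t_j-s}(x_j-y)\,dy = (t_j-s)_+$ gives an $R$-independent integral; dividing by $\s_R(t)\asymp\sqrt{R}$ yields the $R^{-1/2}$ rate. Note only $m_2$ is needed for this term; $m_4$ enters through the QCLT part.

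For the ``moreover'' part, your idea of applying the same argument to a linear combination of the $F_R(s_i)$'s would work (and the paper says so), but the paper actually proceeds more directly: it bounds ${\rm Cov}(f(\mathbf{X}_R),g(\mathbf{Y}))$ for Lipschitz $f,g$ using the Poincar\'e inequality (Lemma~\ref{2-Poincare}) together with the difference rule \eqref{2_LIP} and \eqref{Z1}, which gives the $R^{-1/2}$ decay without invoking Stein's method at all.
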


\begin{remark}\rm
(i) The bound  \eqref{qindep}, together with a triangle inequality in the 
Wasserstein distance and the quantitative CLT in Theorem \ref{thm_main},
indicates that relation \eqref{XYhat} holds with $X_R=F_R(t)/\s_R(t)$ and $Y_R=(u(t_1,x_1),\ldots,u(t_d,x_d))$ and therefore the spatial integral $F_R(t)/\s_R(t)$ is asymptotically independent
from any finite-dimensional margins of the random field 
$\{u(t,x)\}_{(t,x)\in\R_+\times\R}$. 
Note that in the paper \cite{TZ23}, Tudor and Zurcher only established 
their asymptotic independence result for $F_R(t) / \s_{R}(t)$ and $u(t', x')$ (i.e., 
only for one-dimensional marginals); see Section \ref{SEC5_3} for more details.

\smallskip
\noi
(ii) Let $X_R = \frac{F_R(t)}{ \s_{R}(t)}$ for a fixed $t>0$ and
$Y_R=  (u(t_1,x_1),\ldots,u(t_d,x_d) )$. Then \eqref{adp} holds, by the remark above. Taking functions $f_1 \in \{x\mapsto \cos(s \cdot x), \sin(s\cdot x)\}$ and $f_2 \in \{x\mapsto \cos(t \cdot x), \sin( t \cdot x)\}$ in \eqref{adp}, we see that condition (9) of \cite{DN21} holds. By Theorem \ref{thm_main}, $(X_R)_{R>0}$ is a tight family. Then, Proposition 6 in \cite{DN21} indicates that the families $(X_R)_{R>0}$ and $(Y_R)_{R>0}$ satisfy the asymptotic independence condition {\bf AI-1} therein, that is,
$
\E[ h( X_R, Y_R)] - \E[ h( \wh X_R, \wh Y_R)] \to 0
$
for any $h: \R \times\R^{d}\to\R$ bounded uniformly continuous,
with $(\wh X_R, \wh Y_R)$  as  in \eqref{XYhat}.

\smallskip
\noi
(iii) To the best of our knowledge, 
the regularity of the solution to stochastic wave equation \eqref{SWE} 
with L\'evy noise $\dot{L}$ has not been addressed
in the literature; see, e.g., the paper \cite{CDH19} for the regularity result
on heat equations with L\'evy noise. We regard this as a separate 
research problem. 
For this reason, we could not 
establish the asymptotic independence between 
$\tfrac{F_R(t)}{\s_{R}(t)}\in\R$  and the random field $u\in\mathcal{M}$ 
on a suitable path space $\mathcal{M}$.  

\smallskip
\noi
(iv) In Theorem \ref{thm3}, we are only able to show the asymptotic independence 
for the finite-dimensional marginals of the spatial integral process 
$\{\tfrac{F_R(t)}{\sqrt{R} }\}_{t\in\R_+}$ and we can not pass the result 
to the whole process level.

\end{remark}

We end this section by collecting frequently used notations in this paper. 

\noi
$\bul$ {\bf Notations.} $\N_{\geq 1} := \{1,2, ...\}$ denotes
the set of positive integers. With   $G_t(x)$ as in \eqref{def-G},
we define 
\begin{equation}
\label{VTR}
\varphi_{t, R}(r, y) = \int_{-R}^R G_{t-r}(x-y)dx;
\end{equation}
  Let $\cB(\bR)$ be the Borel $\s$-field of $\bR$, and $\cB_b(\bR)$ the class of bounded sets in $\cB(\bR)$.
  Let $\cB(\bR_0)$ be the Borel $\s$-field of $\bR_0$, and $\cB_b(\bR_0)$ the class of bounded sets in $\cB(\bR_0)$.
  For any $p>0$, we let $\|X\|_p =(\bE[|X|^p])^{1/p}$ be the $L^p(\O)$-norm of $X$.
  For $a_R>0$ and $b_R>0$, the notation
$a_R\asymp b_R$ means that
$
0 < \liminf_{R\to+\infty} \frac{a_R}{b_R} \leq \limsup_{R\to+\infty}  \frac{a_R}{b_R} < +\infty.
$ 
  We denote by $\s\{X\}$ the $\s$-algebra generated
by the random object $X$. 
  We let $\|h\|_{\infty}=\sup_{x\in \bR^d}|h(x)|$ be the sup-norm of a
measurable  function $h:\bR^d \to \bR$.
  We write $\xi = (r, y,z)$ to denote a generic element in $\mathbf{Z} = \R_+\times\R\times\R_0$.

\smallskip

\noi
$\bul$ {\bf Organization of the paper.} 
This rest of the paper is organized as follows. Section \ref{section-prelim} includes some preliminaries about It\^o integration and Malliavin calculus in the Poisson setting, as well as
several useful facts on predictability. In Section \ref{section-Du}, we examine the Malliavin differentiability of the solution, and we include the proof of the key property \eqref{key}.
Section \ref{section-MS} gives a version of the Malliavin-Stein bound in the Poisson setting, and its extension to the multivariate case that is motivated 
by the desire of quantifying the asymptotic independence.
Section \ref{SEC5A} begins with the proof of the stationarity of the solution, and contains the  proofs of the main theorems (Theorem \ref{thm_main} and Theorem \ref{thm3}).

\section{Preliminaries}
\label{section-prelim}

 In this section, we collect a few preliminaries on It\^o integration
 and  Malliavin calculus in the Poisson setting. 
 When we apply It\^o-type 
 inequalities (e.g., Rosenthal's inequality), 
 the integrands are often required to be predictable.
 We address the predictability issue in Section \ref{SEC2_3}.

Recall from the Introduction that 
  $N$ is a Poisson random measure on $\mathbf{Z}=\R_+\times\R\times\R_0$
  that is generated by the pure-jump L\'evy white noise $\dot{L}$ in \eqref{SWE}.
In  Section \ref{SEC2_1}, we recall a few basics on 
the Poisson random measure $N$ and the associated 
It\^o integration;  we refer the reader to \cite[Section 4.2.1]{applebaum09}
 and \cite[Section 8.7]{PZ07} for more details.
 In Section \ref{SEC2_2}, we present basics on Malliavin calculus.
 In particular, we establish a variant of Heisenberg's commutation relation
 in Lemma \ref{lem_Heis}, which is crucial for us to derive
 the integral equation for the Malliavin derivative of the solution to \eqref{SWE}.

 \subsection{Basics on It\^o integration} 
 \label{SEC2_1}

Recall that $\mathbf{Z}=\bR_{+} \times \bR \times \bR_0$ is endowed with the Borel 
$\s$-field $\mathcal{Z}$ and the measure $\fm={\rm Leb} \times \nu$. 
The Poisson random measure $N$, over which 
our space-time pure-jump L\'evy white noise $\dot{L}$  is built,
is a set-indexed family $\{ N(A): A\in \mathcal{Z}\}$ of Poisson random variables. 
More precisely, let  $\mathbf{N}_\s$ be the set of all  $\s$-finite  measures $\chi$
on $(\mathbf{Z}, \mathcal{Z})$ with $\chi(B)\in \N_{\geq 0} \cup \{+\infty\}$
for each $B\in\mathcal{Z}$. Let $\mathscr{N}_\s$ be the smallest $\s$-algebra
that makes the mapping
$\chi\in \mathbf{N}_\s \mapsto \chi(B)\in[0,\infty]$
measurable for each $B\in\mathcal{Z}$. We recall the following definition.

\begin{definition}
\label{def:PRM}
A   Poisson random measure $N$ with intensity measure $\fm$ is a 
$(\mathbf{N}_\s,\mathscr{N}_\s)$-valued random element   
defined on a complete probability space 
$(\O, \F, \mathbb{P})$ such that the following two conditions hold:

\noi
\begin{itemize}
\item for each $A\in\mathcal{Z}$, $N(A)$ has a Poisson distribution  
with mean $\fm(A)$;\footnote{If $\fm(A)=\infty$,
we set $N(A)=\infty$ almost surely.}

\item  for any mutually disjoint sets $A_1, ... , A_k\in\mathcal{Z}$  {\rm(}$k\geq 1${\rm)}, 
$N(A_1), ... , N(A_k)$ are independent.

\end{itemize}
For $A\in\mathcal{Z}$ with $\fm(A)<\infty$, we define $\wh{N}(A) = N(A) - \fm(A)$. We
call $\wh{N}$  the  compensated Poisson random measure on $\mathbf{Z}$.

\end{definition}

\noi
$\bul$ {\bf A natural filtration.} We denote by $\mathcal{F}^0_t$ the $\s$-algebra
generated by the random variables
$N([0,s]\times A \times B)$ with $s \in [0,t]$ and 
$\textup{Leb}(A)+ \nu(B) < \infty$.
Let  $\mathcal{F}_t = \s\big( \mathcal{F}_t^0 \cup \mathcal{N} \big)$ be the
$\s$-algebra generated by $\mathcal{F}_t^0$ and  the set $\mathcal{N} $ of $\PP$-null sets.
This gives us a filtration
\begin{align}
\mathbb{F}:= \{ \mathcal{F}_t: t\in\R_+\}.
\label{filtraF}
\end{align}

\smallskip
\noi
$\bul$ {\bf It\^o integral with respect to $\wh{N}$.} 
Recall that a process $V=\{V(t,x,z):t\in \bR_{+},x\in \R,z\in \R_0\}$ is called {\em elementary},
 if it is of the form
\begin{equation}
\label{elem-V}
V(\o,t,x,z)=X(\omega) \ind_{(a,b]}(t) \ind_{B}(x)\ind_{C}(z), 
\end{equation}

\noi
where $0\leq a<b$, $B\in \cB_b(\R)$, $C \in \cB_b(\R_0)$, and $X$ is a bounded $\mathcal{F}_a$-measurable random variable.
The predictable  $\s$-field  $\overline{\cP}$ 
on $\Omega \times \bR_{+} \times \bR \times \bR_0$
is  generated by linear combinations of processes 
of form \eqref{elem-V}.\footnote{All  predictable $\s$-fields in this paper
 are considered with respect to the filtration $\mathbb{F}$ in \eqref{filtraF}.
  To simplify the language, we use the term ``predictable'' 
  instead of ``$\mathbb{F}$-predictable''.}

 For  an elementary  process $V$ as in \eqref{elem-V}, 
 we define its   It\^o integral
 as follows: for any $t\in\R_+$,
\[
I_t^{\wh N}(V):=\int_0^t \int_{\R}\int_{\R_0} V(s,x,z) \wh{N}(ds,dx,dz)
=X \cdot \wh{N}((t \wedge a,t\wedge b] \times B \times C).
\] 

\noi
Recall the notation  $\fH=L^2(\mathbf{Z}, \mathcal{Z},\fm)$. 
One can easily verify the {\em isometry property} for the above integral:

\noi
\begin{align}
\label{isometry}
\bE \big[|   I_t^{\wh N}  (V)|^2\big]=\bE \big[\|V \ind_{[0,t]} \|_{\fH}^2\big],
\end{align}

\noi
which  can be extended to all predictable process $V\in L^2(\Omega;\fH)$.
Note that  the process $\{I_t^{\wh N}(V)\}_{t\geq 0}$ is a zero-mean square-integrable 
$\mathbb{F}$-martingale that  has a c\`adl\`ag modification 
(still denoted by $I_\bul^{\wh N}(V)$). 
This   c\`adl\`ag modification  has 
 the following (predictable) quadratic variation
\[
\langle I^{\wh N}(V) \rangle_{t}=\int_0^t \int_{\R} \int_{\R_0} V^2(s,x,z)  ds dx \nu(dz)
\]
and therefore,
$t\in\R_+\mapsto
 \big( I_t^{\wh N}(V) \big)^2 - \langle I^{\wh N}(V) \rangle_{t}$  is 
a (local) $\mathbb{F}$-martingale.

\smallskip

Let us record a basic property of It\^o's integral, which will be used in  
 the proof of Theorem \ref{thm_main}.

\begin{lemma} \label{Ito-int}
Let $s\in\R_+$ and $F$ be   $\F_s$-measurable.
Suppose  $V\in L^2(\Omega; \fH)$ is predictable, then  
the following equality holds almost surely:

\noi
\begin{align}
\label{Ito-int-F}
F \int_s^t \int_{\R} \int_{\R_0} V(r,x,z)\wh{N}(dr,dx,dz)
=
\int_s^t \int_{\R} \int_{\R_0} FV(r,x,z)\wh{N}(dr,dx,dz)
\end{align}
for any $t> s$. 
\end{lemma}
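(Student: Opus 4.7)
The plan is to prove the identity by the classical three-step routine for ``pulling an $\F_s$-measurable factor inside the stochastic integral'': verify \eqref{Ito-int-F} for elementary predictable $V$, extend by linearity and $L^2(\Omega;\fH)$-density to general predictable $V$, and then remove any boundedness assumption on $F$ by truncation. The essential preliminary observation is that the product $F\cdot V \cdot \ind_{(s,\cdot]}(r)$ is itself predictable: indeed $F\,\ind_{(s,\infty)}(r)$ is left-continuous in $r$, and for $r>s$ is $\F_r$-measurable because $\F_s\subset\F_r$, hence $\mathbb{F}$-predictable; multiplying by the predictable process $V$ preserves predictability. This is exactly what makes the right-hand side of \eqref{Ito-int-F} a well-defined It\^o integral, and I expect this small predictability bookkeeping, together with the need to keep the time support inside $(s,t]$ throughout the approximation, to be the only genuine obstacle.

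For the first step, assume $F$ is bounded and take an elementary $V=X\ind_{(a,b]}(r)\ind_B(x)\ind_C(z)$ as in \eqref{elem-V}. Since only $r\in(s,t]$ contributes to either side, one may replace $(a,b]$ by $(s\vee a,\,s\vee b]$ and assume $a\geq s$. Then $FX$ is bounded and $\F_a$-measurable, so $FV$ is again an elementary predictable process, and by the very definition of the It\^o integral both sides of \eqref{Ito-int-F} reduce to
\begin{equation*}
FX\cdot\widehat{N}\big((t\wedge a,\,t\wedge b]\times B\times C\big).
\end{equation*}
Linearity extends the identity to all finite linear combinations of such elementary processes.

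Still with $F$ bounded, for general predictable $V\in L^2(\Omega;\fH)$ I would approximate $V\,\ind_{(s,t]}$ in $L^2(\Omega;\fH)$ by linear combinations $V_n$ of elementary predictable processes supported in $(s,t]$. The isometry \eqref{isometry} gives $L^2(\PP)$-convergence of $\int_s^t\!\int_\R\!\int_{\R_0}V_n\,d\widehat{N}$ to $\int_s^t\!\int_\R\!\int_{\R_0}V\,d\widehat{N}$; multiplying by the bounded $F$ preserves this on the left, while $FV_n\to FV\,\ind_{(s,t]}$ in $L^2(\Omega;\fH)$ and a second use of \eqref{isometry} yields $L^2(\PP)$-convergence on the right. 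The identity for each $V_n$ therefore passes to the limit, and a subsequence gives the almost sure equality. Finally, for a general $\F_s$-measurable $F$ for which the right-hand side of \eqref{Ito-int-F} is meaningful, I would truncate $F_k=F\,\ind_{\{|F|\leq k\}}$, apply the bounded case to each $F_k$, and let $k\to\infty$: the left-hand side converges almost surely since $F_k\to F$ a.s.\ and the integral is a fixed random variable in $L^2(\PP)$, and the right-hand side is handled by dominated convergence in \eqref{isometry} when $FV\in L^2(\Omega;\fH)$ (or by a standard localization argument otherwise).
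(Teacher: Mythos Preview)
Your proof is correct and follows essentially the same approach as the paper: verify the identity for elementary $V$ and then pass to general predictable $V$ by $L^2(\Omega;\fH)$-approximation via the isometry. The only minor difference is cosmetic: where the paper handles the elementary case by splitting into three cases according to whether $s\leq a$, $s\in(a,b]$, or $s>b$, you collapse these by replacing $(a,b]$ with $(s\vee a,\,s\vee b]$ upfront and reducing to $a\geq s$; your version also spells out the truncation $F_k=F\ind_{\{|F|\leq k\}}$ to remove any boundedness assumption on $F$, which the paper leaves implicit.
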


\begin{proof}
We only prove the result when $V$ is an elementary process. 
The general case will follow by approximation.
 Let $V$ be a process of the form \eqref{elem-V}
 and   $M_t=\wh{N}([0,t] \times A \times C\big)$.
 Then,  the left-hand side of \eqref{Ito-int-F} is equal to
\begin{equation}
\label{LHS-Ito}
FX \big[  (M_{t\wedge b}-M_{t \wedge a} )-
 (M_{s\wedge b}-M_{s \wedge a}  )\big].
\end{equation}
We consider three cases: 

\smallskip

\noi
(i) If $s\leq a$, then $FV$ is clearly  an elementary process.
Then,   the right-hand side of \eqref{Ito-int-F} coincides with \eqref{LHS-Ito}. 

\smallskip

\noi
(ii) If $s\in(a,b]$, then   $t\wedge a=s\wedge a=a$ and thus, 
we can rewrite  \eqref{LHS-Ito} as 
\[
F X  (M_{t\wedge b}-M_{s\wedge b} )=FX  (M_{t\wedge b}-M_{s} ).
\]
At the same time, $(r,x,z)\mapsto  FV(r, x, z)\ind_{(s, t]}(r) = FX\ind_{(s, b]}(r)\ind_B(x) \ind_C(z)$
is an elementary process,
so that the right-hand-side of \eqref{Ito-int-F} is almost surely equal to 
\begin{align*}
\int_0^t \int_{\R} \int_{\R_0}FX \ind_{(s,b]}(r)\ind_{B}(x) \ind_{C}(z)\wh{N}(dr,dx,dz)
&=FX (M_{t\wedge b}-M_{t\wedge s}) \\
&=F X (M_{t\wedge b}-M_{s}).
\end{align*}
That is, the equality \eqref{Ito-int-F} holds true in this case.

\smallskip

\noi
(iii) If $s >b$, then \eqref{LHS-Ito} and thus the left-side of  \eqref{Ito-int-F} 
is equal to zero, and it is also clear that the  right-side of \eqref{Ito-int-F} is  equal to $0$.

\smallskip

Hence, the proof is completed.
\qedhere
\end{proof}

\noi
$\bul$ {\bf It\^o integral with respect to $L$.}
We say that a process $g=\{g(t,x):t \in \bR_{+},x \in \bR\}$ is {\em elementary} if it is of the form:
\begin{equation}
\label{elem-X}
g(\omega,t,x)=X(\omega) \ind_{(a,b]}(t) \ind_{B} (x),
\end{equation}
with $a,b,B$ and $X$ as in \eqref{elem-V}. 
That is,
\[
(\o, t, x, z)\mapsto g(\o,t, x)\ind_C(z)
\]
is an elementary process as in \eqref{elem-V} for any $C\in\cB_b(\R_0)$.
We say that  $g$ is {\em predictable} if it is measurable 
with respect to the predictable $\s$-field $\widetilde{\cP}$ 
on $\Omega \times \bR_{+} \times \bR$, i.e., the $\s$-field generated by 
linear combinations of processes of form \eqref{elem-X}. 
From the above discussion on predictability of random fields indexed by $\R_+\times\R$, 
we can easily define the It\^o integral with respect to the pure-jump L\'evy white noise by relating 
it to the It\^o integral with respect to $\wh{N}$:

\noi
\begin{align} \label{ito4L}
\int_{\R_+} \int_\R  g(s,x) L(ds,dx) := \int_{\R_+} \int_{\R} \int_{\bR_0} g(s,x)z \wh{N}(ds,dx,dz)
\end{align}
for any  predictable random field  $g$ satisfying 
\[
\E \bigg[ \int_{\R_+\times\R} | g(s, x) |^2 dsdx \bigg] < \infty.
\]

\noi
From \eqref{isometry} and   \eqref{m2}, it follows that the above integral (with respect to $L$)
satisfies a modified isometry property:
 
\noi
\begin{align}\label{m_iso}
\bE\Big[ \big|\int_{\R_+\times \R}g(s,x)L(ds,dx) \big|^2\Big] 
= m_2\cdot  \bE \Big[\int_{\R_+\times\R} |g(s,x)|^2 \, dsdx \Big].
\end{align}

\begin{remark}\label{REM23}
\rm
(i) The It\^o integral \eqref{ito4L} 
with respect to $L$ coincides with the stochastic integral with respect to the  
{\em martingale measure} $\{L_t(B): t \in\R_+,B\in \cB_b(\R)\}$
with $L_t(B)=L([0,t] \times B)$.
We refer interested readers to \cite{walsh86}
 for stochastic integration with respect to martingale measures.

\smallskip

\noi
(ii) For any $B \in \cB_b(\R)$, $\{L_t(B)\}_{t\geq 0}$ is  an $\mathbb{F}$-martingale and a zero-mean L\'evy process with L\'evy measure $\nu$ and variance
$
{\rm Var} (L_t(B)   )  =m_2 t \, {\rm Leb}(B).
$

\smallskip

\noi
(iii) The martingale measure $\{L_t(B): t\in\R_+, B\in \cB_b(\R)\}$
  is {\em spatially homogeneous}  in Dalang's sense (see, e.g., \cite{dalang99}).
  This is shown  in  Lemma \ref{spat-homog} below.

\end{remark}

\subsection{Basic Malliavin calculus} \label{SEC2_2}

In this section, we recall some basic facts about Malliavin calculus in the Poisson setting. 
 We refer the readers to \cite{PSTU10, Last16,LPS16,LP18, NN18} for more details.

\smallskip

Let us begin with the well-known Wiener-Poisson-It\^o chaos decomposition
that asserts that the space $L^2(\O,\s\{N\},\mathbb{P})$ can be 
decomposed into a direct sum of mutually orthogonal subspaces (called 
Poisson chaoses); see, e.g., \cite[Section 18.4]{LP18}
and \cite[Section 9.9]{NN18}.

\smallskip

\noi
$\bullet$ {\bf Poisson chaos expansion.}
Any random variable $F\in L^2(\O,\s\{N\},\mathbb{P})$ can be written as:
\begin{align}
 F = \E[F] + \sum_{n=1}^\infty I_n( f_n),
 \label{CD3}
 \end{align}
 where $f_n\in \fH^{\odot n}$, $n\in\N_{\geq 1}$, are uniquely determined by $F$
 up to a $\fm$-null set.
 Here, $\fH^{\odot n}$ is the set of symmetric elements in $\fH^{\otimes n}$.
Let $J_n(\bul)$ denote the projection operator from $L^2(\O,\s\{N\},\mathbb{P})$ 
onto the $n$-th Poisson-Wiener chaos $\C_n=\{ I_n(f): f\in\fH^{\odot n}\}$, 
i.e., for $F$ as in \eqref{CD3},

\noi
 \begin{align}\label{def_JK}
 J_0 F= \E[ F] \quad{\rm and}\quad J_n F =  I_n( f_n), \,\,\, n\in\N_{\geq 1}.
 \end{align}

\noi
As in the Gaussian case (\cite[Proposition 2.7.5]{blue}), 
the Poisson chaoses are mutually orthogonal with 

\noi
\begin{align}\label{orth_rel}
\E[ I_n(f) I_m(g)] = n! \ind_{\{n=m\}} \langle f, g \rangle_{\fH^{\otimes n}}
\end{align}
for any  $f\in\fH^{\odot n}$ and $g\in\fH^{\odot m}$ with $m,n\in\N_{\geq 1}$.
As a consequence, we have, with $F$ as in \eqref{CD3}, that 
 \begin{align}
{\rm Var}(F) = \sum_{n\geq 1} {\rm Var}( J_n F) =
  \sum_{n=1}^\infty n! \| f_n \|^2_{\fH^{\otimes n}} < \infty.
 \label{CD4}
 \end{align}

\smallskip

\noi
$\bul$ {\bf Malliavin derivative.} Let $\dom(D)$ denote  the set of random variables
$F$ as in \eqref{CD3} with the  symmetric kernels $\{f_n\}_n$ satisfying
\begin{align}
\sum_{n=1}^\infty n! n \| f_n\|^2_{\fH^{\otimes n}} < \infty.
\notag 
\end{align}

\noi
For such a random variable $F\in\dom(D)$, we define the {\em Malliavin derivative}
$DF$ of $F$ to be a $\fH$-valued random variable  given by
\begin{align} \notag 
D_\xi F = \sum_{n=1}^\infty D_\xi J_n F    
= \sum_{n=1}^\infty n I_{n-1}(f_n(\xi, \bul) ), \,\, \, \xi\in \mathbf{Z},
\end{align}

\noi
where for fixed $\xi\in \mathbf{Z}$, $f_n(\xi, \bul)\in\fH^{\odot (n-1)}$. 
In view of \eqref{orth_rel} and Fubini's theorem, 
we can easily derive that for $k\in\N_{\geq 1}$,

\noi
\begin{align} \label{lem_XY}
\begin{aligned} 
\E\big[ \|D_\xi J_k F \|^2_\fH \big]
&=   k^2 (k-1)!  \int_{\mathbf{Z}} \|f_k(\xi, \bul)\|^2_{\fH^{\otimes (k-1)}}\fm(d\xi) \\
&= k k! \| f_k\|^2_{\fH^{\otimes k}} = k   \E\big[ \| J_k F \|^2_2  \big].  
\end{aligned}
\end{align}

\noi
As a result, we have 

\noi
\begin{align}  \label{lem_XY2}
\E\big[ \| DF \|_{\fH}^2  \big] 
 &=  \sum_{n=1}^\infty  \E\big[ \| DJ_n F \|_{\fH}^2  \big]  =  \sum_{n=1}^\infty  n \E\big[ \| J_n F \|^2_2  \big]  
= \sum_{n=1}^\infty n n!  \| f_n\|^2_{\fH^{\otimes n}} <\infty.
\end{align}
 
 \noi
Comparing \eqref{lem_XY2} with \eqref{CD4} yields
the following Poincar\'e inequality: 
\begin{align}
{\rm Var}(F) \leq \E\big[ \| DF \|_{\fH}^2  \big]
\label{Poi1}
\end{align}
for any $F\in\dom(D)$,
with equality when and only when $F - \E[F] \in \C_1$.

\smallskip

As we only study distributional properties,
we can assume that the Poisson
random measure $N$ (from Definition \ref{def:PRM}) is a 
proper simple point process of the form
\[
N = \sum_{n=1}^\kappa \dl_{Z_n},
\]
where $\{Z_n\}_{n\geq 1}$ are independent random variables with values in $ \mathbf{Z}$,  
$\kappa$ is a
random variable with values in $\N_{\geq 1}\cup\{+\infty\}$,
and $\dl_\xi$ is the Dirac mass at $\xi \in  \mathbf{Z}$. With probability one, 
these points are distinct (since $\fm$ 
is diffusive); see, e.g.,  \cite[Corollary 3.7]{LP18}.
For any real $\s\{N\}$-measurable random variable $F$, by Doob-Dynkin lemma, 
we can write $F = \ff(N)$ for some $\scrN_\s$-measurable function $\ff: \bfN_\s\to \R$. We define 
the {\em add-one cost operator} by

\noi
\begin{align}\label{addcost}
D^+_\xi F :=  \ff(N+\dl_\xi) - \ff(N).
\end{align}
 
The following lemma asserts  that the add-one cost operator coincides 
with the Malliavin derivative on $\dom(D)$.
For the proof of this result, we refer to \cite[Theorem 3]{Last16} for part (i)
and refer to \cite[Lemma 3.1]{PT13} for part (ii).
 
\begin{lemma}
\label{rem_BZ-ii}
{\rm (i)}  If $F\in\dom(D)$, then $D^+F = DF$.

\smallskip
\noi
{\rm (ii)} If $\E [\| D^+_\bul F\|_{\fH}^2 ] < \infty$, then 
$F\in\dom (D)$ and $DF = D^+F$.
\end{lemma}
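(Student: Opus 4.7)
The plan is to reduce both statements to a single combinatorial identity for multiple Poisson-It\^o integrals, namely
\[
D^{+}_{\xi} I_n(f) = n \, I_{n-1}\big( f(\xi,\bul) \big)
\]
for every $f\in\fH^{\odot n}$, valid for $\fm$-almost every $\xi \in \mathbf{Z}$ and almost surely. Once this is in hand, both parts reduce to orthogonality computations in the chaos expansion \eqref{CD3}--\eqref{CD4}.

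First, I would establish the above identity on a dense family of kernels. Take pairwise disjoint sets $A_1, \ldots, A_n \in \mathcal{Z}$ with $\fm(A_i) < \infty$ and let $f$ be the symmetrization of $\ind_{A_1} \otimes \cdots \otimes \ind_{A_n}$; then $n! \, I_n(f) = \prod_{j=1}^n \wh{N}(A_j)$. Using the representation $N = \sum_{k=1}^{\kappa} \dl_{Z_k}$, adding an atom $\dl_\xi$ shifts exactly the factor indexed by the unique $j$ with $\xi \in A_j$ (if any), and a direct computation together with the convention $\wh{N}(A_j)(N+\dl_\xi) = \wh{N}(A_j)(N) + \ind_{A_j}(\xi)$ yields $D^{+}_\xi I_n(f) = n I_{n-1}(f(\xi,\bul))$ pointwise for $\fm$-a.e.\ $\xi$. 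Linearity and an $L^2$-density argument (using the Poisson isometry \eqref{orth_rel} on both sides) extend this to arbitrary $f \in \fH^{\odot n}$.

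For part (i), given $F = \E[F] + \sum_{n\geq 1} I_n(f_n) \in \dom(D)$, the summability $\sum_{n} n\cdot n! \, \|f_n\|_{\fH^{\otimes n}}^2 < \infty$ together with \eqref{lem_XY} ensures that $\sum_n D^{+} I_n(f_n)$ converges in $L^2(\O \times \mathbf{Z}, \bP \otimes \fm)$ to $DF$. By the Doob-Dynkin representation $F = \ff(N)$ and dominated convergence applied to the $L^2$-limit of partial sums (which equals $\ff(N+\dl_\xi) - \ff(N) = D^{+}_\xi F$ up to an $\fm$-null set of $\xi$), we conclude $D^{+}F = DF$ in $L^2(\O \times \mathbf{Z})$.

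For part (ii), I would argue by identifying the chaos kernels of $\xi \mapsto D^{+}_\xi F$. Given $\E[\|D^{+} F\|_\fH^2] < \infty$, apply the $L^2$-chaos decomposition on $\sigma\{N\}$ to each $D^{+}_\xi F$, use Fubini and the identity from Step 1 on truncated partial sums $F_M := \E[F] + \sum_{n=1}^M I_n(f_n)$, and pass to the limit using the orthogonality relation \eqref{orth_rel} to obtain
\[
\E\big[\|D^{+} F\|_\fH^2\big] = \sum_{n=1}^\infty n \cdot n! \, \|f_n\|_{\fH^{\otimes n}}^2.
\]
The finiteness of the left-hand side is precisely the defining condition for $\dom(D)$, so $F \in \dom(D)$ and part (i) then yields $D^{+} F = D F$. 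The main obstacle is the measurability and $\fm$-a.e.\ well-definedness of $\xi \mapsto \ff(N+\dl_\xi)$ used in \eqref{addcost}; this is handled by working on the canonical proper point process realization recalled just before \eqref{addcost}, where $\dl_\xi$ can be appended in a jointly measurable fashion for all $\xi$ off an $\fm$-null set.
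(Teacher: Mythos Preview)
The paper does not give a self-contained proof of this lemma; it simply refers to \cite[Theorem~3]{Last16} for part~(i) and \cite[Lemma~3.1]{PT13} for part~(ii). Your outline---reduce everything to the identity $D^{+}_\xi I_n(f)=n\,I_{n-1}(f(\xi,\bul))$ on simple kernels and then pass to $L^2$-limits through the chaos expansion---is precisely the strategy used in those references, so in that sense your approach matches the literature the paper defers to.

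There is, however, a genuine gap in your limiting steps. You need $D^{+}_\xi F_M \to D^{+}_\xi F$ when $F_M\to F$ only in $L^2(\O)$, both in the density extension from simple $f$ to general $f\in\fH^{\odot n}$ and in passing from partial chaos sums to $F$. The map $\ff\mapsto \ff(N+\dl_\xi)$ is not continuous on $L^2(\O)$, so $L^2$-convergence of $F_M$ gives no a~priori control on $\ff_M(N+\dl_\xi)$, and your appeal to ``dominated convergence'' has no dominating function in sight. The cited references close this gap via the Mecke equation (equivalently, the Last--Penrose formula $f_n=\tfrac{1}{n!}\E[D^{+}_{\xi_1}\cdots D^{+}_{\xi_n}F]$), which lets one identify the chaos kernels of $D^{+}_\xi F$ by pairing against test multiple integrals $I_m(h)$ rather than by taking pathwise limits. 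Once you insert Mecke at that point, your sketch becomes the standard proof. (Minor aside: with the normalization $\E[I_n(f)^2]=n!\,\|f\|^2_{\fH^{\otimes n}}$ used in the paper, one has $I_n(f)=\prod_j \wh N(A_j)$ for disjoint $A_j$, not $n!\,I_n(f)$.)
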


In the Poisson case, the Malliavin derivative satisfies a ``{\it difference rule}'', instead of the chain rule encountered in the Gaussian case (e.g., \cite[Proposition 2.3.8]{blue}). 
This difference rule is stated in the following.

\begin{lemma} \label{chain}
Let $F_1,\ldots,F_d \in \dom(D)$ and $\phi:\bR^d \to \bR$ be a Lipschitz  function.
Then $\phi(F_1,\ldots,F_d) \in \dom(D)$ and

\noi
\begin{align} \notag 
D_\xi \phi(F_1,\ldots,F_d)=\phi(F_1+D_\xi F_1,\ldots, F_d+D_\xi F_d)-\phi(F_1,\ldots,F_d)
\end{align}
for $\fm$-almost every $\xi\in\mathbf{Z}$.

\end{lemma}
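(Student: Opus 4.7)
The plan is to reduce the difference rule to an elementary algebraic identity for the add-one cost operator and then transfer it to the Malliavin derivative using the two-sided characterization supplied by Lemma \ref{rem_BZ-ii}.

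First, since each $F_i$ is $\s\{N\}$-measurable I would invoke Doob--Dynkin to pick representatives $F_i = \ff_i(N)$ with $\ff_i : \bfN_\s \to \R$ measurable. Because $F_i \in \dom(D)$, Lemma \ref{rem_BZ-ii}(i) yields the pointwise identity $D_\xi F_i = \ff_i(N + \dl_\xi) - \ff_i(N)$ for $\fm$-a.e.\ $\xi$; taking a finite union, I may assume the exceptional null set is common to all $i = 1,\dots,d$.

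Second, setting $G := \phi(F_1,\dots,F_d) = (\phi \circ (\ff_1,\dots,\ff_d))(N)$, the very definition \eqref{addcost} of the add-one cost operator gives
\[
D^+_\xi G \;=\; \phi\bigl(\ff_1(N+\dl_\xi),\dots,\ff_d(N+\dl_\xi)\bigr) \;-\; \phi\bigl(\ff_1(N),\dots,\ff_d(N)\bigr),
\]
and substituting the identity from the previous step rewrites this as $\phi(F_1+D_\xi F_1,\dots,F_d+D_\xi F_d)-\phi(F_1,\dots,F_d)$ for $\fm$-a.e.\ $\xi$, which is precisely the right-hand side asserted in the lemma.

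Third, to upgrade $D^+ G$ to $DG$ via Lemma \ref{rem_BZ-ii}(ii), I need the integrability $\E[\|D^+_\bul G\|_\fH^2] < \infty$. The Lipschitz hypothesis on $\phi$ gives the pointwise bound
\[
|D^+_\xi G| \;\leq\; \Lip(\phi)\,\Bigl(\sum_{i=1}^d |D_\xi F_i|^2\Bigr)^{1/2},
\]
so that $\E[\|D^+_\bul G\|_\fH^2] \leq \Lip(\phi)^2 \sum_{i=1}^d \E[\|DF_i\|_\fH^2]$, which is finite by $F_i \in \dom(D)$ and \eqref{lem_XY2}. Lemma \ref{rem_BZ-ii}(ii) then delivers $G \in \dom(D)$ together with $DG = D^+ G$, closing the argument.

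I do not foresee a genuine obstacle: the whole proof is the tautological chain $(\phi\circ\vec{\ff})(N+\dl_\xi)-(\phi\circ\vec{\ff})(N) = \phi(\vec{\ff}(N+\dl_\xi))-\phi(\vec{\ff}(N))$ combined with the Lipschitz bound for integrability. The only mildly delicate point is measurability bookkeeping---selecting a single $\fm$-null set outside which all $d$ representations $D_\xi F_i = D^+_\xi F_i$ hold simultaneously---and this is immediate because $d$ is finite.
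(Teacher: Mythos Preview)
Your proposal is correct and follows essentially the same approach as the paper: compute $D^+_\xi \phi(F_1,\dots,F_d)$ directly from the add-one cost definition via Doob--Dynkin representatives, rewrite it using $D_\xi F_i = D^+_\xi F_i$ from Lemma~\ref{rem_BZ-ii}(i), then apply the Lipschitz bound to verify the $L^2(\Omega;\fH)$-integrability needed for Lemma~\ref{rem_BZ-ii}(ii). The paper's argument is line-for-line the same, including the inequality $|D^+_\xi \phi(F_1,\dots,F_d)|^2 \leq \Lip^2(\phi)\sum_{j}|D_\xi F_j|^2$.
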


\begin{proof}  
We write $F_i=\ff_i(N)$ for some $\scrN_\s$-measurable function $\ff_i$,
 $i=1,\ldots,d$. Then, we can write
 $\phi(F_1,\ldots,F_d)= \phi \circ (\ff_1,\ldots,\ff_d)(N)$, and  then
 we can deduce from \eqref{addcost} and Lemma \ref{rem_BZ-ii} that 
 
 \noi
\begin{align*}
D_{\xi}^+\phi(F_1,\ldots,F_d)
&= \phi \circ (\ff_1,\ldots,\ff_d)(N+\delta_{\xi})
   -  \phi \circ (\ff_1,\ldots,\ff_d)(N)\\
&=\phi\big(\ff_1(N+\delta_{\xi}), \ldots,\ff_d(N+\delta_{\xi})\big)-\phi\big(\ff_1(N),\ldots,\ff_d(N) \big)\\
&=\phi\big(F_1+D_{\xi}F_1,\ldots, F_d+D_{\xi}F_d\big)-\phi(F_1,\ldots, F_d).
\end{align*}

\noi
Using the Lipschitz property of $\phi$, we have
\begin{align}\label{2_LIP}
|D_{\xi}^+\phi(F_1,\ldots,F_d)|^2
 \leq 
\textup{Lip}^2(\phi)\sum_{j=1}^{d} |D_{\xi}F_j|^2,
\end{align}
 and thus,
 
 \noi
\begin{align} \notag 
\bE \left[\int_{\mathbf{Z}}|D_{\xi}^+\phi(F_1,\ldots,F_d)|^2\fm(d\xi)\right] 
\leq \textup{Lip}^2(\phi)\sum_{j=1}^{d} \bE[\|DF_j\|_{\fH}^2]<\infty.
\end{align}

\noi
By Lemma \ref{rem_BZ-ii}-(ii), $\phi(F_1,\ldots,F_d) \in \dom(D)$ and
$D^{+}_\bul \phi(F_1,\ldots,F_d)=D_\bul \phi(F_1,\ldots,F_d)$.
 \qedhere

\end{proof}


\begin{remark}
 \label{rem_BZ-iii}
 \rm
If $F  \in\dom(D)$ and $\phi: \R\to\R$ is Lipschitz, then Lemma \ref{chain} implies that
 $\phi(F)\in\dom(D)$ with
 
 \noi
\begin{align}   \notag 
D_\xi \phi(F)=\phi(F+D_\xi F)-\phi(F) 
\end{align}

\noi
and 
\begin{align}\label{add1b}
|D_\xi \phi(F)| \leq  \Lip(\phi)| D_\xi F|.
\end{align}
Combining this inequality with \eqref{Poi1}, we obtain a generalization of \eqref{Poi1}:
\begin{align}   \notag 
\Var  ( \phi(F)  ) \leq  \bE[\|D \phi(F)\|_{\fH}^2]\leq \textup{Lip}^2(\phi) \E [ \| DF \|_\fH^2  ].
\end{align}
We refer  interested readers to \cite[Remark 2.7]{BZ23} for more details.



\end{remark}

We also record below a useful fact. 

\begin{lemma}

\label{rem_BZ-iv}
Let $F\in\dom(D)$ with the chaos expansion \eqref{CD3} with $f_n \in \fH^{\odot n}$.
 Then $F$ is $\mathcal{F}_t$-measurable if and only if for any $n\geq 1$, 
 \begin{equation}
 \label{fn-zero}
 f_n( t_1, x_1, z_1, ..., t_n, x_n, z_n) = 0 \quad\text{if $t_j > t$ for some $j=1,\ldots,n$.}
 \end{equation}
In this case,  $D_\xi F$ is $\mathcal{F}_t$-measurable for any $\xi \in \mathbf{Z} $, and

\noi
\begin{align} \label{D-zero}
D_{r,x,z}F=0  \,\, \mbox{almost surely} \,\, 
\mbox{for $\fm$-almost all $(r,x,z) \in (t,\infty) \times \R \times \R_0$}.
\end{align}

\end{lemma}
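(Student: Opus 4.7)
The equivalence and its consequences will be read off the Poisson chaos decomposition. The key input is the conditional-expectation identity
\[
\E\bigl[I_n(f_n)\mid \cF_t\bigr]=I_n\bigl(f_n\,\ind_{\{\max_{1\leq j\leq n} t_j\leq t\}}\bigr),\qquad n\geq 1,
\]
for symmetric kernels $f_n\in\fH^{\odot n}$, which is the Poisson analogue of its Gaussian counterpart and can be found in, e.g., \cite{LP18}. I would verify it by hand on elementary kernels supported on products of pairwise disjoint space-time-mark rectangles, using independence of $\widehat N$ on disjoint sets and $\E[\widehat N(A)]=0$ for $A\subset(t,\infty)\times\R\times\R_0$, and then extend by $L^2$-density. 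The only nontrivial step in the whole proof is this identity; once it is in place, everything else amounts to bookkeeping with the chaos expansion.

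\textbf{Equivalence.} For $(\Leftarrow)$, if \eqref{fn-zero} holds then each $I_n(f_n)$ is an $L^2(\PP)$-limit of elementary symmetric multiple integrals of $\widehat N$ built from rectangles contained in $[0,t]\times\R\times\R_0$, hence is $\cF_t^0$-measurable and thus $\cF_t$-measurable; the sum $F=\E[F]+\sum_{n\geq 1}I_n(f_n)$ then inherits $\cF_t$-measurability as an $L^2$-limit of $\cF_t$-measurable random variables. For $(\Rightarrow)$, the hypothesis yields $F=\E[F\mid\cF_t]$; applying the conditional-expectation identity termwise to the chaos expansion gives
\[
F=\E[F]+\sum_{n\geq 1}I_n\bigl(f_n\,\ind_{\{\max_j t_j\leq t\}}\bigr),
\]
and uniqueness of the Poisson chaos expansion forces the almost-everywhere identity $f_n=f_n\,\ind_{\{\max_j t_j\leq t\}}$ in $\fH^{\otimes n}$ for every $n\geq 1$, which is \eqref{fn-zero} modulo $\fm^{\otimes n}$-null sets.

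\textbf{Conclusions for $DF$.} These follow from the explicit formula $D_{\xi}F=\sum_{n\geq 1}n\,I_{n-1}(f_n(\xi,\cdot))$ recalled just before \eqref{lem_XY}. By Fubini, \eqref{fn-zero} yields $f_n(\xi,\cdot)=0$ in $\fH^{\otimes(n-1)}$ for every $n$ and for $\fm$-a.e.\ $\xi=(r,x,z)$ with $r>t$, so $D_\xi F=0$ almost surely, establishing \eqref{D-zero}. For a general $\xi\in\mathbf{Z}$, the reduced kernel $f_n(\xi,\cdot)$ still vanishes as soon as any of its remaining $n-1$ time coordinates exceeds $t$, so the $(\Leftarrow)$ direction applied to the $(n-1)$-th chaos yields the $\cF_t$-measurability of $I_{n-1}(f_n(\xi,\cdot))$; summing and passing to the $L^2$-limit preserves $\cF_t$-measurability, so $D_\xi F$ is $\cF_t$-measurable as claimed.
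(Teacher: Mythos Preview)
Your proof is correct and follows essentially the same approach as the paper: both rely on the conditional-expectation identity $\E[I_n(f_n)\mid\cF_t]=I_n(f_n\ind_{[0,t]^n})$ (which the paper cites from \cite[Lemma 2.5-(i)]{BZ23}) together with uniqueness of the chaos expansion for the equivalence, and both deduce the $\cF_t$-measurability of $D_\xi F$ from the support property of the reduced kernels $f_n(\xi,\cdot)$. Your argument for \eqref{D-zero} via Fubini is a slight elaboration of what the paper simply cites from \cite[Lemma 2.5-(ii)]{BZ23}, but the underlying idea is identical.
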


\begin{proof}
Clearly, $F$ is $\mathcal{F}_t$-measurable if and only if $F=\E[F|\mathcal{F}_t]$. 
By \cite[Lemma 2.5-(i)]{BZ23}, $\E[I_n(f_n)|\mathcal{F}_t]=I_n(f_n^t)$, where
\[
f_n^t(\xi_1,\ldots,\xi_n)=f_n(\xi_1,\ldots,\xi_n)\ind_{[0,t]^n}(t_1,\ldots,t_n)
 \,\, \mbox{with $\xi_i=(t_i,x_i,z_i)$}.
\]
Therefore, $F$ is $\mathcal{F}_t$-measurable if and only if 
$\sum_{n\geq 1}I_n(f_n)=\sum_{n\geq 1}I_n(f_n^t)$, 
which is equivalent to saying that $f_n=f_n^t$ for all $n\geq 1$, by the uniqueness of the chaos expansion. Note that $f_n=f_n^t$ is equivalent to \eqref{fn-zero}, due to the symmetry of $f_n$.

Next, we  prove that $D_{\xi}F$ is $\mathcal{F}_t$-measurable.
 Recall first  that $D_\xi F = \sum_{n\geq 1} n I_{n-1} ( f_n(\xi, \bul))$.
By the construction of multiple integrals, one can approximate the kernel $f_n(\xi, \bul)$ by 
simple functions, i.e., linear combination of indicators of the form 
$\ind_{A_j\times B_j\times C_j}$
with $A_j\subset [0,t]$, $B_j\times C_j\subset \R\times\R_0$, so that 
$I_{n-1} ( f_n(\xi, \bul)   )$ is the $L^2(\O)$-limit of multilinear polynomials 
in $\wh{N}(A_j\times B_j\times C_j)$, which are clearly $\mathcal{F}_t$-measurable;
see   (2.12), (2.13), and (2.16) in \cite{BZ23}. 

Finally, relation \eqref{D-zero} was proved in \cite[Lemma 2.5-(ii)]{BZ23}.
\end{proof}

In the following, we recall from \cite[Proposition 2.10]{BZ23}
the following result, which is a consequence of the Rosenthal's inequality.
 
 \begin{proposition}\label{prop_Rose}
 Let $\{\Phi(s,y)\}_{ (s, y) \in \R_+ \times \R}$ be a predictable process such that

\noi
\begin{align}  \notag
\bE \bigg[ \int_0^t \int_{\R}G_{t-s}^2(x-y)|\Phi(s,y)|^2 dyds \bigg] <\infty,
\end{align}

\noi
where $G$ is given as in \eqref{def-G}.
 Suppose that the condition \eqref{mp} holds  for some finite $p\geq 2$. Then,

\noi
\begin{align}
\begin{aligned}
&\bE \bigg[   \Big| \int_0^t \int_{\R}G_{t-s}(x-y)\Phi(s,y)L(ds,dy) \Big|^p \bigg]  \\
& \qquad\quad
\leq C_{p}(t) \int_0^t \int_{\R}G_{t-s}^p(x-y)\bE \big[  |\Phi(s,y)|^p \big] dsdy,
\end{aligned}
 \notag
\end{align}
where $C_{p}(t)=2^{p-1}B_p^p\big( m_2^{\frac p2} t^{p-2} + m_p \big)$
with  $B_p$   the constant in the  Rosenthal's inequality.
 \end{proposition}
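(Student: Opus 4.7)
The plan is to reduce the stochastic integral against $L$ to a compensated Poisson integral and then apply a Rosenthal-type $L^p$-inequality. By definition~\eqref{ito4L},
\[
\int_0^t\!\!\int_{\R} G_{t-s}(x-y)\Phi(s,y)L(ds,dy)
= \int_0^t\!\!\int_{\R}\!\!\int_{\R_0} V(s,y,z)\,\wh N(ds,dy,dz),
\]
where $V(s,y,z) := G_{t-s}(x-y)\Phi(s,y)z$ is predictable with respect to the filtration $\mathbb{F}$. The classical Rosenthal-type (Bichteler--Jacod--M\'emin) inequality for compensated Poisson integrals of a predictable integrand then gives, for $p\geq 2$ and with Rosenthal constant $B_p$,
\[
\bigl\|I_t^{\wh N}(V)\bigr\|_p \leq B_p\Bigl( \bigl\|\|V\|_{L^2(\mathfrak m)}\bigr\|_p + \bigl\|\|V\|_{L^p(\mathfrak m)}\bigr\|_p \Bigr).
\]
This, combined with the elementary inequality $(a+b)^p\leq 2^{p-1}(a^p+b^p)$, is where the prefactor $2^{p-1}B_p^p$ will come from.

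Next I would evaluate the two terms on the right explicitly, exploiting the product structure of $V$ in the $z$ variable. The $z$-integrals factor out through $\int_{\R_0}z^2\nu(dz)=m_2$ and $\int_{\R_0}|z|^p\nu(dz)=m_p$, giving
\[
\|V\|_{L^2(\mathfrak m)}^2 = m_2\!\int_0^t\!\!\int_\R G_{t-s}^2(x-y)\Phi^2(s,y)\,dyds,\qquad
\|V\|_{L^p(\mathfrak m)}^p = m_p\!\int_0^t\!\!\int_\R G_{t-s}^p(x-y)|\Phi(s,y)|^p\,dyds.
\]
The $L^p(\mathfrak m)$-term is immediately in the desired form after taking expectation (by Fubini). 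The real work is upgrading the $L^2(\mathfrak m)$-term to an $L^p$-bound on $\Phi$, where I would use the key observation that $G_{t-s}(x-y)=\tfrac12\ind_{\{|x-y|<t-s\}}$ is supported on a set of planar Lebesgue measure $\int_0^t 2(t-s)ds = t^2$.

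Concretely, applying H\"older's inequality with conjugate exponents $p/2$ and $p/(p-2)$ on the support of $G_{t-s}(x-y)$ yields
\[
\Bigl(\int_0^t\!\!\int_\R G_{t-s}^2\Phi^2\,dyds\Bigr)^{p/2}
\leq t^{p-2}\int_0^t\!\!\int_\R G_{t-s}^p(x-y)|\Phi(s,y)|^p\,dyds.
\]
Taking expectations and inserting the two bounds into the Rosenthal estimate, and using $(a+b)^p\leq 2^{p-1}(a^p+b^p)$ to merge the two resulting $L^p$-integrands, produces
\[
\bE\bigl[|I_t^{\wh N}(V)|^p\bigr] \leq 2^{p-1}B_p^p\bigl(m_2^{p/2}t^{p-2}+m_p\bigr)\int_0^t\!\!\int_\R G_{t-s}^p(x-y)\bE\bigl[|\Phi(s,y)|^p\bigr]\,dsdy,
\]
which is exactly the stated bound with $C_p(t)=2^{p-1}B_p^p(m_2^{p/2}t^{p-2}+m_p)$.

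The only delicate point is the H\"older step: one must recognise that a naive bound on the $L^2(\mathfrak m)$-norm (for instance, trying to directly dominate $G_{t-s}^2$ by $G_{t-s}^p$) does not work for general $p\neq 2$, and that the correct move is to use the finite planar support of $G_{t-s}(x-y)$ on $(s,y)\in [0,t]\times\R$ to produce the factor $t^{p-2}$. Everything else is routine: invoking the Rosenthal inequality for predictable integrands against $\wh N$, the Fubini-type factorisation of the $z$-integral, and the triangle-inequality-style combination of the two Rosenthal terms.
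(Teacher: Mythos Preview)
Your proof is correct and follows precisely the approach the paper intends: the paper does not give its own proof of this proposition but recalls it from \cite[Proposition 2.10]{BZ23}, describing it simply as ``a consequence of the Rosenthal's inequality.'' Your argument --- reduce to the compensated Poisson integral, apply the Rosenthal/Bichteler--Jacod inequality, factor the $z$-integrals, and use H\"older on the finite support of $G$ to convert the $L^2$-term into an $L^p$-term with the $t^{p-2}$ factor --- is exactly the derivation behind that reference, and your constant tracking matches $C_p(t)$ on the nose.
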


\smallskip

\noi
$\bul$ {\bf Kabanov-Skorohod integral $\dl$.} This is an adjoint operator of $D$,
characterized by the following duality relation:

\noi
\begin{align}
\E[ \langle DF, V \rangle_\fH  ] = \E[ F \dl(V) ] 
\label{dualR}
\end{align}

\noi 
for any $F\in\dom(D)$. In other words, the domain $\dom(\dl)$
is the set of random vectors $V\in L^2(\O;\fH)$ satisfying the following
property:
there is some finite constant $C= C(V) >0$ such that 
for any $F\in\dom(D)$, $| \E\jb{DF, V}_\fH  | \leq C \| F\|_2$.
It is clear that the   relation \eqref{dualR} holds for any 
$F\in\dom(D)$ and for any $V\in\dom(\dl)$.
It is also clear that   $\dom(\dl)$ is  dense in $L^2(\O; \fH)$.  
See also Lemma \ref{lem_dl} below for a characterization of $\dom(\dl)$
using the chaos expansions. 

\smallskip

We recall a few properties of the Kabanov-Skorohod integral
from  \cite[Lemma 2.5]{BZ23}.

\begin{lemma}\label{lem25}

{\rm (i)}   
 Suppose $F\in\dom(D)$ is $\mathcal{F}_t$-measurable
for some fixed $t \in(0,\infty)$.
Then, the following Clark-Ocone formula holds:
\begin{align}
 \notag
F = \E[F] + \dl(V),
\end{align}

\noi
where $  (r,y,z)\in \mathbf{Z} \mapsto  V(r, y, z) := \E\big[ D_{r, y, z} F | \mathcal{F}_r \big]$ 
belongs to $\dom(\dl)$.

\smallskip
\noi
{\rm (ii)}  Suppose $V\in L^2(\Omega; \fH)$ is  predictable.
 Then,
$V\in\dom(\dl)$ and
$\dl(V)$ coincides with the  It\^o integral of $V$ against the compensated Poisson random
measure $\wh{N}$:
\begin{align}  \notag
\dl(V) = \int_0^\infty \int_{\R} \int_{\R_0} V(r, x, z) \wh{N}(dr, dx, dz).
\end{align}

\smallskip
\noi
{\rm (iii)} \textup{(Skorohod isometry)} Suppose  $V\in L^2(\Omega; \fH)$ satisfies

\noi
\begin{align}\label{intD}
\E\int_{\mathbf{Z}^2} \big| D_{\xi_0} V(\xi) \big|^2 \fm(d\xi) \fm(d\xi_0) <\infty.
\end{align}

\noi
Then,  $V\in\dom(\dl)$  and 
\begin{align}\label{Skiso}
\E\big[ \dl(V)^2 \big] = \| V\|^2_{L^2(\O;\fH)} 
+ \E\int_{\mathbf{Z}^2} \big(  D_\xi V(\xi_0) \big)\big(  D_{\xi_0} V(\xi) \big) \fm(d\xi_0) \fm(d\xi).
\end{align}

\noi
Moreover, if $V_1, V_2 \in L^2(\O;\fH)$ satisfy \eqref{intD}, then
\begin{align} \label{Skiso_1}
\begin{aligned}
\E\big[ \dl(V_1)\dl(V_2) \big] = \langle V_1,V_2 \rangle_{L^2(\O;\fH)} 
&+ \frac{1}{2}\E\int_{\mathbf{Z}^2} \big(  D_\xi V_1(\xi_0) \big)\big(  D_{\xi_0} V_2(\xi) \big) 
\fm(d\xi_0) \fm(d\xi) \\   
& + \frac{1}{2}\E\int_{\mathbf{Z}^2} \big(  D_\xi V_2(\xi_0) \big)\big(  D_{\xi_0} V_1(\xi) \big) \fm(d\xi_0) \fm(d\xi).
\end{aligned}
\end{align}

\smallskip
\noi
{\rm (iv)} \textup{(It\^o isometry)} 
Let $V$, $V_1$, and $V_2$ be as in {\rm (iii)}. If they are 
 additionally   $\mathbb{F}$-adapted, then \eqref{Skiso} and \eqref{Skiso_1}
 reduce to 
 
\noi
\begin{align}
\begin{aligned} \label{Skiso_2}
\E\big[ \dl(V)^2 \big] &= \bE\big[\| V\|^2_{\fH}\big] \\
\E\big[ \dl(V_1)\dl(V_2)  \big] &= \bE\big[\langle V_1,V_2 \rangle_{\fH}\big].
\end{aligned}
\end{align}
\end{lemma}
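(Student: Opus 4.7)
The plan is to derive all four statements from the Poisson chaos expansion of Section 2.2 combined with the duality relation \eqref{dualR}, handling (ii) and (iii) first and deducing (iv) and (i) afterwards. For (ii), I would first verify the identification on elementary predictable processes of the form $V(\omega, r, y, z) = X(\omega)\ind_{(a,b]}(r)\ind_B(y)\ind_C(z)$ with $X$ bounded and $\F_a$-measurable: expanding $X$ in chaos (its kernels vanish outside $[0,a]^k$ by Lemma \ref{rem_BZ-iv}) and testing the duality \eqref{dualR} against $F = I_n(f_n)$, one recognizes $\dl(V)$ as the Itô integral $X\widehat{N}((a,b]\times B \times C)$. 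Density of elementary predictable processes in the subspace of predictable elements of $L^2(\Omega;\fH)$, combined with the isometry \eqref{isometry}, extends this identification to arbitrary predictable $V$ and simultaneously shows $V\in\dom(\dl)$.

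For (iii), I would expand $V(\xi) = \sum_{n\geq 0} I_n(v_n(\xi, \cdot))$ with $v_n(\xi,\cdot)\in\fH^{\odot n}$ and, using duality against chaos elements, identify $\dl(V) = \sum_{n\geq 0} I_{n+1}(\widetilde{v_n})$, where $\widetilde{v_n}$ denotes the full symmetrization in all $n+1$ variables. The orthogonality relation \eqref{orth_rel} then gives
\[
\E[\dl(V)^2] = \sum_{n\geq 0} (n+1)! \,\|\widetilde{v_n}\|^2_{\fH^{\otimes(n+1)}}.
\]
I would then unpack the squared symmetrization by splitting the $(n+1)!$ permutations into the identity class (which reconstructs $\|V\|^2_{L^2(\Omega;\fH)}$) and the $n$ classes of transpositions exchanging the distinguished coordinate with each of the others; the latter classes reassemble into the double integral $\E\int(D_\xi V(\xi_0))(D_{\xi_0}V(\xi))\fm(d\xi_0)\fm(d\xi)$ via the identity $D_{\xi_0}I_n(v_n(\xi,\cdot)) = n I_{n-1}(v_n(\xi,\xi_0,\cdot))$. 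Polarization then yields \eqref{Skiso_1}, and hypothesis \eqref{intD} is precisely what guarantees absolute convergence of the resulting series as well as $V \in \dom(\dl)$.

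For (iv), if $V$ is $\mathbb{F}$-adapted, Lemma \ref{rem_BZ-iv} implies $D_\xi V(\xi_0) = 0$ whenever the time coordinate of $\xi$ exceeds that of $\xi_0$, and symmetrically for $D_{\xi_0}V(\xi)$; the product therefore vanishes $\fm^{\otimes 2}$-almost everywhere, and the cross term in \eqref{Skiso} drops out to yield \eqref{Skiso_2}. For (i), the chaos kernels of an $\F_t$-measurable $F$ are supported on $[0,t]^n$ by Lemma \ref{rem_BZ-iv}, so $V(r,y,z) := \E[D_{r,y,z}F \mid \F_r]$ is $\mathbb{F}$-adapted and its chaos representation, inserted into the formula for $\dl$ derived in (iii), recovers $\sum_n I_n(f_n) = F - \E[F]$ upon observing that the symmetrization of $f_n$ restricted to $\{t_1\leq\cdots\leq t_n\}$ reproduces $f_n$ with the correct combinatorial factor.

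The main obstacle will be the combinatorial bookkeeping in (iii): tracking precisely how the $(n+1)!$ permutations partition into the identity and the transposition classes, and showing that the transposition sum assembles into the double Malliavin-derivative integral rather than an off-diagonal artifact. Once this is handled cleanly, parts (i), (ii), and (iv) follow without introducing genuinely new ideas.
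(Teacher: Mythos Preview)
Your proposal is correct. The paper's own proof is much terser: it cites \cite[Lemma 2.5]{BZ23} for parts (i) and (ii), cites \cite[Theorem 5]{Last16} for the Skorohod isometry \eqref{Skiso} in (iii), obtains \eqref{Skiso_1} by the polarization identity $4ab=(a+b)^2-(a-b)^2$, and argues (iv) exactly as you do, via Lemma \ref{rem_BZ-iv}. Your direct chaos-expansion computations for (ii) and (iii) are essentially the content of those cited references, so the underlying route is the same; you simply supply the combinatorics that the paper outsources. One small remark on your sketch of (i): rather than invoking (iii) to place $V$ in $\dom(\dl)$, it is slightly cleaner to observe that the chaos kernels $v_{n-1}(\xi,\cdot)=n f_n(\xi,\cdot)\ind_{[0,r]^{n-1}}$ (with $\xi=(r,y,z)$) symmetrize to $f_n$ directly, since $\sum_{i}\ind_{\{t_j\le t_i\,\forall j\}}=1$ almost everywhere, and then the criterion of Lemma \ref{lem_dl} applies with $\sum_n n!\|\widetilde{v_{n-1}}\|^2=\sum_n n!\|f_n\|^2={\rm Var}(F)<\infty$.
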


Note that if $V$ is $\mathbb{F}$-adapted (not assumed to be  predictable) and   \eqref{intD} holds, 
$\dl(V)$ is {\it still} well-defined in the Skorohod sense
but not necessarily  in the It\^o sense, although we have the It\^o isometry.

 \begin{proof}[Proof of Lemma \ref{lem25}]
 Note that part (i) and part (ii) are taken from \cite[Lemma 2.5]{BZ23}.
 For the obtention of \eqref{Skiso}, one can refer to \cite[Theorem 5]{Last16}.
 Relation \eqref{Skiso_1} follows from \eqref{Skiso} by polarization, i.e.,
using the identity $4ab=(a+b)^2-(a-b)^2$.

To prove (iv), we note that for 
  $V$  being    $\mathbb{F}$-adapted, 
\[
\fm\otimes\fm\big\{ (\xi_0, \xi) \in\mathbf{Z}^2: 
 \big(  D_\xi V(\xi_0) \big)\big(  D_{\xi_0} V(\xi) \big)  \neq 0 \big\}
=0
\]
in view of Lemma \ref{rem_BZ-iv}, 
so that the second term in \eqref{Skiso} vanishes. 
This implies that  the first equality in  \eqref{Skiso_2},
whilst  the second equality in   \eqref{Skiso_2}
follows by polarization.
\qedhere

\end{proof}

The following result  is a Poincar\'e-type inequality
and  will be used when we establish the limiting covariance structure 
of $F_R(t)$ in \eqref{FRT}.
Recall from \eqref{def_JK} that $J_k$ denotes the projection operator onto the 
$k$-th Wiener chaos $\C_k$  and we will use this notation 
in the following two lemmas.

\begin{lemma}[Poincar\'e inequality]
\label{2-Poincare}
For any $F,G \in \dom(D)$, we have:
\[
|{\rm Cov} (F,G)| \leq \int_0^{\infty}\int_{\bR}\int_{\bR_0}\|D_{r,y,z}F\|_2\|D_{r,y,z}G\|_2\, drdy \nu(dz).
\]
\end{lemma}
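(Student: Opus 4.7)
The plan is to go through the Clark--Ocone representation combined with the It\^o isometry, in the spirit of the standard second-order Poincar\'e argument, and then apply Cauchy--Schwarz pointwise in the space--time--mark variable. Concretely, the key intermediate identity I aim to establish is
\begin{align*}
\mathrm{Cov}(F,G) = \E\left[\int_0^\infty\!\int_\R\!\int_{\R_0} \E[D_{r,y,z}F\mid \F_r]\,\E[D_{r,y,z}G\mid \F_r]\,dr\,dy\,\nu(dz)\right].
\end{align*}
Once this is in hand, the Cauchy--Schwarz inequality in $L^2(\Omega)$ applied pointwise in $(r,y,z)$ gives the bound with $\|\E[D_{r,y,z}F\mid \F_r]\|_2\,\|\E[D_{r,y,z}G\mid \F_r]\|_2$ in the integrand, and the $L^2$-contraction property of conditional expectation then replaces the predictable projections by the unconditioned $L^2(\Omega)$-norms of $D_{r,y,z}F$ and $D_{r,y,z}G$.

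To obtain the identity, I would first treat the case where $F$ and $G$ are $\F_t$-measurable for some $t\in(0,\infty)$. Then Lemma \ref{lem25}-(i) yields $F-\E[F]=\dl(V^F)$ and $G-\E[G]=\dl(V^G)$, where $V^F_{r,y,z}=\E[D_{r,y,z}F\mid\F_r]$ and $V^G_{r,y,z}=\E[D_{r,y,z}G\mid\F_r]$ are predictable elements of $L^2(\Omega;\fH)$. Applying the It\^o isometry in Lemma \ref{lem25}-(iv) to these predictable integrands gives
\begin{align*}
\mathrm{Cov}(F,G) = \E[\dl(V^F)\dl(V^G)] = \E\langle V^F, V^G\rangle_{\fH},
\end{align*}
which is precisely the displayed identity.

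The second step is to remove the hypothesis that $F,G$ are $\F_t$-measurable. Every $F\in\mathrm{dom}(D)\subset L^2(\Omega,\s\{N\},\PP)$ has a chaos expansion $F=\E[F]+\sum_{n\ge 1}I_n(f_n)$ with $\sum_n n\cdot n!\,\|f_n\|_{\fH^{\otimes n}}^2<\infty$. Set $F^{(T)}=\E[F|\F_T]=\E[F]+\sum_n I_n(f_n\ind_{[0,T]^n\times(\R\times\R_0)^n})$, which is $\F_T$-measurable, and similarly define $G^{(T)}$. By the chaos characterisation \eqref{lem_XY}--\eqref{lem_XY2} and dominated convergence on each chaos, one has $\|D F^{(T)}-DF\|_{L^2(\Omega;\fH)}\to 0$ and $\|F^{(T)}-F\|_2\to 0$ as $T\to\infty$, and likewise for $G$. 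Applying the identity already established to $(F^{(T)},G^{(T)})$ and passing to the limit (the left-hand side converges to $\mathrm{Cov}(F,G)$, the right-hand side to the desired integral by Cauchy--Schwarz in $L^2(\O;\fH)$) gives the identity for arbitrary $F,G\in\mathrm{dom}(D)$.

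The step I expect to be the most delicate is the second one: justifying that one may reduce to the $\F_T$-measurable case and take the limit. The risk is that, although each $F^{(T)}$ and $G^{(T)}$ sit in $\mathrm{dom}(D)$ with controlled derivatives, the predictable projections $V^{F^{(T)}}, V^{G^{(T)}}$ must be compared to $V^F, V^G$ uniformly enough to commute limit and integration; the chaos expansion makes this bookkeeping transparent since truncating by $\ind_{[0,T]^n}$ interacts cleanly with the predictable projection $\E[\cdot\mid\F_r]$. Everything else is a standard combination of the Clark--Ocone formula, It\^o isometry, and two Cauchy--Schwarz applications (one in $\Omega$, one in $\mathbf{Z}$, the latter being unnecessary as we need only the pointwise-in-$\xi$ Cauchy--Schwarz before integration).
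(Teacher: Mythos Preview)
Your Clark--Ocone route is correct, and indeed the paper explicitly acknowledges it as a valid approach (citing \cite[Proposition 6.3]{CKNP21}) before choosing to present a different argument. The paper's written proof instead uses the Ornstein--Uhlenbeck semigroup: writing a centered $G$ as $G=\delta(-DL^{-1}G)$, duality gives ${\rm Cov}(F,G)=\E\langle DF, -DL^{-1}G\rangle_{\fH}$, and then the representation $-DL^{-1}G=\int_0^\infty e^{-t}P_t\, DG\, dt$ together with the $L^2$-contraction of $P_t$ yields $\|D_\xi L^{-1}G\|_2\le\|D_\xi G\|_2$.

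The trade-off is this. Your approach relies on the filtration and It\^o structure (Clark--Ocone plus It\^o isometry), and you pay for that with the approximation step $F\mapsto F^{(T)}=\E[F\mid\F_T]$ to reduce to the $\F_T$-measurable case. The paper's semigroup argument works directly for all $F,G\in\dom(D)$ with no truncation and, as the authors emphasise, does not need the It\^o structure at all; the contraction of conditional expectation in your argument is replaced by the contraction of $P_t$. One small technical remark on your write-up: Lemma~\ref{lem25}-(iv) as stated inherits the second-order hypothesis \eqref{intD} from part (iii), which you have not verified for $V^F$; since your $V^F$ is predictable, it is cleaner to invoke Lemma~\ref{lem25}-(ii) to identify $\delta(V^F)$ with the It\^o integral and then use the basic isometry \eqref{isometry} directly.
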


\begin{proof} 
One can  apply  the same argument as in the Gaussian case (\cite[Proposition 6.3]{CKNP21})
that is based on the Clark-Ocone formula. 
Here, we  {\it sketch}  a different proof that would {\it not} need the It\^o structure and may be 
of independent interest.  

With $J_k$ as in \eqref{def_JK}, 
we define 
\[
L F = \sum_{k=1}^\infty -k J_k F \quad {\rm and}\quad
L^{-1} F = \sum_{k=1}^\infty -\frac{1}{k} J_k F,
\]
whenever the above series are well defined in $L^2(\O)$. The operators $L$ and $L^{-1}$
are called Ornstein-Uhlenbeck operator and its pseudo-inverse, while the associated
semigroup is given by $P_t = e^{tL}$ and it satisfies 
the contraction property on $L^2(\O)$:
\[
\|P_t F'\|_2 \leq \|F'\|_2
\]
 for any $F'\in L^2(\Omega)$, which can be easily verified by using the chaos expansion. 
Using the identity $L = -\dl D$, 
we can write  $G=\delta(-DL^{-1}G)$ for a centered $G\in\dom(D)$; 
see, e.g., \cite{PSTU10} for more details
on these facts.

Without loss of generality, we assume that $G$ is centered. Then,
using \eqref{dualR} with  $G=\delta(-DL^{-1}G)$, 
we have 
 \[
 {\rm Cov}(F, G) = \E[  F \delta(-DL^{-1}G)    ] = \E[ \langle DF, - DL^{-1}G \rangle_{\fH}],
 \] 
 and thus, 
 
 \noi
  \begin{align} \label{CFG1}
  |{\rm Cov}(F, G)|\leq \int_{\mathbf{Z}} \|D_{\xi}F\|_2 \| D_{\xi}L^{-1}G\|_2 \fm(d\xi).
  \end{align}
  
  \noi
 Note that one can deduce from the chaos expansion that $-DL^{-1}G = \int_0^\infty e^{-t} P_t DGdt$
 and therefore, using Minkowski's inequality and the above contraction property,
 we can obtain 
 $\| D_{\xi} L^{-1}G \|_2 \leq \|D_{\xi} G\|_2$.
 Finally,   the conclusion follows from \eqref{CFG1} immediately.
 \qedhere

\end{proof}

In what follows, we present a useful criterion on the Malliavin differentiability.

\begin{lemma} \label{lem_1}
Let $\mathcal{A}$ be a nonempty index set and let $\{F_{n, \al}, F_\al : n\in\N_{\geq 0}, \al\in\mathcal{A}\}$
be a family of random variables in $L^2(\O,\s\{N\}, \PP)$.

{\rm (i)}
Suppose $F_{n,\al}$ converges uniformly in $L^2(\O)$ to $F_\al$  as $n\to\infty$:

\noi
\begin{align}  \notag
\lim_{n\to\infty}\sup_{\al\in \mathcal{A}} \| F_{n,\al} - F_\al \|_2  = 0.
\end{align}

\noi
 Then for any $k\in\N_{\geq 0}$,
we have  $D J_k F_{n,\al}$   converges uniformly 
in $L^2(\O; \fH)$ to $DJ_k F_\al$:

\noi
\begin{align}   \notag
\lim_{n\to\infty}\sup_{\al\in \mathcal{A}} \| DJ_k F_{n,\al} - DJ_k F_\al \|_{L^2(\O;\fH)}  = 0.
\end{align}

\smallskip
\noi
{\rm (ii)} 
Assume additionally that  $F_{n,\al}\in\dom(D)$ satisfies 

\noi
\begin{align}\label{MDB1}
\sup_{\al\in\mathcal{A}} \sup_{n\geq 1} \| DF_{n, \al}\|_{L^2(\O;\fH)} <\infty. 
\end{align}

\noi
Then, $F_\al\in\dom(D)$ for each $\al\in\mathcal{A}$ 
with
\begin{align}\label{MDB1b}
\sup_{\al\in\mathcal{A}}  \| DF_{\al}\|_{L^2(\O;\fH)} <\infty. 
\end{align}

\noi
 Moreover, for each $\al\in\mathcal{A}$,
 we have  $DF_{n,\al}$ weakly converges to $DF_\al$ in $L^2(\O;\fH)$
as $n\to\infty$. 

\end{lemma}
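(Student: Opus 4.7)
\emph{Proof plan.} Part (i) reduces to a one-line computation. The projection $J_k$ is orthogonal on $L^2(\Omega)$ (hence a contraction), and any element of the $k$-th chaos automatically lies in $\dom(D)$. Applying linearity and \eqref{lem_XY} to $J_k(F_{n,\al}-F_\al)\in\C_k$, together with $\|J_k(F_{n,\al}-F_\al)\|_2\leq \|F_{n,\al}-F_\al\|_2$, gives
\begin{align*}
\sup_{\al\in\mathcal{A}}\|DJ_k F_{n,\al}-DJ_k F_\al\|_{L^2(\Omega;\fH)}^2
= k\sup_{\al\in\mathcal{A}}\|J_k(F_{n,\al}-F_\al)\|_2^2
\leq k\sup_{\al\in\mathcal{A}}\|F_{n,\al}-F_\al\|_2^2,
\end{align*}
which tends to $0$ by the uniform $L^2$-convergence hypothesis of (i).

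For (ii), the first move is to write the chaos expansions $F_{n,\al}=\E[F_{n,\al}]+\sum_{k\geq 1} I_k(f_{k,n,\al})$ and $F_\al=\E[F_\al]+\sum_{k\geq 1} I_k(f_{k,\al})$. By \eqref{lem_XY2}, the hypothesis \eqref{MDB1} reads $M^2\myeq\sup_{\al,n}\sum_{k\geq 1} k\,k!\,\|f_{k,n,\al}\|_{\fH^{\otimes k}}^2<\infty$. Part (i) applied pointwise in $\al$, combined with the orthogonality relation \eqref{orth_rel}, yields $\|f_{k,n,\al}-f_{k,\al}\|_{\fH^{\otimes k}}\to 0$ as $n\to\infty$ for each fixed $\al$ and each $k\geq 1$. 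Fatou's lemma on the counting measure over $\N_{\geq 1}$ then gives
\[
\sum_{k\geq 1} k\,k!\,\|f_{k,\al}\|_{\fH^{\otimes k}}^2 \leq \liminf_{n\to\infty}\sum_{k\geq 1}k\,k!\,\|f_{k,n,\al}\|_{\fH^{\otimes k}}^2 \leq M^2,
\]
which through \eqref{lem_XY2} produces both $F_\al\in\dom(D)$ and the uniform bound \eqref{MDB1b}.

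It remains to establish the weak convergence $DF_{n,\al}\rightharpoonup DF_\al$ in $L^2(\Omega;\fH)$ for each fixed $\al$. The sequence $\{DF_{n,\al}\}_n$ is bounded in $L^2(\Omega;\fH)$ by $M$ from the previous step. For any $V\in\dom(\dl)$, the duality \eqref{dualR} gives
\[
\E\big[\langle DF_{n,\al}-DF_\al,V\rangle_\fH\big] = \E\big[(F_{n,\al}-F_\al)\dl(V)\big]\xrightarrow{n\to\infty}0
\]
by Cauchy--Schwarz and the $L^2(\Omega)$-convergence $F_{n,\al}\to F_\al$. Since $\dom(\dl)$ is dense in $L^2(\Omega;\fH)$, a standard density argument that writes an arbitrary $V\in L^2(\Omega;\fH)$ as $W+(V-W)$ with $W\in\dom(\dl)$ close in norm and controls the residual via the uniform bound $\|DF_{n,\al}-DF_\al\|_{L^2(\Omega;\fH)}\leq 2M$ extends this convergence to all of $L^2(\Omega;\fH)$. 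The main technical content is the Fatou step, which requires careful bookkeeping between the pointwise (in $\al$) strong convergence from part (i) and the uniform (in $\al$) summability from \eqref{MDB1}; everything else is a packaging of \eqref{lem_XY}--\eqref{lem_XY2} and the duality \eqref{dualR}.
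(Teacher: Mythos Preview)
Your proof is correct and follows essentially the same route as the paper: part (i) via \eqref{lem_XY} and the contraction property of $J_k$, and part (ii) via Fatou's lemma on the chaos norms (the paper phrases this in terms of $\|DJ_kF_{\al}\|_{L^2(\O;\fH)}$ rather than the kernel norms $\|f_{k,\al}\|_{\fH^{\otimes k}}$, but by \eqref{lem_XY} these are the same quantity), followed by the duality argument with $V\in\dom(\dl)$ and density to upgrade to all of $L^2(\O;\fH)$.
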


\begin{proof}  
The convergence in part (i) follows immediately from 
the fact that $\E\big[ \| DJ_k F \|_\fH^2 \big] = k \E\big[ ( J_k(F) )^2 \big]$
for any $F\in L^2(\O)$ (see, e.g., \eqref{lem_XY}):

\noi
\begin{align*}
\E\big[ \| DJ_k (F_{n,\al} - F_\al )\|^2_\fH  \big]
= k \| J_k (F_{n,\al} - F_\al )\|^2_2  \leq k \| F_{n,\al} - F_\al \|^2_2 \xrightarrow{n\to\infty} 0.
\end{align*}

\smallskip

Now let us prove (ii).\footnote{The 
usual proof (as done in the classic book \cite{Nua06})
starts from extracting a subsequence. }
Recall that $F\in\dom(D)$ if and only if 
\[
 \sum_{k\geq 1} k  \|  J_k F \|_2^2 < \infty,
\]
which is also equivalent to $ \sum_{k\geq 1}   \| D J_k F \|^2_{L^2(\O;\fH)} < \infty$.
Meanwhile, we can deduce from part (i) and the condition \eqref{MDB1},
together with Fatou's lemma,
 that

\noi
\begin{align*}
\sup_{\al} \| DF_\al \|^2_{L^2(\O;\fH)} 
 &=  \sup_{\al} \sum_{k\geq 1}   \| D J_k F_\al \|^2_{L^2(\O;\fH)}  
 =  \sup_{\al}  \sum_{k\geq 1}  \lim_{n\to\infty}  \| D J_k F_{n,\al} \|^2_{L^2(\O;\fH)}   \\
 &\leq \sup_{\al} \liminf_{n\to\infty} \sum_{k\geq 1}  \| D J_k F_{n,\al} \|^2_{L^2(\O;\fH)}   
 = \sup_{\al, n}  \| D  F_{n,\al} \|^2_{L^2(\O;\fH)}   < \infty.
\end{align*}

\noi
That is, we just proved that $F_\al\in\dom(D)$ and \eqref{MDB1b}. 
Next, we need to show the weak convergence
of $DF_{n, \al}$ to $DF_\al$ in $L^2(\O;\fH)$: as $n\to\infty$,
\begin{align}\label{MDB2}
\E\big[ \jb{ DF_{n,\al}, V}_\fH \big] \to \E\big[ \jb{ DF_\al, V}_\fH \big] 
\end{align}
for any $V\in L^2(\O; \fH)$. Note first that the above convergence \eqref{MDB2} holds for
any $V\in\dom(\dl)$. Indeed, using the relation \eqref{dualR},
we have 

\noi
\begin{align*}
\E\big[ \jb{ DF_{n, \al}, V}_\fH \big]
= \E\big[ F_{n_\al} \dl(V) \big]  
&\xrightarrow{n\to\infty}   \E\big[ F_\al \dl(V) \big] 
\quad \text{(as $F_{n,\al}\to F_\al$ in $L^2(\O)$)} \\
&  = \E\big[ \jb{ DF_\al, V}_\fH \big] .
\end{align*}
The convergence \eqref{MDB2} for any $V\in L^2(\O; \fH)$
follows from the density of $\dom(\dl)$ in $L^2(\O;\fH)$
and the uniform bound \eqref{MDB1}. 
Hence, the proof  is completed. \qedhere

\end{proof}

Suppose $V\in L^2(\O;\fH)$.
Then, one can deduce from the Fubini's theorem
that for $\fm$-almost every $\xi\in\mathbf{Z}$, 
$V(\xi)\in L^2(\O)$, and thus admits the following chaos expansion

\noi
\begin{align}\label{227}
V(\xi) = h_0(\xi) + \sum_{n=1}^\infty I_n\big(  h_n(\xi, \bul)  \big),
\end{align}

\noi
where $h_0(\xi) = \E[ V(\xi)]$, and  $h_n(\xi, \bul)\in \fH^{\odot n}  $ may not
be symmetric in all of its $(n+1)$ arguments.
Moreover, due to $V\in L^2(\O; \fH)$, we get 

\noi
\begin{align}\label{227b}
\E\big[ \| V\|_\fH^2 \big] = \sum_{n\geq 0} n! \| h_n \|^2_{\fH^{\otimes (n+1)}} < +\infty.
\end{align}

The following lemma is the Poisson  analogue of \cite[Proposition 1.3.7]{Nua06} in the Gaussian setting.

\begin{lemma} \label{lem_dl}
Let  $V\in L^2(\O;\fH)$ and $V(\xi)$ have the chaos expansion as in  \eqref{227}-\eqref{227b}.
Then, $V\in\dom(\dl)$
if and only if 

 \noi
\begin{align}  \label{227d}
\sum_{n=1}^\infty n! \| \wt{h}_{n-1} \|_{\fH^{\otimes n}}^2 < \infty.
\end{align}

\noi
For $V\in\dom(\dl)$, we have

 \noi
\begin{align}   \notag
  \dl(V) = \sum_{n=1}^\infty I_n( \wt{h}_{n-1} ).
\end{align}
Here, $\wt{h}_n$ is the symmetrization of $h_n$ in all the $(n+1)$ arguments:

\noi
\begin{align}  \label{def-wth}
\begin{aligned}
\wt{h}_n(\xi_1,\ldots,\xi_n,\xi_{n+1})
&=
\frac{1}{n+1}\sum_{i=1}^{n+1}h_n(\xi_i, \pmb{\xi_{\neq i} } ),
\end{aligned}
\end{align}

\noi
where $\pmb{\xi_{\neq i} } $ stands for the $n$ arguments $(\xi_1, ... , \xi_{i-1}, \xi_{i+1}, ... ,\xi_{n+1})$
when $i\geq 2$,  and for $(\xi_2, ... ,\xi_{n+1})$ when $i=1$.\footnote{Note that the canonical 
symmetrization of $h_n$ reduces to the above form \eqref{def-wth}, since
$h_n(\xi, \xi_1, ... , \xi_n)$ is symmetric in the $n$ arguments $(\xi_1, ... , \xi_n)$
for any fixed $\xi$.}

\end{lemma}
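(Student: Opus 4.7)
The plan is to mimic the standard Gaussian-case argument (see \cite[Proposition 1.3.7]{Nua06}) by testing $V$ against an arbitrary $F\in\dom(D)$ via the duality relation \eqref{dualR}, and then identifying the adjoint action using the chaos expansion together with symmetrization.

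First I would fix $F\in\dom(D)$ with chaos expansion $F=\E[F]+\sum_{n\geq 1}I_n(f_n)$, where $f_n\in\fH^{\odot n}$, so that $D_\xi F=\sum_{n\geq 1}nI_{n-1}(f_n(\xi,\bul))$. I would then compute
\[
\E\big[\jb{DF,V}_\fH\big]=\int_{\mathbf{Z}}\E\big[D_\xi F\cdot V(\xi)\big]\fm(d\xi),
\]
where Fubini is justified by the Cauchy--Schwarz bound $\E[\|DF\|_\fH\|V\|_\fH]<\infty$. Inserting the chaos expansions of $D_\xi F$ and $V(\xi)$ and using the orthogonality relation \eqref{orth_rel} termwise gives
\[
\E\big[\jb{DF,V}_\fH\big]=\sum_{n\geq 1}n\int_{\mathbf{Z}}(n-1)!\,\langle f_n(\xi,\bul),h_{n-1}(\xi,\bul)\rangle_{\fH^{\otimes(n-1)}}\fm(d\xi)=\sum_{n\geq 1}n!\,\langle f_n,h_{n-1}\rangle_{\fH^{\otimes n}}.
\]
Since $f_n\in\fH^{\odot n}$ is symmetric, one may replace $h_{n-1}$ by its symmetrization $\wt h_{n-1}$ without affecting the inner product, yielding
\begin{equation}\label{eq:dualpair}
\E\big[\jb{DF,V}_\fH\big]=\sum_{n\geq 1}n!\,\langle f_n,\wt h_{n-1}\rangle_{\fH^{\otimes n}}.
\end{equation}

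Next I would read off the characterization. By definition, $V\in\dom(\dl)$ exactly when the linear functional $F\mapsto \E[\jb{DF,V}_\fH]$ is continuous on $\dom(D)$ for the $L^2(\Omega)$-norm. Using \eqref{eq:dualpair} together with $\|F\|_2^2=\E[F]^2+\sum_{n\geq 1}n!\|f_n\|_{\fH^{\otimes n}}^2$, the Cauchy--Schwarz inequality in the weighted $\ell^2$ sequence space shows that this continuity is equivalent to the summability condition \eqref{227d}. When \eqref{227d} holds, the Riesz representation applied to the bounded linear functional yields a unique element of $L^2(\Omega,\sigma\{N\},\P)$, whose chaos expansion must be $\sum_{n\geq 1}I_n(\wt h_{n-1})$ in view of \eqref{eq:dualpair} and \eqref{orth_rel}; by the duality \eqref{dualR} this element is precisely $\dl(V)$.

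The main obstacle, and the one I would treat with care, is the rigorous justification of Fubini and of the termwise computation leading to \eqref{eq:dualpair}: one needs that $\xi\mapsto h_n(\xi,\bul)$ is measurable as an $\fH^{\otimes n}$-valued map and that the iterated sum over $n$ and integral over $\xi$ may be exchanged. The first point is standard since $V\in L^2(\Omega;\fH)$ forces each $h_n\in\fH^{\otimes(n+1)}$; the second is handled by first truncating $F$ and $V$ to finitely many chaoses (where only finitely many nonzero terms appear), establishing \eqref{eq:dualpair} on these dense subspaces, and then passing to the limit using \eqref{227b} and the hypothesis \eqref{227d}. A minor additional point is to verify that the symmetrization step $\langle f_n,h_{n-1}\rangle=\langle f_n,\wt h_{n-1}\rangle$ uses only the symmetry of $f_n$ in its $n$ arguments together with the explicit form \eqref{def-wth}, which is straightforward.
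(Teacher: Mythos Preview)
Your argument is correct and follows essentially the same route as the paper: both compute the duality pairing $\E[\jb{DF,V}_\fH]$ via the chaos expansions and orthogonality, arriving at $\sum_{n\geq 1} n!\,\langle f_n,\wt h_{n-1}\rangle_{\fH^{\otimes n}}$, and then read off the characterization of $\dom(\dl)$ and the formula for $\dl(V)$. The only organizational difference is that the paper treats the ``if'' direction by citing \cite[Lemma~2.4]{BZ23} and proves the ``only if'' direction by assuming $\dl(V)\in L^2(\O)$ and identifying its chaos projections $J_k(\dl(V))=I_k(\wt h_{k-1})$ one at a time, whereas you handle both directions simultaneously via the boundedness of the linear functional $F\mapsto\E[\jb{DF,V}_\fH]$; your claim that ``Cauchy--Schwarz shows equivalence'' for the necessity direction should be made explicit by testing against $F_N=\sum_{n=1}^N I_n(\wt h_{n-1})\in\dom(D)$, but this is routine.
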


\begin{proof}
For the ``if'' part, see,  e.g., Lemma 2.4 in \cite{BZ23}. Next, we show the ``only-if'' direction. 

\smallskip

Assume $V\in\dom(\dl)$. Then, for any $F\in\dom(D)$,
we deduce from duality relation \eqref{dualR}
and orthogonality relation  that 
for $k\geq 1$,

\noi
\begin{align*}
\E\big[ \jb{D J_k F, V}_\fH \big] 
=  \E\big[ (J_k F) \dl(V) \big]  = \E\big[ F J_k (\dl(V)) \big],
\end{align*}
while, with $D_\xi J_k F\in\C_{k-1}$, we also deduce from orthogonality relation  that

\noi
\begin{align*}
\E\big[ \jb{D J_k F, V}_\fH \big] 
&= \int_{\mathbf{Z}} \E\big[  (D_\xi J_k F) V(\xi) \big]\, \fm(d\xi) \\
&=  \int_{\mathbf{Z}} \E\big[  (D_\xi  F)  J_{k-1}V(\xi) \big]\, \fm(d\xi) \\
&=  \E \int_{\mathbf{Z}}    (D_\xi  F)  I_{k-1} ( h_{k-1}(\xi)) \, \fm(d\xi) \\
&= \E\big[ F I_k(\wt h_{k-1}) \big],
\end{align*}

\noi
where the last equality follows from  \cite[Lemma 2.4]{BZ23}. In particular, the above 
two equations imply $J_k ( \dl (V) ) = I_k(\wt h_{k-1}) $, and thus that 
\[
\dl(V) = \sum_{k=1}^\infty  I_k(\wt h_{k-1})
\]
with 
\[
\| \dl(V)\|_2^2 = \sum_{k=1}^\infty k!  \| \wt h_{k-1} \|^2_{\fH^{\otimes k}} <\infty,
\]
which is exactly the condition \eqref{227d}. Hence, the proof is completed. \qedhere

\end{proof}

In the following, we provide the {\it Heisenberg's commutation} relation in 
the Poisson setting, which is crucial in deriving 
the integral equation \eqref{eq_MD} for the Malliavin derivative 
of the solution to \eqref{SWE};
see further discussions in Remark \ref{rem_MD}.

\begin{lemma}\label{lem_Heis}\textup{(Heisenberg's commutation relation)} 
Let $V\in\dom(\dl)$ as in Lemma \ref{lem_dl}.
Assume $\dl(V)\in\dom(D)$ and $V(\xi_0)\in\dom(D)$
for $\fm$-almost every $\xi_0\in\mathbf{Z}$ such that 

\noi
\begin{align}\label{cond_H1}
\E\int_{\mathbf{Z}^2} \big| D_{\xi_0} V(\xi) \big|^2 \fm(d\xi) \fm(d\xi_0) <\infty.
\end{align}

\noi
 Then,     $D_{\xi_0}V  \in \dom(\dl)$ for $\fm$-almost 
every $\xi_0\in\mathbf{Z}$ with

\noi
\begin{align}\label{com_rel}
D_{\xi_0}  \dl(V)=  \dl(  D_{\xi_0} V) + V(\xi_0) .
\end{align}

\end{lemma}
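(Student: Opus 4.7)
The plan is to verify \eqref{com_rel} by matching chaos kernels on both sides, relying throughout on the chaos characterization of $\delta$ from Lemma \ref{lem_dl}. Starting from the expansion $V(\xi)=h_0(\xi)+\sum_{n\geq 1}I_n(h_n(\xi,\bullet))$ with $h_n(\xi,\bullet)\in\fH^{\odot n}$, Lemma \ref{lem_dl} gives $\delta(V)=\sum_{n\geq 1}I_n(\widetilde{h}_{n-1})$ with $\widetilde{h}_{n-1}$ the symmetrization in all $n$ variables defined in \eqref{def-wth}; since $\delta(V)\in\dom(D)$, we may differentiate chaos-by-chaos to obtain $D_{\xi_0}\delta(V)=\sum_{n\geq 1}n\,I_{n-1}(\widetilde{h}_{n-1}(\xi_0,\bullet))$. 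For the right-hand side, the hypothesis $V(\xi_0)\in\dom(D)$ for a.e.\ $\xi_0$ yields $D_{\xi_0}V(\xi)=\sum_{m\geq 1}m\,I_{m-1}(h_m(\xi,\xi_0,\bullet))$, so that, viewed as an element of $L^2(\Omega;\fH)$ indexed by $\xi$, its $k$-th chaos kernel is $g_k^{\xi_0}(\xi,\bullet):=(k+1)h_{k+1}(\xi,\xi_0,\bullet)$ (symmetric in the last $k$ entries because $h_{k+1}$ is symmetric in its last $k+1$ entries). Once $D_{\xi_0}V\in\dom(\delta)$ is established, Lemma \ref{lem_dl} gives $\delta(D_{\xi_0}V)=\sum_{k\geq 1}I_k(\widetilde{g_{k-1}^{\xi_0}})$, and a direct application of \eqref{def-wth} produces $\widetilde{g_{k-1}^{\xi_0}}(\xi_1,\dots,\xi_k)=\sum_{i=1}^k h_k(\xi_i,\xi_0,\pmb{\xi_{\neq i}})$.

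The core step is the symmetrization identity obtained by unfolding \eqref{def-wth} applied to $\widetilde{h}_{n-1}$ at $(\xi_0,\xi_1,\dots,\xi_{n-1})$ and isolating the $i=1$ summand:
\[
n\,\widetilde{h}_{n-1}(\xi_0,\xi_1,\dots,\xi_{n-1})=h_{n-1}(\xi_0,\xi_1,\dots,\xi_{n-1})+\sum_{i=1}^{n-1}h_{n-1}(\xi_i,\xi_0,\pmb{\xi_{\neq i}}).
\]
Taking $k=n-1$, the sum on the right is precisely $\widetilde{g_{k-1}^{\xi_0}}$, while the singleton $h_{n-1}(\xi_0,\bullet)$ is exactly the $(n-1)$-th chaos kernel of $V(\xi_0)$. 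Therefore the $(n-1)$-th chaos kernel of $D_{\xi_0}\delta(V)$ coincides with that of $\delta(D_{\xi_0}V)+V(\xi_0)$ for every $n\geq 1$, and \eqref{com_rel} follows from the orthogonality of chaoses.

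To close the argument I still need $D_{\xi_0}V\in\dom(\delta)$ for $\fm$-a.e.\ $\xi_0$, i.e., $\sum_{k\geq 1}k!\|\widetilde{g_{k-1}^{\xi_0}}\|_{\fH^{\otimes k}}^2<\infty$ by Lemma \ref{lem_dl}. Rewriting the above identity as $\widetilde{g_{k-1}^{\xi_0}}=(k+1)\widetilde{h}_k(\xi_0,\bullet)-h_k(\xi_0,\bullet)$ and applying $(a+b)^2\leq 2a^2+2b^2$, it suffices to verify that $\sum_k(k+1)^2 k!\|\widetilde{h}_k(\xi_0,\bullet)\|_{\fH^{\otimes k}}^2$ and $\sum_k k!\|h_k(\xi_0,\bullet)\|_{\fH^{\otimes k}}^2$ are finite for $\fm$-a.e.\ $\xi_0$. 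Integrating the first against $\fm(d\xi_0)$ gives $\sum_{n\geq 1}n\cdot n!\,\|\widetilde{h}_{n-1}\|_{\fH^{\otimes n}}^2=\|D\delta(V)\|_{L^2(\Omega;\fH)}^2<\infty$ (by $\delta(V)\in\dom(D)$), while integrating the second is bounded by $\|V\|_{L^2(\Omega;\fH)}^2<\infty$; condition \eqref{cond_H1}, by Fubini, additionally guarantees that $D_{\xi_0}V\in L^2(\Omega;\fH)$ for a.e.\ $\xi_0$, which is the prerequisite for even discussing its membership in $\dom(\delta)$. The main technical obstacle is precisely this bookkeeping of symmetrizations: separating the ``extra copy'' $h_{n-1}(\xi_0,\bullet)$ from $n\widetilde{h}_{n-1}(\xi_0,\bullet)$ is what produces the additional term $V(\xi_0)$ on the right-hand side of the Heisenberg commutation relation.
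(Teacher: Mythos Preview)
Your proof is correct and follows essentially the same chaos-expansion approach as the paper: both arguments compute the symmetrization identity $\widetilde{g_{k-1}^{\xi_0}}=(k+1)\widetilde{h}_k(\xi_0,\bullet)-h_k(\xi_0,\bullet)$, use it to match kernels chaos-by-chaos, and verify $D_{\xi_0}V\in\dom(\delta)$ by bounding the two resulting sums separately. The only minor difference is that for the term $\sum_k k!\|h_k(\xi_0,\bullet)\|^2$ you invoke $V\in L^2(\Omega;\fH)$ directly (which indeed suffices via \eqref{227b}), whereas the paper appeals to the stronger bound $\sum_n n\cdot n!\|h_n\|^2<\infty$ coming from \eqref{cond_H1}; both are valid.
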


\begin{proof}

  Let $V\in\dom(\dl)$  have the expression \eqref{227}.
 Then,   the assumption $V(\xi)\in \dom(D)$ 
 implies 
 
 \noi
 \begin{align}\label{DxV}
 D_{\xi_0} V(\xi) = \sum_{n=1}^\infty n I_{n-1}\big( h_n(\xi, \xi_0, \bul) \big).
 \end{align}

\noi
Since 
the condition \eqref{cond_H1}    implies

\noi
\begin{align}
\int_{\mathbf{Z}^2} \E \Big[ \big| D_{\xi_0} V(\xi) \big|^2 \Big]  \fm(d\xi) \fm(d\xi_0) 
&=  \int_{\mathbf{Z}^2}  \sum_{n=1}^\infty n^2 (n-1)! \| h_n(\xi, \xi_0, \bul) \|^2_{\fH^{\otimes (n-1)}}  
\fm(d\xi) \fm(d\xi_0) \notag   \\
&= \sum_{n=1}^\infty n  n! \| h_n \|^2_{\fH^{\otimes (n+1)}},
\label{cond_H2}
\end{align}  

\noi
we get 

\noi
\begin{align}
\sum_{n=1}^\infty n  n! \| h_n \|^2_{\fH^{\otimes (n+1)}} < \infty.
 \notag
\end{align}  

Before proving  $D_{\xi_0} V\in\dom(\dl)$,
we point out that the assumption $\dl(V)\in\dom(D)$ leads to 

\noi
\begin{align}
\sum_{n=1}^\infty n n! \| \wt{h}_{n-1}\|^2_{\fH^{\otimes n}} <\infty
\quad\text{or equivalently}
\quad
\sum_{n=1}^\infty (n+1)^2 n! \| \wt{h}_{n}\|^2_{\fH^{\otimes (n+1)}} <\infty.
\label{cond_H3}
\end{align}

\noi
And it is also immediate that 

\noi
\begin{align}
D_{\xi_0} \dl(V) = \sum_{n=1}^\infty n I_{n-1}\big(  \, \wt{h}_{n-1}(\xi_0) \big), 
\label{Dxdl}
\end{align}

\noi
where $ \wt{h}_{n-1}(\xi_0) $ is obtained by first symmetrizing $h_{n-1}$
and then fixing the first argument to be $\xi_0$.

Now let us show $D_{\xi_0} V\in\dom(\dl)$.
For this, we apply Lemma \ref{lem_dl}.
We first rewrite \eqref{DxV} as follows:

\noi
\begin{align*}
 D_{\xi_0} V(\xi)  =  \sum_{n=0}^\infty I_n\big( g_{\xi_0, n}(\xi, \bul) \big)
 \quad
 {\rm with}
 \quad
 g_{\xi_0, n}(\xi, \bul) := (n+1) h_{n+1}(\xi, \xi_0, \bul).
\end{align*}
 
\noi
It is not difficult to see that the canonical symmetrization of $ g_{\xi_0, n}$ is given by 

\noi
\begin{align}\label{syms}
\begin{aligned}
\wt  g_{\xi_0, n}(\xi_{n+1},  \xi_1, ... , \xi_n) 
&= \sum_{j=1}^{n+1} h_{n+1}(\xi_j,  \pmb{\xi_{\neq j}}) \\
&= \bigg(  \sum_{j=0}^{n+1} h_{n+1}(\xi_j,  \pmb{\xi_{\neq j}})  \bigg) - h_{n+1}(\xi_0, \xi_1, ... , \xi_{n+1}) \\
&= (n+2) \wt{h}_{n+1}(\xi_0, \xi_1, ... , \xi_{n+1}) - h_{n+1}(\xi_0, \xi_1, ... , \xi_{n+1}),
\end{aligned} 
\end{align}

 \noi
 where 
 $\pmb{\xi_{\neq j}}$ stands for the $(n+1)$ arguments 
 $(\xi_0, ...,  \xi_{j-1},\xi_{j+1}, ...,  \xi_{n+1})$.
 Therefore, it follows from \eqref{syms} that 
 
 \noi
 \begin{align*}
 \sum_{n\geq 1} n! \| \wt  g_{\xi_0, n-1} \|^2_{\fH^{\otimes n}}
 \leq 
 2  \sum_{n\geq 1} n! 
 \Big[   (n+1)^2 \|  \wt{h}_{n}(\xi_0, \bul) \|^2_{\fH^{\otimes n}}
  + \| h_{n}(\xi_0, \bul) \|^2_{\fH^{\otimes n}} \Big]   ,
 \end{align*}
 
 \noi
 which is integrable with respect to $\fm(d\xi_0)$ in view of \eqref{cond_H2} and \eqref{cond_H3}.
 Thus, $D_{\xi_0} V\in\dom(\dl)$
 for $\fm$-almost every $\xi_0\in\mathbf{Z}$ and
 \begin{align}\label{Dx0V}
 \dl\big( D_{\xi_0}V \big) = \sum_{n=1}^\infty I_n\big( \wt  g_{\xi_0, n-1}  \big).
 \end{align}
Hence the desired commutation relation $D_{\xi_0} \dl(V) = \dl( D_{\xi_0}V) + V(\xi_0)$
follows immediately 
from \eqref{Dxdl}, \eqref{Dx0V}, \eqref{227}, and \eqref{syms}. \qedhere
 
\end{proof}

\begin{remark}\rm \label{rem_LMS}
 
The authors of   \cite{LMS23} established the  commutation relation \eqref{com_rel} 
under a different set of conditions, among which 
there is (in our notation)
\[
\E \int_{\mathbf{Z}^3} \big( D^+_{\xi_1}  D^+_{\xi_2} V(\xi_3) \big)^2 \fm(d\xi_1) \fm(d\xi_2) \fm(d\xi_3)  <\infty;
\]
see Lemma 3.3 therein. 
In our Lemma \ref{lem_Heis}, we do not need the above condition 
on the second-order difference  operator  and thus provide 
a set of  more relaxed conditions for  \eqref{com_rel} to hold. 
This minor difference is important
 in our application of Lemma \ref{lem_Heis},
when we only know that  the solution $u(t,x)$ to \eqref{SWE} is Malliavin differentiable
(but not necessarily  twice Malliavin differentiable).
See, in particular, the application of Lemma \ref{lem_Heis}
in the proof of Proposition \ref{prop:diff}.
  
\end{remark}

\subsection{Predictability}\label{SEC2_3}

In this section, we include some auxiliary  results related to predictability.
 We start by recalling the following definition of modification.

\begin{definition}

If $X=\{X(t,x)\}_{(t,x)\in \R_{+} \times \R^d}$ and $Y=\{Y(t,x)\}_{(t,x)\in \R_{+} \times \R^d}$
 are two processes defined on the same probability space $(\O, \F,\bP)$, 
 we say that $X$ and $Y$ are   modifications of each other if
$\bP\big(X(t,x)=Y(t,x)\big)=1$ 
for     all $(t,x) \in \R_{+} \times \R^d$.
 \end{definition}

\begin{lemma}
\label{pred-all}
Let  $X=\{X(t,x)\}_{(t,x)\in \R_{+} \times \R^d}$ be a  process 
defined on a probability space $(\O,\F,\bP)$ such that 
 for any $T>0$ and for any compact set $K \subset \R^d$, $X\bv_{[0,T]\times K}$ 
 has a jointly measurable {\rm(}respectively, predictable{\rm)} modification.
 Then, $X$ has a jointly measurable 
 {\rm(}respectively, predictable{\rm)} modification on $\R_{+}\times \R^d$.
\end{lemma}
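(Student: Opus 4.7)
The plan is a straightforward exhaustion-and-patching construction. I choose the compact exhaustion $E_n := [0, n] \times [-n, n]^d$ of $\R_+ \times \R^d$ (with $E_0 := \emptyset$), and by hypothesis pick, for each $n \geq 1$, a jointly measurable (resp.\ predictable) modification $X^{(n)}$ of $X|_{E_n}$, which I extend by $0$ to all of $\Omega \times \R_+ \times \R^d$.

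Setting $A_n := E_n \setminus E_{n-1}$, the family $\{A_n\}_{n \geq 1}$ is a Borel partition of $\R_+ \times \R^d$, so I define
\begin{equation*}
\wt X(\omega, t, x) := \sum_{n \geq 1} X^{(n)}(\omega, t, x) \, \ind_{A_n}(t, x),
\end{equation*}
which assigns a single unambiguous value at each $(t,x)$ (exactly one term is nonzero). For any fixed $(t, x)$, the unique index $n$ with $(t, x) \in A_n$ satisfies $(t, x) \in E_n$, so $X^{(n)}(t,x) = X(t,x)$ almost surely; hence $\wt X(t,x) = X(t,x)$ almost surely, i.e.\ $\wt X$ is a modification of $X$.

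For the measurability (resp.\ predictability) of $\wt X$, it suffices to check that each $A_n$, viewed as a subset of $\Omega \times \R_+ \times \R^d$ constant in $\omega$, is Borel (resp.\ predictable). Borel-ness is immediate; for predictability, I decompose
\begin{equation*}
A_n = \big([0, n-1] \times ([-n, n]^d \setminus [-n+1, n-1]^d)\big) \cup \big((n-1, n] \times [-n, n]^d\big),
\end{equation*}
and observe that each piece is a finite union of sets of the form $\Omega \times (a, b] \times B$, which are elementary predictable rectangles. Once $\Omega \times A_n$ is predictable, $X^{(n)} \ind_{A_n}$ is predictable, and so is the countable sum $\wt X$.

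The only minor subtlety, and thus the main technical point worth flagging, is the singleton $\{t=0\}$-slice appearing in the first piece of the above decomposition: it is not of the form $(a,b]$. This is easily handled either by interpreting the predictable $\sigma$-field to include sets of the form $F_0 \times \{0\} \times B$ with $F_0 \in \F_0$ (the standard convention), or, more robustly, by first running the above argument on $(0, \infty) \times \R^d$ using $E_n := (0, n] \times [-n, n]^d$ and then patching the $t=0$ fibre by setting $\wt X(0, \cdot) := X(0, \cdot)$, which is unproblematic since a single time-slice does not affect predictability. Beyond this bookkeeping the argument is a routine diagonalization of local modifications.
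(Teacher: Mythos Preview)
Your proof is correct and follows the same compact-exhaustion strategy as the paper; the difference lies only in how the local modifications are glued. The paper takes the nested family $\wt X_m := X^{(m)}\ind_{[0,m]\times E_m}$ (with $E_m=\{\|x\|\le m\}$) and defines the global modification as the pointwise limit $\wt X:=(\lim_m \wt X_m)\,\ind_A$, where $A$ is the (measurable, resp.\ predictable) set on which this limit exists; on the almost-sure event $\bigcap_m\{\wt X_m(t,x)=X(t,x)\ind_{[0,m]\times E_m}(t,x)\}$ the sequence eventually stabilises at $X(t,x)$, so $\wt X$ is a modification. Your version replaces the limit by an explicit disjoint partition into annular shells $A_n=E_n\setminus E_{n-1}$ and sums, which is slightly more direct but forces you to check that each $A_n$ lies in the predictable $\sigma$-field---hence your $t=0$ discussion. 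The paper's limit construction needs the same fact (that $\Omega\times[0,m]\times E_m$ is predictable) to make $\wt X_m$ predictable, so the $t=0$ bookkeeping you flag is present in both arguments; you simply treat it more carefully.
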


\begin{proof}
We  only prove  the case where $X$ is jointly measurable, 
while  the same argument works  for the case where  $X$ is predictable.

Let $E_m = \{ \| x\| \leq m\}$ for $m\geq 1$
and   let $X_m'$ be the jointly measurable modification of $X$ on $[0,m]\times E_m$. 
Then $\wt{X}_m:=X_m' \ind_{[0,m]\times E_m}$ is a jointly measurable modification of 
$X_m:=X \ind_{[0,m]\times E_m}$. 
Therefore, for   all $(t,x)\in \R_+ \times \R^d$, the event 
\[
\O_{t,x} = \bigcap_{m\geq 1}\{\wt{X}_m(t,x) = X_m(t,x)  \}   \]

\noi
 has probability $1$
 and on $\O_{t,x}$, one has
 
 \noi
 \begin{align} \label{lim_Otx}
 \lim_{m\to \infty}\wt{X}_m(t,x) = \lim_{m\to \infty}X_m(t,x)=X(t,x).
\end{align}

\noi
That is, the above limit \eqref{lim_Otx} takes place almost surely
for every $(t,x)\in\R_+\times\R^d$.

Now define 
$A=\{(\o,t,x)\in \O \times \R_{+}\times \R^d: 
\lim_{m\to \infty} \wt{X}_m(\o,t,x) \, \mbox{exists}\}$.
 Then, for   all $(t,x)\in \R_{+}\times \R^d$, the event  
 $\{\o \in \O: (\o,t,x)\in A^c\} \subset \O^c_{t,x}$ has probability $0$,
 so that  by Fubini theorem, the set $A$ has the full measure
 with $\ind_A(\o,t,x) = 1$ almost surely for every $(t,x)$,
  and moreover,  we can deduce from \eqref{lim_Otx} that 
 
 \noi
 \begin{align}
 \wt{X}(\o, t, x) 
:&=\lim_{m\to \infty} \wt{X}_m(\o, t, x) \ind_A(\o, t, x) \label{Otx1} \\
&=  X(\o, t, x) \ind_A(\o, t, x),  \label{Otx2}
 \end{align}
 
 \noi
where  the equality \eqref{Otx2} holds for every $(t,x)\in\R_+\times\R^d$ 
and for  $\bP$-almost every $\o\in\O$.
By definition \eqref{Otx1}, we easily see that    
$\wt{X}$ is jointly measurable and $\wt{X}(t,x)=X(t,x)$ almost surely
for   all $(t,x) \in \R_{+}\times \R^d$. 
Hence, $\wt{X}$ is a jointly measurable
modification of $X$.
\qedhere

\end{proof}

\begin{lemma}
\label{lem_pred}
Let $X= \{ X(t,x)\}_{(t,x)\in\R_+\times\R^d}$ 
be a collection of   random variables in $\dom(D)$ 
such that for any finite $T >0$,

\noi
\begin{align} \label{sup_DX}
\sup_{t\leq T}\sup_{x\in\R^d}  \| DX(t,x) \|_{L^2(\O; \fH)} < \infty.
\end{align}

\noi
Assume that  the map $(t,x) \mapsto X(t,x)$ is $L^2(\O)$-continuous 
on $\bR_{+}\times\R^d$. Then the following statements hold.

\smallskip
\noi
{\rm (i)} For $\fm$-almost every $\xi\in\mathbf{Z}$,
  $D_\xi X$ has a jointly measurable modification.
  
  \smallskip
\noi
{\rm (ii)}
   If in addition, $X$ is $\mathbb{F}$-adapted {\rm(}i.e., $X(t,x)$ is $\F_t$-measurable 
   for any $(t,x)\in\R_+\times \R^d${\rm)}, 
   then this modification is $\mathbb{F}$-predictable {\rm(}and adapted as well{\rm)}.

  \smallskip
\noi
{\rm (iii)}    $\{D_\xi X(t,x): (t, x, \xi)\in   \R_+\times\R^d \times \mathbf{Z} \}$
is jointly measurable.

\end{lemma}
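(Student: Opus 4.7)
The plan is to prove (iii) first by a direct construction using the add-one cost operator $D^+$ (Lemma~\ref{rem_BZ-ii}); this will simultaneously give (i) by slicing, and (ii) will follow by refining the construction under adaptedness via a left-temporal averaging. Since $X$ is $L^2$-continuous (hence continuous in probability), Doob's classical theorem yields a jointly measurable modification of $X$ on $\Omega\times\R_+\times\R^d$. Moreover, each $X(\cdot,t,x)$ is $\sigma\{N\}$-measurable, so a Doob--Dynkin-type selection provides a jointly measurable functional $\phi:\mathbf{N}_\sigma\times\R_+\times\R^d\to\R$ with $X(\omega,t,x)=\phi(N(\omega),t,x)$. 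Defining
\[
\widetilde D_\xi X(\omega,t,x):=\phi\big(N(\omega)+\delta_\xi,\,t,x\big)-\phi\big(N(\omega),t,x\big),
\]
this is jointly Borel in $(\omega,t,x,\xi)$ because the map $(\omega,\xi)\mapsto N(\omega)+\delta_\xi\in\mathbf{N}_\sigma$ is measurable by the very definition of $\mathscr{N}_\sigma$. By Lemma~\ref{rem_BZ-ii}(i), $\widetilde D_\xi X(\omega,t,x) = D_\xi X(\omega,t,x)$ as elements of $L^2(\Omega\times\mathbf{Z})$ for each $(t,x)$. This establishes (iii), and (i) follows by slicing at each fixed $\xi$: outside an $\fm$-null set obtained via Fubini from the $\bP\otimes\fm$-a.e.\ agreement, the slice $(\omega,t,x)\mapsto\widetilde D_\xi X(\omega,t,x)$ is a jointly measurable modification of the process $\{D_\xi X(t,x)\}$.

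For (ii), assume $X$ is $\mathbb{F}$-adapted. Then $X(\cdot,t,x)$ is $\F_t$-measurable, so $\phi(\cdot,t,x)$ depends only on the restriction of $N$ to $[0,t]\times\R\times\R_0$. Consequently, $\widetilde D_\xi X(\omega,t,x)=0$ whenever $\xi=(r,y,z)$ satisfies $r>t$, and $\widetilde D_\xi X$ is $\F_t$-measurable in $\omega$ for $r\le t$, which yields adaptedness in $(t,x)$. To upgrade to predictability, I would apply the preceding construction to the left-temporal averaging $X_n(t,x):=n\int_0^{1/n}X((t-s)_+,x)\,ds$, which is $L^2$-left-continuous in $t$ and hence produces a left-continuous adapted (thus $\mathbb{F}$-predictable) jointly measurable version of $D_\xi X_n$. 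Passing $n\to\infty$ chaos-by-chaos via Lemma~\ref{lem_1}(i), and extracting a subsequence using the uniform bound \eqref{sup_DX} to control the chaos tails, preserves predictability in the limit; finally Lemma~\ref{pred-all} upgrades from compact rectangles $[0,T]\times K$ to $\R_+\times\R^d$.

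The main obstacle lies in the predictability upgrade of (ii). The $D^+$ construction gives joint measurability and adaptedness cleanly, but obtaining left-continuity requires passing to the limit in the mollified $X_n$, and $(t,x)\mapsto DX(t,x)$ is not known to be $L^2(\Omega;\fH)$-continuous --- only each chaos component $DJ_k X$ is, via Lemma~\ref{lem_1}(i). The uniform bound \eqref{sup_DX} is therefore essential: it ensures compactly uniform tail control on the chaos expansion and enables a diagonal subsequence extraction that preserves both joint measurability and predictability in the limit $n\to\infty$.
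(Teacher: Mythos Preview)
Your approach for (iii) and (i) via the add-one cost operator is elegant and genuinely different from the paper's, which proceeds chaos-by-chaos using the identity $\|DJ_kF\|_{L^2(\Omega;\fH)}^2 = k\|J_kF\|_2^2$. However, the ``Doob--Dynkin-type selection'' of a \emph{jointly} measurable $\phi$ is a real gap: standard Doob--Dynkin gives $\phi_{t,x}$ for each fixed $(t,x)$, and the jointly measurable modification of $X$ obtained from $L^2$-continuity need not be $\sigma\{N\}\otimes\mathcal{B}(\R_+\times\R^d)$-measurable. This can be salvaged by working on the canonical Poisson space $(\Omega,\F)=(\mathbf{N}_\sigma,\mathscr{N}_\sigma)$, which the paper implicitly permits, but you should say so explicitly.

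The more serious gap is in (ii). Your temporal averages $X_n$ converge to $X$ in $L^2(\Omega)$ uniformly on compacts, and Lemma~\ref{lem_1}(i) gives $DJ_kX_n\to DJ_kX$ in $L^2(\Omega;\fH)$ for each fixed $k$. But to pass the predictability of $\widetilde D_\xi X_n$ to the limit you need $D_\xi X_n\to D_\xi X$ in $L^2(\Omega\times[0,T]\times K)$ for $\fm$-a.e.\ $\xi$, i.e., strong convergence of $DX_n(t,x)\to DX(t,x)$ in $L^2(\Omega;\fH)$ at least in an $L^2([0,T]\times K)$ sense. The uniform bound \eqref{sup_DX} does \emph{not} give uniform chaos-tail control over $(t,x)$: $\sup_{(t,x)}\sum_k k\|J_kX(t,x)\|_2^2<\infty$ does not imply $\sup_{(t,x)}\sum_{k>K_0}k\|J_kX(t,x)\|_2^2\to 0$ as $K_0\to\infty$, so the diagonal extraction you allude to cannot be carried out. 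Nor can you transfer predictability of the full $\widetilde D_\xi X_n$ to its chaos projections $D_\xi J_k X_n$ without further argument. The paper avoids this by reversing the order: it first restricts to a single chaos (where $\|DJ_kF\|=\sqrt{k}\|J_kF\|$ transfers $L^2$-continuity of $X$ directly to the derivative), builds a piecewise-constant discretization \eqref{pred2} whose derivative is transparently predictable with geometric $L^2$-convergence, and only afterwards sums the chaos components using orthogonality---here \eqref{sup_DX} is used merely to ensure the series $\sum_k D_\xi J_kX$ converges in $L^2(\Omega\times[0,T]\times K)$ for $\fm$-a.e.\ $\xi$, which is a much weaker requirement than uniform tail control.
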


Part (iii) is used in the estimation of $\Var(\mathbf{B_2})$ in Section \ref{SEC5_2},
while the other two results are utilized in the proof of Proposition \ref{prop:diff}.

\begin{proof}[Proof of Lemma \ref{lem_pred}]
We use the same approximation as in the proof of \cite[Proposition B.1]{BQS}.
Note that this reference  studies the adaptedness and predictability of a 
(stochastically continuous) random field,
while we focus on the Malliavin derivative of a field $\{D_\xi X(t,x)\}_{(t,x)\in\R_+\times\R^d}$,
and the random field 
 $\{D_\xi X(t,x)\}_{( t,x, \xi )\in   \R_+\times\R^d \times \mathbf{Z}}$.

Fix $T>0$ and a compact set $K \subset \bR^d$. 
By Lemma \ref{pred-all}, it suffices to show that $D_{\xi}X$ has a jointly measurable (or predictable) modification on the set $[0,T] \times K$.
In the following, we split the proof in two steps.

\smallskip

$\bul$  {\bf Step 1.} Suppose first that $X(t,x) \in \mathbb{C}_k$ for all $(t,x)$
and  for some fixed $k\geq 1$.  
Due to compactness, 
the map $(t,x)\in [0,T]\times K \mapsto X(t,x)\in L^2(\O)$ is uniformly continuous. 
Thus, for any $n\geq 1$, there exists $\dl_n>0$ such that 
for all $(t,x),(s,y)\in  [0,T]\times K$ with $|t-s|^2+|x-y|^2 \leq \dl_n$,
it holds that

\noi
\begin{align}\label{pred1}
\E\big[|X(t,x)-X(s,y)|^2\big] \leq \frac{1}{2^n}.
\end{align}

\noi
Let $\{0=t_0^{(n)}<t_1^{(n)}<\ldots< t_{J_n}^{(n)}=T\}$ 
be a partition of $[0,T]$ into subintervals of length smaller than $\dl_n$ 
and let  $(U_{\ell}^{(n)})_{\ell=1,\ldots, L_n}$ be a partition of $K$
 into (nonempty) Borel subsets with diameter less than $\dl_n$. 
 Let $x_{\ell}^{(n)} \in U_{\ell}^{(n)}$ be arbitrary for $\ell=1,\ldots,L_n$
 and we define 
  
\noi
\begin{align}\label{pred2}
X_n(t,x)=   X(0,x)\ind_{\{ t=0 \}} + 
\sum_{j=1}^{J_n}\sum_{\ell=1}^{L_n}X(t_j^{(n)},x_{\ell}^{(n)})
\ind_{(t_{j}^{(n)},t_{j+1}^{(n)}] \times U_{\ell}^{(n)}}(t,x).
\end{align}

\noi
It is immediate that $X_n$ is a simple and  jointly measurable  process    such that 

\noi
\begin{align}
\label{E-Xn}
\E\big[|X_n(t,x)-X(t,x)|^2\big]   \leq \frac{1}{2^n}
\end{align}
for every $(t,x)\in [0, T]\times K$,
in view of \eqref{pred2} and \eqref{pred1}. 
It is also clear that when   $X$ is adapted, 
then the process  $X_n$ is predictable.

 Note that for any $\xi \in \mathbf{Z}$,
\[
D_{\xi}X_n(t,x)= D_\xi X_0(t,x) + 
\sum_{j=1}^{J_n}\sum_{\ell=1}^{L_n}D_{\xi}X(t_j^{(n)},x_{\ell}^{(n)}) 
\ind_{(t_{j}^{(n)},t_{j+1}^{(n)}] \times U_{\ell}^{(n)}}(t,x) 
\]
and thus,
 $D_{\xi}X_n$ is jointly measurable. 
 If  $X$ is additionally assumed to be  adapted, 
 then for any $t \in (t_j^{(n)},t_{j+1}^{(n)}]$,  
 $X(t_j^{(n)},x_{\ell}^{(n)})$ is $\F_t$-measurable, and thus, 
 $D_{\xi}X(t_j^{(n)},x_{\ell}^{(n)})$ is   $\F_t$-measurable 
 by Lemma \ref{rem_BZ-iv}.
 As a result,   $D_{\xi}X_n$ is predictable for any $\xi \in \mathbf{Z}$
and
 $\{D_\xi X_n(t,x)\}_{(t, x, \xi)\in \R_+\times\R^d\times\mathbf{Z}}$
is clearly  jointly measurable.

Since $X_n(t,x)$ and  $X(t,x)$ live in  $\mathbb{C}_k$,  
one can apply  \eqref{lem_XY} (with $F\in\mathbb{C}_k$)     
and \eqref{E-Xn} to obtain 
\[
\E\big[\|DX_n(t,x)-DX(t,x) \|_{\fH}^2\big]=k \E\big[|X_n(t,x)-X(t,x)|^2\big]\leq \frac{k}{2^n}
\]

\noi
for any $(t, x)\in [0, T]\times K$,
and furthermore, 
\[
\sum_{n\geq 1}\int_0^T \int_{K}\E\big[\|DX_n(t,x)-DX(t,x) \|_{\fH}^2\big]dxdt <\infty.
\]
By Fubini's theorem, we can rewrite the above inequality as follows:
\[
\int_{\mathbf{Z}}\sum_{n\geq 1}
\E\left(\int_0^T \int_{K} |D_{\xi}X_n(t,x)-D_{\xi}X(t,x)|^2dxdt \right) \fm(d\xi)
<\infty. 
\]
It follows that for $\fm$-almost all $\xi \in \mathbf{Z}$,

\noi
\begin{align*} 
\text{$D_{\xi}X_n$ converges to   $D_{\xi}X$ 
in $L^2(\Omega \times [0,T]\times K)$ as $n\to \infty$}. 
\end{align*}

\noi
Then, along a subsequence $(n_k\uparrow +\infty)$,
 we have for almost every $(\omega,t,x) \in \Omega \times [0,T]\times K$,
\[
D_{\xi}X_{n_k}(\o,t,x) \xrightarrow{n_k\to\infty} D_{\xi}X(\o,t,x).
\]
 
\noi
Then $D_{\xi}X$ has a jointly measurable modification 
(and a predictable modification, if $X$ is additionally  adapted). 
Moreover, 
$\{D_\xi X(t,x): (t, x, \xi)\in \R_+\times\R^d\times\mathbf{Z}\}$
is jointly measurable
as an almost everywhere limit 
of a sequence of jointly measurable random fields. 

\smallskip

$\bul$  {\bf Step 2.}   Consider now the general case. 
For any $(t,x)\in [0,T]\times K$, we have the chaos expansion 
$X(t,x)=\sum_{k\geq 0}X_k(t,x)$ with $X_k(t,x)\in \mathbb{C}_k$, and
$D_{\xi}X(t,x)=\sum_{k\geq 1}D_{\xi}X_k(t,x)$. 
The terms are orthogonal in both series. 
By  the assumption \eqref{sup_DX} and Fubini's theorem,
we have 
\[
\int_{\mathbf{Z}} \int_0^T \int_{K} 
\E\big[|D_{\xi}X(t,x)|^2\big] dxdt\fm(d\xi)=
\int_0^T \int_{K} 
\E\big[\|DX(t,x)\|_{\fH}^2\big] dxdt<\infty.
\]
It follows from orthogonality  relation \eqref{orth_rel}  that 
for $\fm$-almost all $\xi \in \mathbf{Z}$, 
\[
\sum_{k\geq 1}
\int_0^T \int_{K} 
\E\big[|D_{\xi}X_k(t,x)|^2\big] dxdt=\int_0^T \int_{K} 
\E\big[|D_{\xi}X(t,x)|^2\big] dxdt<\infty,
\]
 
 \noi
 and therefore, 
\begin{align} \label{pred3}
 \text{$\sum_{k=1}^{n}D_{\xi}X_k$ converges to $D_\xi X$ 
 in $L^2(\O \times [0,T]\times K)$
as $n\to \infty$}. 
\end{align}

By {\bf Step 1}, $\sum_{k=1}^{n}D_{\xi}X_k$ 
has a jointly measurable modification 
(and a predictable modification, if $X$ is adapted),
and $\{\sum_{k=1}^{n}D_{\xi}X_k(t,x): (t, x, \xi)\in    \R_+\times\R^d \times   \mathbf{Z}\}$
has a jointly measurable modification.
Hence, the same thing can be said about $D_{\xi}X$ 
by taking an almost everywhere convergent subsequence 
in \eqref{pred3}.

\smallskip

Hence, the proof is completed. 
\qedhere

\end{proof}

\section{Malliavin differentiablity of the solution}
\label{section-Du}

In this section, we present several results about the Malliavin differentiability 
 of the solution to  \eqref{SWE}, including the key estimate  \eqref{key}.
More precisely,  we prove in Proposition  \ref{prop:diff}
the Malliavin differentiability  of the solution $u(t,x)$
and then show the Malliavin derivative satisfies 
 a certain integral equation,
 which serves as the very first step to prove our  Theorem \ref{thm_main}.
 Note that Proposition  \ref{prop:diff} substantially  improves  
 the main result of \cite{BN17}, which was obtained only
  for  $\s(u)=au+b$. 
  The key ingredients that lead to this improvement
  is the  commutation relation \eqref{com_rel} in   Lemma \ref{lem_Heis}
and  the utilization  of the integral equation  \eqref{mild}
  or equivalently \eqref{eq_u} below.

\begin{proposition} \label{prop:diff}
Assume $m_2\in(0,\infty)$ as in  \eqref{m2}.
Let $u$ solve the equation \eqref{SWE}. 
Then, $u(t,x)\in\dom(D)$ for any $(t,x)\in\R_+\times\R$
with
\begin{align}\label{uniD2}
 \sup_{t\leq T}\sup_{x\in\R}  \| Du(t,x) \|_{L^2(\O;\fH)}   < \infty
\end{align}

\noi
for any finite $T> 0$. 
Moreover, for $\fm$-almost every $\xi =(r, y, z)\in\mathbf{Z}$,
$D_\xi u$ admits a predictable modification
{\rm(}still denoted by $D_\xi u${\rm)}
and 
  the following  commutation relation holds:

\noi
\begin{align} 
\begin{aligned} \label{eq_MD}
D_{r, y, z} u(t,x) 
&= G_{t-r}(x-y)z  \s( u(r, y)) \\
&\qquad + \int_0^t \int_{\R} \int_{\R_0} G_{t-s_1}(x-y_1)  z_1 D_{r, y, z} \s\big( u(s_1, y_1) \big)   
\wh{N}(ds_1, dy_1, dz_1).
\end{aligned}
\end{align}

\end{proposition}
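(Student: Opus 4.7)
I would proceed via Picard iteration. Set $u_0(t,x) := 1$ and, for $n \geq 0$,
\begin{align*}
u_{n+1}(t,x) := 1 + \int_0^t \int_\R \int_{\R_0} G_{t-s}(x-y)\, z\, \sigma(u_n(s,y))\, \wh{N}(ds, dy, dz).
\end{align*}
Standard existence theory for \eqref{SWE} gives $u_n(t,x) \to u(t,x)$ in $L^2(\Omega)$ uniformly on $[0,T]\times\R$. The idea is to (a) show inductively that each $u_n(t,x) \in \dom(D)$ by unwinding the stochastic integral via Heisenberg's commutation, (b) obtain an $n$-uniform $L^2(\Omega;\fH)$-bound on $Du_n(t,x)$ by It\^o isometry plus a Gronwall-type iteration, (c) use the weak-convergence criterion of Lemma \ref{lem_1}-(ii) to transfer Malliavin differentiability to the limit, and (d) pass to the limit in the recurrence equation for $Du_n$ to obtain \eqref{eq_MD}. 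Predictability of a modification of $D_\xi u$ will then follow from Lemma \ref{lem_pred}.

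\textbf{Inductive differentiability and the recurrence for $Du_n$.} Assuming $u_n(s,y) \in \dom(D)$ admits a predictable modification (Lemma \ref{lem_pred}-(ii)), the difference rule (Lemma \ref{chain}, Remark \ref{rem_BZ-iii}) gives $\sigma(u_n(s,y)) \in \dom(D)$ with $|D_{\xi_0}\sigma(u_n(s,y))| \leq \Lip(\sigma)|D_{\xi_0} u_n(s,y)|$. The predictable integrand $V_n(\xi; t,x) := G_{t-s}(x-y)\, z\, \sigma(u_n(s,y))$ is then Skorohod integrable (Lemma \ref{lem25}-(ii)) and its Skorohod integral agrees with the It\^o integral in the definition of $u_{n+1}$. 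Applying Heisenberg's commutation relation (Lemma \ref{lem_Heis}) yields
\begin{align*}
D_{\xi_0} u_{n+1}(t,x) = G_{t-r}(x-y_0)\, z_0\, \sigma(u_n(r,y_0))
 + \int_0^t\!\!\int_\R\!\!\int_{\R_0} G_{t-s}(x-y)\, z\, D_{\xi_0}\sigma(u_n(s,y))\, \wh{N}(ds,dy,dz),
\end{align*}
for $\fm$-a.e. $\xi_0 = (r,y_0,z_0)$. Hypothesis \eqref{cond_H1} is verified from the Step 2 bound below. Taking $L^2$-norms, integrating over $\xi_0$ with Fubini, and invoking the It\^o isometry (Lemma \ref{lem25}-(iv)) together with $\int_\R G_{t-s}^2(x-y)\,dy = (t-s)/2$ and \eqref{KTPs}, I would obtain
\begin{align*}
\|Du_{n+1}(t,x)\|_{L^2(\Omega;\fH)}^2 \leq C_{T,\sigma} + m_2\,\Lip^2(\sigma)\! \int_0^t (t-s)\, \sup_{y\in\R}\|Du_n(s,y)\|_{L^2(\Omega;\fH)}^2\, ds.
\end{align*}
A Gronwall iteration on $\phi_n(t) := \sup_x \|Du_n(t,x)\|_{L^2(\Omega;\fH)}^2$ yields $\sup_n \sup_{t\leq T} \phi_n(t) < \infty$.

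\textbf{Passing to the limit.} With $u_n \to u$ uniformly in $L^2(\Omega)$ and the $n$-uniform $L^2(\Omega;\fH)$-bound just established, Lemma \ref{lem_1}-(ii) gives $u(t,x) \in \dom(D)$, the estimate \eqref{uniD2}, and the weak convergence $Du_n(t,x) \rightharpoonup Du(t,x)$ in $L^2(\Omega;\fH)$. Since $u$ is $\mathbb{F}$-adapted and $L^2$-continuous, Lemma \ref{lem_pred}-(ii) then provides the predictable modification of $D_\xi u$ claimed in the statement.

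\textbf{Main obstacle.} The delicate step is extracting the pointwise-in-$\xi_0$ identity \eqref{eq_MD} from the recurrence for $Du_n$, because only weak convergence of $Du_n$ is available, which does not by itself justify passage to the limit inside a Skorohod integral. My plan is to test both sides of the recurrence for $Du_n$ against $\psi(\xi_0) \in L^2(\mathbf{Z},\fm)$, use Fubini to move the Skorohod integral outside, and combine weak convergence with the Step 2 bound (acting as uniform integrability) to identify the weak limit. This produces the integrated identity for all $\psi$ in a dense subset of $L^2(\mathbf{Z},\fm)$; the pointwise statement for $\fm$-a.e. $\xi_0$ is then recovered by a differentiation-of-Radon-measures argument on $\mathbf{Z}$. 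Crucially, this strategy relies on Lemma \ref{lem_Heis} with its relaxed hypotheses (cf.\ Remark \ref{rem_LMS}), since we only know $u(t,x) \in \dom(D)$ and not $u(t,x) \in \dom(D^2)$; it also relies on the Lipschitz bound $|D_{\xi_0}\sigma(u_n) - D_{\xi_0}\sigma(u)| \leq \Lip(\sigma)\,|D_{\xi_0}(u_n - u)|$ afforded by the difference rule. The same scheme is the natural precursor to the key estimate \eqref{key}, where $L^2$ is replaced by $L^p$ via Rosenthal's inequality (Proposition \ref{prop_Rose}) under the stronger assumption $m_p < \infty$.
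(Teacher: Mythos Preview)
Your setup through the uniform bound and the passage $u(t,x)\in\dom(D)$ via Lemma~\ref{lem_1}-(ii), followed by Lemma~\ref{lem_pred} for the predictable modification, is exactly the paper's argument. The divergence is in how you obtain \eqref{eq_MD}.

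You identify the main obstacle as passing to the limit inside the It\^o integral in the recurrence for $Du_n$, and you propose to circumvent weak convergence by testing against $\psi(\xi_0)$ and finishing with a differentiation-of-Radon-measures argument. The paper avoids this entirely: once $u(t,x)\in\dom(D)$ and the bound \eqref{uniD2} are in hand, it applies Heisenberg's commutation relation (Lemma~\ref{lem_Heis}) \emph{directly to the limit equation} $u(t,x)=1+\delta(V_{t,x})$, where $V_{t,x}(r,y,z)=G_{t-r}(x-y)\,z\,\sigma(u(r,y))$. Condition \eqref{cond_H1} is checked using only \eqref{uniD2} and the Lipschitz bound $|D_{\xi_0}\sigma(u(r,y))|\leq\Lip(\sigma)|D_{\xi_0}u(r,y)|$, and \eqref{eq_MD} falls out immediately as $D_{\xi_0}\delta(V_{t,x})=\delta(D_{\xi_0}V_{t,x})+V_{t,x}(\xi_0)$. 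This is precisely where the relaxed hypotheses of Lemma~\ref{lem_Heis} (cf.\ Remark~\ref{rem_LMS}) matter---not in the recurrence for $Du_n$, but in applying Heisenberg to $u$ itself without knowing $u\in\dom(D^2)$.

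Two minor corrections to your proposal: the bound $|D_{\xi_0}\sigma(u_n)-D_{\xi_0}\sigma(u)|\leq\Lip(\sigma)|D_{\xi_0}(u_n-u)|$ is not quite right, since the add-one-cost structure gives an extra $|u_n-u|$ term; and the Radon-measure differentiation argument does appear in the paper, but in the proof of the $L^p$ estimate \eqref{Z1} (Proposition~\ref{prop_MD}), not in Proposition~\ref{prop:diff}.
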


\begin{proof}

Let  $(u_n)_{n\geq 0}$ be the sequence of Picard iterations, defined as follows:
$u_0(t,x) =1$ and 
\noi
\begin{align}
\label{nPicard}
u_{n+1}(t,x) = 1 + \int_0^t\int_\R G_{t-s}(x-y) \s\big( u_{n}(s, y) \big) L(ds, dy), 
\quad \mbox{for all} \quad n \geq 0. 
\end{align}

\noi
Via an induction argument on\footnote{In \cite[Theorem 9]{BN17}, (v) is not included in the induction 
step (\pmb{P}), but this addendum would not affect the proof therein.  }
\begin{align}\label{propP}
\textup{(\pmb{P})}
\quad
\begin{cases}
&{\rm(i)} \,\,\, \text{$u_n(t,x)\in  L^2(\O)$   for any $(t,x)\in\R_+\times\R$} \\
&{\rm(ii)} \,\,\, \text{$\sup\{ \| u_n(t,x)\|_2 : t\leq T, x\in\R\} <\infty$ for any finite $T$    } \\
&{\rm(iii)} \,\,\, \text{$(t,x)\in\R_+\times\R\mapsto u_n(t,x)\in L^2(\O)$ is continuous} \\
&{\rm(iv)} \,\,\, \text{$u_n(t,x)$ is $\mathcal{F}_t$-measurable for any $(t,x)\in\R_+\times\R$ } \\
&{\rm(v)} \,\,\, \text{$u_n$ has a $\mathbb{F}$-predictable modification, } 
\end{cases}
\end{align}

\noi
one can   show that there is a (predictable) random field $\{u(t,x)\}_{(t, x)\in\R_+\times\R}$ such that 
for any finite $T>0$,

\noi
\begin{align}\label{unic3}
\lim_{n\to\infty} \sup_{t\leq T} \sup_{x\in\R} \| u_n(t,x) - u(t,x) \|_2  = 0,
\end{align}

\noi
and moreover, $u$ is the unique solution to \eqref{SWE};
see \cite[Theorem 9]{BN17} for more details. 
By running another induction argument on 

\noi
\begin{align}\label{propQ}
\textup{(\pmb{Q})}
\quad
\begin{cases}
&{\rm(i)} \,\,\, \text{$u_n(t,x)\in  \dom(D)$   for any $(t,x)\in\R_+\times\R$} \\
&{\rm(ii)} \,\,\, \text{$\displaystyle 
\sup\big\{ \| Du_n(t,x)\|_{L^2(\O;\fH)} : (t, x)\in[0,T]\times\R\big\}  <\infty$
 for any finite $T$,    } 
\end{cases}
\end{align}
 
 \noi
 one can   show that for any finite $T>0$,
 \begin{align}\label{uniD1}
\sup_{n\geq 1} \sup_{t\leq T}\sup_{x\in\R} \big\| Du_n(t,x)\big\|_{L^2(\O;\fH)}   < \infty;
\end{align}

\noi
see \cite[Lemma 12]{BN17}.\footnote{It is labelled as Lemma 4.1 in its arXiv version.}
It follows from the bound \eqref{uniD1},  properties (iii) and (iv) in \eqref{propP}, with Lemma \ref{lem_pred}
that for $\fm$-almost every $\xi\in\mathbf{Z}$,

\noi
\begin{align} \label{D_xMOD}
\text{$D_\xi u_n$ admits a predictable modification (still denoted by $D_\xi u_n$).} 
\end{align}

Next,  we can combine the uniform convergence  in \eqref{unic3} 
and uniform bound in \eqref{uniD1}
to obtain  $u(t,x)\in \dom(D)$ with \eqref{uniD2},
which is an easy  consequence of our Lemma \ref{lem_1}. 
As a consequence of Lemma \ref{lem_pred} again, 
$D_\xi u$ admits a predictable modification for $\fm$-almost every $\xi\in\mathbf{Z}$.

\smallskip

Finally, we   show that the integral equation \eqref{eq_MD} holds. 
First, we write 
\begin{align}\label{eq_u}
u(t,x) = 1 + \dl( V_{t,x}),
\end{align}

\noi
where $V_{t,x}$ is a predictable random field\footnote{Hence, $V_{t,x}\in \dom(\dl)$ 
according to Lemma \ref{lem25}.}
given by $V_{t,x}(r, y, z) = G_{t-r}(x-y)z \s\big( u(r, y) \big)$. In particular, 
we have $\dl( V_{t,x}) = u(t,x) - 1\in \dom(D)$. In order to apply Lemma \ref{lem_Heis},
we need to verify the condition \eqref{cond_H1}.
Indeed, 
\begin{itemize}
\item[(a)] for $u(r,y)\in\dom(D)$ and $\s$ Lipschitz, we have 
$ \s\big( u(r, y) \big)\in\dom(D)$ in view of Remark \ref{rem_BZ-iii},
and moreover, we have 
\[
| D_{r_0, y_0, z_0} \s\big( u(r, y) \big) | \leq \textup{Lip}(\s) | D_{r_0, y_0, z_0} u(r, y) |
\]
with $ \textup{Lip}(\s) $ denoting the Lipschitz constant of $\s$;

\item[(b)]

 with $\xi_0 = (r_0, y_0,z_0)$ and $\xi = (r, y, z)$,
\[
D_{\xi_0} V_{t,x}(\xi) = G_{t-r}(x-y)z D_{r_0, y_0, z_0} \s\big( u(r, y) \big),
\]
we can deduce from the above point (a) and the uniform bound \eqref{uniD2}
that 

\noi
\begin{align} \notag
\begin{aligned} 
&\quad \E\int_{\mathbf{Z}^2} \big| D_{\xi_0} V_{t,x}(\xi) \big|^2 \fm(d\xi) \fm(d\xi_0)  \\
&\leq   \textup{Lip}^2(\s) 
\E  \int_{\mathbf{Z}^2} | G_{t-r}(x-y)z  |^2  | D_{\xi_0}  u(r, y) |^2 m(d\xi_0) drdy \nu(dz) \\
&\leq   \textup{Lip}^2(\s)  \bigg( \sup_{r\leq t} \sup_{y\in\R}  \| D u(r, y) \|^2_{L^2(\O;\fH)} \bigg)
\int_{\mathbf{Z}} | G_{t-r}(x-y)z  |^2 drdy \nu(dz)
 < +\infty .
\end{aligned}
\end{align}

\end{itemize}
That is, the condition \eqref{cond_H1} is   verified, so that 
Lemma \ref{lem_Heis} implies   
$D_{\xi} V_{t,x}\in \dom(\dl)$ and 
\[
D_{\xi} u(t,x) = D_{\xi} \dl( V_{t,x}) = \dl (D_{\xi} V_{t,x}  ) + V_{t,x}(\xi),
\]
which is exactly the desired integral equation \eqref{eq_MD}.
\qedhere

\end{proof}

\begin{remark}\rm \label{rem_MD}

(i)
If in addition $m_p<\infty$ for some $p>2$, then a similar argument shows that
\[
\sup_{t\leq T} \sup_{x\in\R}  \big\| u_n(t,x)-u(t,x) \big\|_p \to 0  
\]
as $n\to\infty$, and consequently with \eqref{KTP}, we have
\[
A_{T,p}:=\sup_{n\geq 1}\sup_{(t,x) \in [0,T]\times \R}\E\big[|u_n(t,x)|^p\big]<\infty.
\]
In this case, using the Lipschitz property of $\s$, we have
\begin{equation}
\label{ATPs}
A_{T,p,\s}:=\sup_{n\geq 1}\sup_{(t,x) \in [0,T]\times \R}\E\big[|\s(u_n(t,x))|^p\big] \leq 2^{p-1}(|\s(0)|^p+\Lip^p(\s)A_{T,p}).
\end{equation}

\smallskip
\noi
(ii) As indicated in \eqref{D_xMOD},
 $D_\xi u_n$ is predictable.   By the same argument leading to \eqref{eq_MD},
 we have

\noi
\begin{align} 
\begin{aligned} \label{eq_MDN}
D_{r, y, z} u_{n+1}(t,x) 
&= G_{t-r}(x-y)z  \s( u_n(r, y)) \\
&\quad + \int_0^t \int_{\R} \int_{\R_0} G_{t-s_1}(x-y_1)  z_1 D_{r, y, z} \s\big( u_n(s_1, y_1) \big)   
\wh{N}(ds_1, dy_1, dz_1).
\end{aligned}
\end{align}

\noi
And for any $(t,x)\in(0,\infty)\times\R$
and  for $\fm$-almost every 
$(r,y,z)\in\mathbf{Z}$, the process $ \{\mathfrak{M}_s\}_{s\in[r,t]}$, with

\noi
 \begin{align}   \notag
 \mathfrak{M}_s := \int_r^s\int_{\R}\int_{\R_0} G_{t-r_1}(x-y_1)z_1  D_{r, y, z}\s( u_n(r_1,y_1) )  
  \wh{N}(dr_1, dy_1, dz_1),
 \end{align}
 has a c\`adl\`ag {\rm(}i.e., right continuous with left limits{\rm)} modification,
 which is an $\mathbb{F}$-martingale with {\rm(}predictable{\rm )} quadratic variation
 \[
 \jb{\mathfrak{M}}_s
 =  \int_r^s\int_{\R}\int_{\R_0} \big[G_{t-r_1}(x-y_1)z_1  D_{r, y, z}\s( u_n(r_1,y_1) )  \big]^2   
 dr _1dy_1\nu(dz_1).
 \]
This will be used in the proof of the following Proposition \ref{prop_MD}.

\end{remark}

Finally, we are ready to state the key technical estimate on the Malliavin derivative
of the solution $u(t,x)$, which will be combined with a variant of 
the discrete Malliavin-Stein bound to obtain the quantitative CLT
in Theorem \ref{thm_main}.

\begin{proposition}\label{prop_MD}
Assume $m_2\in(0,\infty) $ as in \eqref{m2}.
Let $u$ be the unique mild solution to \eqref{SWE}
and $D_\xi u$ denote the predictable random field from Proposition \ref{prop:diff}
for $\fm$-almost every $\xi\in\mathbf{Z}$.
Suppose that   $m_p < \infty$ for some finite $p\geq 2$.
 Then, for   every $(t,x)\in \bR_{+} \times\R$ and 
 for $\fm$-almost every $(r,y,z)\in [0,t] \times \R \times \R_0$, we have

\noi
\begin{align} \label{Z1}
\| D_{r, y, z} u(t,x) \|_p \leq \mathcal{C}_{t,p, \s}  G_{t-r}(x-y) |z| ,
\end{align}

\noi
where  
\begin{align}
\label{CTPS}
\mathcal{C}_{t,p, \s} = 2^{1-\frac1p}  A_{t,p,\s}^{\frac1p}   
 \exp\Big( \frac{1}{p} t^2 \Lip^p(\s) C_p(t) \Big)
\end{align}

\noi
with $A_{t,p,\s}$ as in  \eqref{ATPs},
 $\textup{Lip}(\s)$ denoting the Lipschitz constant of $\s$, 

\noi
\begin{align}
 \notag
C_{p}(t)=2^{p-1}B_p^p\big( m_2^{\frac p2} t^{p-2} + m_p \big),
\end{align}

\noi
and  $B_p$ the constant in the  Rosenthal's inequality.

\end{proposition}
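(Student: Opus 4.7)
The plan is to first establish the bound \eqref{Z1} for every Picard iterate $u_n$ with a constant independent of $n$, and then transfer it to the limit $u$ by combining a weak-convergence argument with differentiation of Radon measures. By Proposition \ref{prop:diff} and Remark \ref{rem_MD}-(ii), the derivatives $Du_n$ satisfy the explicit integral equation \eqref{eq_MDN}; the integrand $D_{r,y,z}\s(u_n(\cdot,\cdot))$ is predictable in its first two arguments for $\fm$-a.e.\ $(r,y,z)$ by Remark \ref{rem_BZ-iii} and Lemma \ref{lem_pred}, which legitimizes the It\^o-type inequalities below.

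For the $n$-uniform bound, I introduce
\[
C_n(t):=\sup_{x\in\R,\,(r,y,z)\in\mathbf{Z}}\frac{\|D_{r,y,z}u_n(t,x)\|_p^p}{(|z|G_{t-r}(x-y))^p}
\]
(with the convention $0/0:=0$). Applying Minkowski's inequality to \eqref{eq_MDN} and Proposition \ref{prop_Rose} to the stochastic-integral term, together with the Lipschitz estimate $|D\s(u_n)|\leq\Lip(\s)|Du_n|$ from Remark \ref{rem_BZ-iii} and the identity $G_t^p=2^{1-p}G_t$, one is left with the task of estimating
\[
\int_0^t\!\!\int_\R G_{t-s}(x-y_1)\,\|D_{r,y,z}u_n(s,y_1)\|_p^p\,dsdy_1.
\]
Inserting the inductive bound $\|D_{r,y,z}u_n(s,y_1)\|_p^p\leq C_n(s)(|z|G_{s-r}(y_1-y))^p$ and using the light-cone convolution identity
\[
\int_\R G_{t-s}(x-y_1)G_{s-r}(y_1-y)\,dy_1\leq (t-r)\,G_{t-r}(x-y),
\]
which follows because the two supporting intervals overlap only when $|x-y|<t-r$, with an intersection of length at most $2\min(t-s,s-r)$, I arrive after straightforward algebra at the recursive inequality
\[
C_{n+1}(t)\leq 2^{p-1}A_{t,p,\s}+C_p(t)\Lip^p(\s)\,t\int_0^t C_n(s)\,ds,\qquad C_0\equiv 0.
\]
Gronwall's lemma then yields $\sup_n C_n(t)\leq 2^{p-1}A_{t,p,\s}\exp\bigl(t^2\Lip^p(\s)C_p(t)\bigr)$, whose $p$-th root is precisely $\mathcal{C}_{t,p,\s}$ in \eqref{CTPS}.

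To pass $n\to\infty$, by \eqref{unic3} and \eqref{uniD1} Lemma \ref{lem_1}-(ii) furnishes the weak convergence $Du_n(t,x)\wto Du(t,x)$ in $L^2(\O;\fH)$. Testing against $V(\xi,\o):=\phi(\xi)\psi(\o)$ with $\phi\in L^2(\fm)$ nonnegative and $\psi\in L^{p/(p-1)}(\O)\cap L^2(\O)$, pulling the $L^p(\O)$-norm inside the $\fm$-integral by Minkowski, and using the $n$-uniform bound just established, one obtains in the limit
\[
\Big\|\int_\mathbf{Z}\phi(\xi)D_\xi u(t,x)\,\fm(d\xi)\Big\|_p\leq \mathcal{C}_{t,p,\s}\int_\mathbf{Z}\phi(\xi)\,G_{t-r}(x-y)|z|\,\fm(d\xi).
\]
Specializing to $\phi=\fm(A_\eps(\xi_0))^{-1}\ind_{A_\eps(\xi_0)}$ for small $\fm$-balls around $\xi_0=(r_0,y_0,z_0)$, and combining Fatou's lemma in $\o$ with the $\o$-wise Lebesgue differentiation theorem applied to the locally $L^1(\fm)$ map $\xi\mapsto D_\xi u(t,x)(\o)$ on the left, and ordinary Lebesgue differentiation on the right, the passage $\eps\downarrow 0$ yields $\|D_{\xi_0}u(t,x)\|_p\leq\mathcal{C}_{t,p,\s}|z_0|G_{t-r_0}(x-y_0)$ at $\fm$-a.e.\ Lebesgue point $\xi_0$, which is exactly \eqref{Z1}.

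The technical heart of the argument is this last step: although the Picard--Gronwall stage is largely mechanical once the light-cone convolution identity is in place, transferring the bound from $(u_n)$ to $u$ is delicate because weak $L^2(\O;\fH)$ convergence is not strong enough to yield pointwise $L^p(\O)$ estimates directly. The detour through an integrated $L^p(\O)$-inequality followed by Radon-measure differentiation circumvents this obstruction, at the price of delivering an $\fm$-a.e.\ rather than an ``everywhere'' statement; verifying the applicability of the Lebesgue--Besicovitch theorem for the $\s$-finite product measure $\fm={\rm Leb}\otimes\nu$ is the other point that requires attention.
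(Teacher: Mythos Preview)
Your proposal is correct and follows the same two-stage architecture as the paper: a uniform-in-$n$ bound for the Picard iterates, then a passage to the limit via weak $L^2(\O;\fH)$-convergence combined with differentiation of Radon measures. The execution differs in two places. For the Picard stage, the paper iterates inequality \eqref{MD2} explicitly $n$ times and bounds the resulting $j$-fold space-time integral of products of $G$'s directly; you instead collapse everything into the scalar $C_n(t)$ via the one-step convolution estimate $\int_\R G_{t-s}(x-y_1)G_{s-r}(y_1-y)\,dy_1\le (t-r)G_{t-r}(x-y)$ and close with Gronwall. Both yield precisely $\mathcal{C}_{t,p,\s}$. For the limit stage, the paper fixes a test variable $\W\in L^{p^\ast}\cap L^2$, obtains a bound on the \emph{deterministic} function $q(\xi)=\E[\W D_\xi u(t,x)]$ via Hahn decomposition and Radon--Nikodym differentiation, and only afterwards takes the supremum over $\W$ by duality; you reverse the order, first dualizing to get the integrated $L^p(\O)$-bound $\|\int\phi D_\xi u\|_p\le\mathcal{C}\int\phi G|z|$ and then differentiating $\omega$-wise with Fatou. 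Your route needs a Fubini argument to show that for $\fm$-a.e.\ $\xi_0$ the point $\xi_0$ is an $\omega$-a.s.\ Lebesgue point of $\xi\mapsto D_\xi u(t,x)(\o)$ (this uses the joint measurability from Lemma \ref{lem_pred}-(iii)); the paper's route avoids this but must instead control the $\fm$-null exceptional set uniformly over a countable dense family of $\W$'s. Neither variant is materially harder than the other.
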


\begin{proof}

By Remark \ref{rem_MD}, for $\fm$-almost every $\xi =(r, y,z) \in \mathbf{Z}$,
  $D_\xi u_{n+1}$ is predictable for each $n \geq 0$
  with  the integral equation \eqref{eq_MDN}.
Using Minkowski's inequality and  Proposition  \ref{prop_Rose}, 
\eqref{ATPs}, and \eqref{add1b}, we have

\noi
\begin{align}   \notag
\begin{aligned}
& \| D_{r, y, z} u_{n+1}(t,x)  \|_p 
 \leq  G_{t-r}(x-y)   \|\s(u_n(r,y))\|_p |z|\\
&\qquad  + \bigg(  C_{p}(t)   \int_r^t \int_{\R}  G_{t-s_1}^p(s-y_1)
     \big\|  D_{r, y, z} \s(u_n(s_1, y_1))  \big\|_p^p \, ds_1dy_1 \bigg)^{\frac1p}\\
    & \leq   a_t^{\frac1p}|z|G_{t-r}(x-y)+ \bigg( b_t  \int_r^t \int_{\R} 
    G^p_{t-s_1}(x-y_1)
    \|  D_{r, y, z} u_n(s_1, y_1) \|_p^p \, ds_1dy_1 \bigg)^{\frac1p},
\end{aligned}
\end{align}

\noi
where $a_t=A_{t, p,\s}$ is given by \eqref{ATPs} and 
$b_t=\Lip^p(\s)   C_p(t)$.
Using  
$|a+b|^p \leq 2^{p-1} (|a|^p + |b|^p)$
and $2^{p-1} G_t^p = G_t$ in view of \eqref{def-G},
we can further get 

\noi
\begin{align}\label{MD2}
\begin{aligned}
 \| D_{r, y, z} u_{n+1}(t,x)  \|_p^p 
& \leq  a_{t}  |z|^p G_{t-r}(x-y)\\
&\quad +    b_t  \int_r^t \int_{\R}  G_{t-s_1}(x-y_1)
    \big\|  D_{r,y,z} u_n(s_1,y_1) \big\|_p^p  \,  ds_1dy_1.
\end{aligned}
\end{align}

\noi
Iterating \eqref{MD2} for  $n$ times
with
$D_{r,y, z} u_0(t,x)  = 0$,
 we  can obtain

\noi
\begin{align}\label{MD3}
\begin{aligned}
& \| D_{r, y, z} u_{n+1}(t,x)  \|_p^p   \\
&   \leq  a_{t}  |z|^p   
\sum_{j=0}^{n} b_t^j
\int_{r< s_j <  \ldots < s_1 < t } 
\int_{\R^j}  \prod_{k=0}^{j} G_{s_k-s_{k+1}}(y_k-y_{k+1})   dy_1\cdots dy_j ds_1 \cdots ds_j 
\end{aligned}
\end{align}

\noi
with $(s_0, y_0) = (t, x)$ and $(s_{j+1}, y_{j+1}) = (r, y)$.
Next, using   $G_t(x) = \frac{1}{2}\ind_{\{ |x| < t \}}$ 
and    the triangle inequality, we have

\noi
\begin{align*}
 \prod_{k=0}^{j} G_{s_k-s_{k+1}}(y_k-y_{k+1})
 & =\frac{1}{2^{j+1}}\prod_{k=0}^{j}\ind_{\{ |y_k-y_{k+1} |< s_k-s_{k+1}\}} \\
 &\leq \frac{1}{2^{j+1}}
\ind_{ \{ |x-y| < t -r  \}}
\prod_{k=0}^{j-1}  \ind_{\{ |y_k-y_{k+1} |<  s_k-s_{k+1}  \}}.
\end{align*}
Hence, the integrals in \eqref{MD3} can be estimated as follows:

\noi
\begin{align*}
& \int_{r< s_j  < \ldots < s_1 < t }
\int_{\R^j} 
\prod_{k=0}^j  G_{s_k-s_{k+1}}(y_k-y_{k+1}) dy_1\cdots dy_j   ds_1 \cdots ds_j \\ 
&\qquad \leq   \frac{1}{2^{j+1}} \ind_{ \{ |x-y| < t -r  \}}
\int_{r< s_j <\ldots < s_1 < t } 
\left(\int_{\R^j}  \prod_{k=0}^{j-1}  \ind_{\{ |y_k-y_{k+1} |<  s_k-s_{k+1}   \}} 
  dy_1\cdots dy_j \right) ds_1 \cdots ds_j  \\
&\qquad \leq   \frac{1}{2}  \ind_{ \{ |x-y| < t -r  \}}  \frac{(t-r)^{2j}}{ j!} 
                =G_{t-r}(x-y) \frac{ (t-r)^{2j}}{j!} , 
\end{align*}

\noi
where we performed the integration on $\R^j$ in the order $dy_j, dy_{j-1}, \ldots, dy_1$
that leads to a bound by $2^j(t-r)^j$.
Therefore, we deduce from \eqref{MD3} that

\noi
\begin{align*}
\| D_{r,y,z} u_{n+1}(t, x) \|^p_p 
&\leq a_t |z|^p G_{t-r}(x-y) 
\sum_{j=0}^{n}
b_t^j      \frac{ (t-r)^{2j}}{j!} \\
&\leq  a_t |z|^p  e^{ t^2 b_t} G_{t-r}(x-y) .
\end{align*}

\noi
From  $G_t^p(x)2^{p-1} =G_t(x)$ again,  we deduce that

\noi
\begin{align}\label{MDN}
\| D_{r,y,z} u_{n+1}(t, x) \|_p 
\leq \mathcal{C}_{t,p, \s} G_{t-r}(x-y)|z|,
\end{align}

\noi
where 
$\mathcal{C}_{t,p, \s} := 2^{1-\frac{1}{p}}  a_{t}^{\frac1p} \exp\big(  \frac{1}{p}t^2 b_t \big) 
= 2^{1-\frac{1}{p}}  A_{t, p,  \s}^{\frac1p}  \exp\big(  \frac{1}{p}t^2 \Lip^p(\s) C_p(t) \big) $  
does not depend on $n$.\footnote{From \eqref{MDN}, we can provide another proof of
the uniform bound \eqref{uniD1}.}

\smallskip

 Finally, we proceed with a weak convergence argument 
 to establish  \eqref{Z1}.

 For $p\in[2,\infty)$, let $p^\ast = \frac{p}{p-1}\in (1, 2]$ be the 
H\"older conjugate exponent. 
Let $\phi\in\fH$ be  \emph{deterministic} and {\it nonnegative},  and let $\W$ be a real
 random variable with finite second moment and
$\| \W \|_{p^\ast} \leq 1$
so that the mapping
\[
(\o, r, y, z)\in\O\times\R_+\times\R\times\R_0 \mapsto \W(\o) \phi(r,y,z)
\]
belongs to $L^2(\O; \fH)$.
Then, thanks to Fubini's theorem and  the weak convergence of  $Du_n(t,x)$to  $Du(t,x)$ in $L^2(\O; \fH)$ (due to Lemma \ref{lem_1}),
we get 
\begin{equation}
\label{lim-WD}
\int_{\mathbf{Z}}  \phi(r,y,z) \E\big[ \W D_{r, y, z} u_n(t,x) \big] drdy \nu(dz) 
\to \int_{\mathbf{Z}}  \phi(r,y,z) \E\big[ \W D_{r, y, z} u(t,x) \big] drdy \nu(dz) 
\end{equation}
as $n\to+\infty$. 
It follows from \eqref{MDN} and H\"older inequality that, for any $n\ge 1$,

\noi
\begin{align}   \notag
\begin{aligned}  
&\bigg| \int_{\mathbf{Z}}  \phi(r,y,z) \E\big[ \W D_{r, y, z} u_n(t,x) \big] drdy \nu(dz)  \bigg| \\
& \qquad\qquad \leq
\mathcal{C}_{t,p,\s}  \int_{0}^t \int_{\R}\int_{\R_0}  \phi(r,y,z)  G_{t-r}(x-y) |z|   \nu(dz) dy dr. 
\end{aligned}
\end{align}

\noi
Hence, using \eqref{lim-WD}, we infer that
\begin{align}\label{Radon1}
\begin{aligned}  
&\bigg| \int_{\mathbf{Z}}  \phi(r,y,z) \E\big[ \W D_{r, y, z} u(t,x) \big] drdy \nu(dz)  \bigg| \\
& \qquad\qquad \leq
\mathcal{C}_{t,p,\s}  \int_{0}^t \int_{\R}\int_{\R_0}  \phi(r,y,z)  G_{t-r}(x-y) |z|   \nu(dz) dy dr. 
\end{aligned}
\end{align}

Now we put 
\[
q(r, y, z) := \E\big[ \W D_{r, y, z} u(t,x) \big], 
\]
which is an element in $\fH$, since $u(t,x) \in {\rm dom}(D)$.
Then, 
\[
\mu_q(d\xi) = q(\xi) \fm(d\xi)
\]
is a  signed Radon measure with the Hahn decomposition
$
\mu_q = \mu^+_q - \mu_q^-,
$
where
\[
\mu^+_q(K) = \int_K \max\{ q(\xi), 0 \} \fm(d\xi)
\quad
{\rm and}
\quad
\mu^-_q(K) = \int_K \max\{ - q(\xi), 0 \} \fm(d\xi).
\]
By the basic property of {\it Hahn decomposition},
one can find 
a partition $\{P^{+},  P^{-}\}$ of  $\mathbf{Z}$  such that
$\mu_q^{+}(K_1) \geq 0$ for all $K_1 \subseteq P^{+}, K_1 \in \mathcal{Z}$ and 
$\mu_q^{-}(K_2) \leq 0$ for all $K_2\subseteq P^{-}, K_2 \in \mathcal{Z}$;
see, e.g., \cite[Theorem 5.6.1]{Dudley}.\footnote{To be more precise, 
we shall  proceed by truncating the measure $\mu_q$ on  $E_m$ 
with finite $\fm$-measure and $E_m\uparrow \mathbf{Z}$, 
and then, the truncated measure $\mu_q(\bul \cap E_m)$
satisfies the definition of signed measure on \cite[page 178]{Dudley} and 
the resulting Hahn decomposition holds: 
$\mu_q(\bul \cap E_m) =
 \mu^+_q(\bul \cap E_m) - \mu^-_q(\bul \cap E_m)$.
 The rest of the proof can be done for the truncated measures and 
 the resulting constants (e.g., the ones in \eqref{Radon2}-\eqref{Radon3})
  do not depend on $m$ as one can see easily,
 and a final passage of $m\to\infty$ concludes the proof. 
}
 Moreover,
\[
\mu_q^+(K) = \mu_q(P^{+} \cap K)
\quad
{\rm and}
\quad
\mu_q^-(K) = -\mu_q(P^{-} \cap K)
\]
for any $K\in\mathcal{Z}$.

Now one can observe that 
each of the Radon measures  $\mu_q^+$, $\mu_q^{-}$, and $\mu_G$ are absolutely continuous
with respect to the Radon measure $\fm$,\footnote{Here, we treat $\nu$ as a measure on $\R$
with $\nu(\{ 0\}) =0$, then we view $\fm$ as a measure on $\R^3$.}
where
\[
\mu_G(dr, dy, dz) = G_{t-r}(x-y)|z| drdy \nu(dz). 
\]
Therefore, we can use the  ``{\it differentiation of Radon measures}''\footnote{One can refer
to the book \cite{EV15} by Evans and Gariepy for more details,
and in particular Theorem 1.30, Theorem 1.29,
Definition 1.20, and Definition 1.21.}
that 
for $\fm$-almost every $(r, y, z)\in\mathbf{Z}$,
as the corresponding   {\it Radon-Nikodym derivatives},  
$q^+(r, y, z)$ and $    G_{t-r}(x-y) |z|$
 satisfy: for   $\fm$-almost every
  $\xi=(r,y,z)\in \mathbf{Z}$,
\noi
\begin{align}
\begin{aligned}
q^+(\xi) :=  \max\{q(\xi), 0 \} 
& =  \lim_{\eps\downarrow  0}
  \frac{ \mu_q^+ \big( B_{\eps}(\xi)    \big)  }{ \fm  \big( B_{\eps}(\xi)    \big) }
    =  \lim_{\eps\downarrow  0}   \frac{ \mu_q \big( B_{\eps}(\xi) \cap P^{+}  \big)   }{ \fm  \big( B_\eps(\xi)    \big) }
   \\
  &\leq \mathcal{C}_{t,p,\s} 
    \lim_{\eps\downarrow  0}
  \frac{ \mu_G \big( B_{\eps}(\xi)    \big)     }{ \fm  \big( B_{\eps}(\xi)    \big) } \\
  &=\mathcal{C}_{t,p,\s} 
    G_{t-r}(x-y) |z|,
\end{aligned}
\label{Radon2}
\end{align}

\noi
where    $B_\eps(\xi)$ denotes the usual Euclidean open ball with center at $\xi$ and radius $\eps$,
and
the above inequality follows from the bound \eqref{Radon1}:

\noi
\begin{align*}
0 & \leq \mu_q \big(B_{\varepsilon}(\xi)\cap P^{+}\big)
=\int_{B_{\varepsilon}(\xi)\cap P^{+}}
\E\big[ \W D_{r', y', z'} u(t,x) \big]  dr'dy' \nu(dz')\\
&\leq C_{t,p,\s } \int_{B_{\varepsilon}(\xi)\cap P^{+}}  G_{t-r'}(x-y') |z'|dr'dy' \nu(dz')\\
&= C_{t,p,\s} \, \mu_G \big( B_\eps(\xi)\cap P^{+}\big) \leq
C_{t,p,\s} \, \mu_G \big( B_\eps(\xi)\big).
\end{align*}

The same argument yields

\noi
\begin{align}
\begin{aligned}
q^-(\xi) :=  \max\{ - q(\xi), 0 \} 
& =  \lim_{\eps\downarrow  0}
  \frac{ \mu_q^- \big(  B_\eps(\xi)    \big)    }{ \fm  \big(B_\eps(\xi)    \big) }
  =  \lim_{\eps\downarrow  0}   \frac{ -\mu_q \big( B_\eps(\xi) \cap P^{-}  \big)    }{ \fm  \big( B_\eps(\xi)    \big) }
   \\
  &\leq   \mathcal{C}_{t,p,\s} 
    \lim_{\eps\downarrow  0}
  \frac{ \mu_G \big( B_\eps (\xi)    \big)     }{ \fm  \big( B_\eps(\xi)    \big) } \\
  &= \mathcal{C}_{t,p,\s} 
    G_{t-r}(x-y) |z|,
\end{aligned}
\label{Radon3}
\end{align}
where for the inequality above, we used the fact that
\begin{align*}
0 & \leq -\mu_q \big(B_{\eps}(\xi)\cap P^{-}\big)
=\left|\int_{B_{\eps}(\xi)\cap P^{+}}\E\big[ \W D_{r', y', z'} u(t,x) \big]  dr'dy' \nu(dz')\right|\\
&\leq C_{t,p,\s } \int_{B_{\eps}(\xi)\cap P^{-}}  G_{t-r'}(x-y') |z'|dr'dy' \nu(dz')\\
&= C_{t,p,\s }  \mu_G \big(B_\eps(\xi)\cap P^{-}\big) 
\leq  C_{t,p,\s}  \mu_G \big(B_\eps(\xi)\big).
\end{align*}

Then, combining \eqref{Radon2} and \eqref{Radon3}
with $|q| = \max\{ q^+, q^-\}$,
 leads to 

\noi
\begin{align}\label{Radon4}
\Big| \E\big[ \W D_{r, y, z} u(t,x) \big]  \Big| =|q(r,y,z)| \leq   \mathcal{C}_{t,p,\s}   G_{t-r}(x-y) |z|
\end{align}

\noi
 with $\mathcal{C}_{t,p,\s}$ as in \eqref{CTPS}. Note that the above bound holds
 for any $\W$ with finite second moment and $\| \W\|_{p^\ast} \leq 1$.
 Finally, by duality

 \noi
 \begin{align}
 \| D_{r,y, z} u(t,x) \|_p 
 &= \sup_{ \| \W \|_{p^\ast}\leq 1} \Big| \E\big[ \W D_{r, y, z} u(t,x) \big]  \Big| \notag  \\
 &=  \sup_{k\geq 1} \sup_{ \| \W \|_{p^\ast}\leq 1} \Big| \E\big[ T_k(\W) D_{r, y, z} u(t,x) \big]  \Big|   \notag \\
 &=   \sup_{ \| \W \|_{p^\ast}\leq 1} \Big| \E\big[ \W D_{r, y, z} u(t,x) \big]  \Big| \ind_{\{\| \W\|_2 < \infty \}},
  \label{Radon5}
 \end{align}
where $T_k(x) =\max\{  \min\{ x, k\} , -k \}$. 
Hence, the bound \eqref{Radon4}, together with 
 \eqref{Radon5}, implies \eqref{Z1}
 for $\fm$-almost every $(r, y, z)\in\mathbf{Z}$.
 The proof is then completed. 
 \qedhere

 \end{proof}

As a corollary of Proposition  \ref{prop_MD},
we derive the similar bound for $D_\xi \s^2( u(t,x) )$.
Note that the function $\s^2$ may not be Lipschitz so that 
a direct application of Proposition  \ref{prop_MD}
and \eqref{add1b} is not possible. 
The following result will be used in \eqref{use_DSU}
to establish the  \textsf{QCLT}  in  Theorem \ref{thm_main}.

\begin{corollary}
\label{cor_DSU}
Let the assumptions in Proposition \ref{prop_MD} hold with 
$m_4<\infty$. Assume that $\s:\R\to\R$ is Lipschitz. 
Then, for any $(t, x)\in(0, \infty) \times\R$, $\s^2(u(t,x)) \in \dom(D)$ 
and 
 for $\fm$-almost every $(r,y,z)\in [0,t] \times \R \times \R_0$,
 we have 
 
 \noi
\begin{align}
\label{Dsu}
\|D_{r,y,z} \s^2(u(t,x))\|_2 \leq C_t G_{t-r}(x-y)|z|(1+|z|) ,
\end{align}

\noi
where $C_t=\big({\rm Lip}^2(\s)+2{\rm Lip}(\s)K_{t,4,\s}^{1/4}\big)\big(\mathcal{C}_{t,4,\s}^2  +\mathcal{C}_{t,4,\s}\big)$
with $K_{t,4,\s}$, $\mathcal{C}_{t,4,\s}$ as in \eqref{KTPs}, \eqref{CTPS}.
\end{corollary}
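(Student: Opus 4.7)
The obstacle is that $\sigma^2$ is not globally Lipschitz, so Lemma \ref{chain} does not apply directly to $\sigma^2$. The plan is to exploit the difference structure of the add-one cost operator to handle $\sigma^2(u)$ as a product, and then verify membership in $\dom(D)$ via the $L^2$ criterion in Lemma \ref{rem_BZ-ii}(ii).

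First, since $u(t,x)\in\dom(D)$ by Proposition \ref{prop:diff} and $\sigma$ is Lipschitz, Remark \ref{rem_BZ-iii} gives $\sigma(u(t,x))\in\dom(D)$ with the pointwise bound $|D_\xi\sigma(u(t,x))|\leq \Lip(\sigma)|D_\xi u(t,x)|$. Next, applying the add-one cost operator \eqref{addcost} to the function $\sigma^2$ composed with $u(t,x)=\ff(N)$ and factoring the difference of squares, I obtain
\begin{align*}
D^+_\xi \sigma^2(u(t,x)) &= \sigma^2\big(u(t,x)+D_\xi u(t,x)\big)-\sigma^2(u(t,x)) \\
&= D_\xi \sigma(u(t,x)) \cdot \big(2\sigma(u(t,x)) + D_\xi \sigma(u(t,x))\big).
\end{align*}
Using the Lipschitz bound twice, this gives the pointwise estimate
\[
|D^+_\xi \sigma^2(u(t,x))| \leq 2\,\Lip(\sigma)\,|\sigma(u(t,x))|\,|D_\xi u(t,x)| + \Lip^2(\sigma)\,|D_\xi u(t,x)|^2.
\]

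Next, I will take $L^2(\Omega)$ norms on both sides, apply Cauchy--Schwarz to split the mixed product, and then invoke \eqref{KTPs} with $p=4$ together with the key estimate \eqref{Z1} of Proposition \ref{prop_MD} with $p=4$ (both of which require $m_4<\infty$). This yields
\[
\|D^+_\xi \sigma^2(u(t,x))\|_2 \leq 2\,\Lip(\sigma)K_{t,4,\sigma}^{1/4}\,\mathcal{C}_{t,4,\sigma}\,G_{t-r}(x-y)|z| + \Lip^2(\sigma)\,\mathcal{C}_{t,4,\sigma}^2\,G_{t-r}^2(x-y)|z|^2
\]
for $\xi=(r,y,z)$. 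Since $G_{t-r}(x-y)=\frac{1}{2}\ind_{\{|x-y|<t-r\}}$ satisfies $G_{t-r}^2\leq G_{t-r}$, I can factor out $G_{t-r}(x-y)|z|(1+|z|)$ and collect constants into the form of $C_t$ stated in the corollary.

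Finally, to conclude $\sigma^2(u(t,x))\in\dom(D)$, I integrate the square of the above $L^2$-bound against $\fm$ over $\mathbf{Z}$: the spatial integration of $G_{t-r}^2$ is finite over $[0,t]\times\R$, and the $z$-integration produces $m_2$ and $m_4$, both finite by hypothesis. This verifies the hypothesis of Lemma \ref{rem_BZ-ii}(ii), which simultaneously gives $\sigma^2(u(t,x))\in\dom(D)$ and the identification $D_\xi\sigma^2(u(t,x))=D^+_\xi\sigma^2(u(t,x))$ for $\fm$-almost every $\xi$, completing \eqref{Dsu}. The main (though mild) subtlety is this last step: because $\sigma^2$ fails to be Lipschitz, the difference rule of Lemma \ref{chain} is unavailable, and Malliavin differentiability must be established via the add-one cost characterization.
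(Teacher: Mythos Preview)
Your proof is correct and follows essentially the same approach as the paper: both compute $D^+_\xi\sigma^2(u)$ via the difference-of-squares factoring, bound it pointwise using the Lipschitz property of $\sigma$, pass to $L^2(\Omega)$ via Cauchy--Schwarz and the $L^4$ estimates \eqref{KTPs} and \eqref{Z1}, and then invoke Lemma~\ref{rem_BZ-ii}(ii) to conclude membership in $\dom(D)$. The only cosmetic difference is that the paper first groups the $L^2$ bound as $C_t'(\|D_\xi u\|_4^2+\|D_\xi u\|_4)$ before applying \eqref{Z1}, which makes the stated constant $C_t$ appear more transparently, but your route arrives at the same bound.
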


\begin{proof}
Let $F=u(t,x)$. Then, $F\in\dom(D)$ with $D^+_\xi F = D_\xi F$.
 We need to show $\s^2(F) \in  \dom(D)$ when $\s^2$ is not necessarily Lipschitz. 
In view of  Lemma \ref{rem_BZ-ii},  
it suffices to prove that $\E \int_{\mathbf{Z}} |D_{\xi}^{+}\s^2(F)|^2 \, \fm(d\xi)<\infty$.
Let $\xi=(r,y,z)$ with $r<t$. Then
\begin{align*}
D_{\xi}^{+}\s^2(F)
&=\s^2(F+D_{\xi}^{+}F)-\s^2(F) \\
&=\big(\s(F+D_{\xi}^{+}F)-\s(F)\big)^2 + 2\s(F) \big(\s(F+D_{\xi}^{+}F)-\s(F)\big).
\end{align*}

\noi
Using the Lipschitz property of $\s$  and $D_{\xi}^{+}F=D_{\xi}F$ for $F \in  \dom(D)$, 
we obtain
\[
|D_{\xi}^{+}\s^2(F)|\leq {\rm Lip}^2(\s)|D_{\xi}F|^2 + 2{\rm Lip}(\s)|\s(F)| \cdot  |D_{\xi}F|.
\]
By Minkowski's inequality, followed by Cauchy-Schwarz inequality 
and \eqref{KTPs}, 
we obtain

\noi
\begin{align*}
\|D_{\xi}^{+}\s^2(F)\|_2 
& \leq {\rm Lip}^2(\s)\|D_{\xi}F\|_4^2 
+ 2{\rm Lip}(\s)\|\s(F)\|_4 \cdot \|D_{\xi} F\|_4\\
& \leq C_t' (\|D_{\xi}F\|_4^2 +\|D_{\xi}F\|_4),
\end{align*}
where $C_t'= {\rm Lip}^2(\s)+2{\rm Lip}(\s)K_{t,4,\s}^{1/4}$.
Using \eqref{Z1} and $G_t^2(x)=\frac{1}{2}G_t(x)$, 
we have for $\fm$-almost every $\xi = (r, y, z)\in\mathbf{Z}$,

\noi
\begin{align} \label{Dsu-plus}
\begin{aligned}
\|D_{\xi}^{+}\s^2(F)\|_2 
&\leq C_t (G_{t-r}^2(x-y)|z|^2  +G_{t-r}(x-y)|z|) \\
& \leq C_t G_{t-r}(x-y)|z|(1+|z|),
\end{aligned}
\end{align}

\noi
where  $C_t=C_t' (\mathcal{C}_{t,4,\s}^2  +\mathcal{C}_{t,4,\s})$.
 Therefore, with $m_2, m_4<\infty$,
 we have 
\[
\E\int_{Z}|D_{\xi}^{+}\s^2(F)|^2 \fm(d\xi) \leq C_t^2 \left(\int_0^t \int_{\R} G_{t-r}^2(x-y)dydr\right)\left(\int_{\R_0} |z|^2 (1+|z|)^2 \nu(dz)\right)<\infty.
\]
Hence $\s^2(F) \in  \dom(D)$. 
The bound   \eqref{Dsu} follows from \eqref{Dsu-plus} and  Lemma \ref{rem_BZ-ii}.
\qedhere

\end{proof}

\section{Malliavin-Stein bounds in the Poisson setting}
\label{section-MS}

In this section, we provide the Malliavin-Stein bounds
 in the Poisson setting,
 which will be used for the proof of  \textsf{QCLT}, convergence of finite-dimensional
  distributions (when we prove the functional CLT), 
  and  the result  on asymptotic independence.
 The Malliavin-Stein approach was invented by Nourdin and Peccati 
 in  the Gaussian setting \cite{NP_ptrf} and 
 the general theory has been developed in their monograph \cite{blue}.
 In a paper  \cite{PSTU10} by  Peccati, Sol\'e, Taqqu, and Utzet,
 the Malliavin-Stein approach found its Poisson analogue;
 see also the survey \cite{BP16}.

  In what follows, we first present the univariate Malliavin-Stein bound
  in Section \ref{SEC4_1}
  and  present a multivariate version in 
  Section \ref{SEC4_2}
  that is motivated by \cite{pimentel, tudor23}.

\subsection{Univariate Malliavin-Stein bound}\label{SEC4_1}

Let us begin with the {\it Stein's lemma} 
asserting that for a real  integrable random variable $Z$,
\begin{center}
$Z\sim\cN(0,1)$ if and only if $\bE[ f'(Z)]  = \bE[ Z f(Z)]$
\end{center}
for any absolutely continuous   function 
$f: \R\to\R$ such that $f'(Z)\in L^1(\O)$;
see, e.g., \cite[Lemma 2.1]{Ross11}.
This hints that
if one obtains $\bE[ f'(X) - Xf(X)]\approx 0$ for a large enough family 
of test functions $f$, one can say something about the proximity 
of law of $X$ to that of a standard normal random variable. 
In this paper, we use the Wasserstein distance \eqref{WASS_def}
to describe the distance between random variables,
where the corresponding test functions are $1$-Lipschitz.  
Taking any  $1$-Lipschitz function  $h:\R\to\R$, 
 we solve the following ordinary differential equation (ODE,  known as 
 the {\it Stein's equation})

\noi
\begin{align}
\label{Stein-eq}
 f'(x) - x f(x)=  h(x)-\bE[h(Z)],
\end{align}

\noi
where  $h$ is known and $f$ is the unknown with $Z\sim\cN(0,1)$.
One can easily solve the above ODE:
\[
f_h(x) = e^{\frac{x^2}{2}} \int_{-\infty}^x \big[ h(t) - \E( h(Z) ) \big] e^{\frac{-t^2}{2}} dt,
\]
which is the unique solution to \eqref{Stein-eq} subject to 
$\lim_{|x|\to+\infty} e^{\frac{-x^2}{2}} f(x) =0$. We call $f_h$ the 
{\it Stein's solution} to  the Stein's equation  \eqref{Stein-eq}. 
Via an involved careful analysis, one can derive the regularity properties
of $f_h$: with $\|h'\|_\infty \leq 1$,

\noi
\begin{align} \label{FhSol}
\text{$\| f_h\|_\infty \leq 2 $, 
 $\|f_h'\|_\infty \leq \sqrt{2/\pi}$, and    $\|f_h''\|_\infty \leq 2$};
\end{align}
see, e.g.,   \cite[Lemma 2.5]{Ross11}. 
Now replacing the dummy variable $x$ by $X$ in \eqref{Stein-eq}
and taking  expectation on both sides yields the {\it Stein's bound}: 
\begin{align}\label{Stein_bdd}
d_{\rm Wass}(X, Z)
\leq \sup_{f\in\F_W} \big|  \bE[ f'(X) - Xf(X)]  \big|,
\end{align}
where $\F_W$ is the set of functions $f\in C^2(\R)$ with 
$\| f\|_\infty \leq 2 $, 
 $\|f'\|_\infty \leq \sqrt{2/\pi}$, and    $\|f''\|_\infty \leq 2$
 as in \eqref{FhSol}; see also Proposition 2.1.1 in \cite{GZ_thesis}.

In the following, we present  the univariate Malliavin-Stein bound,
one of our main technical tools,
which is  a variant of \cite[Theorem 3.1]{PSTU10}
that suits our application on SPDEs.

\begin{theorem}
\label{PSTU-new}
Let $X = \dl(v)\in\dom(D)$ for some $v\in \dom(\dl)$
such that $\Var(X)=1$.\footnote{By definition of $\dom(\dl)$, 
$X = \dl(v)$
is a centered random variable with finite variance. }
Then,  

\noi
\begin{align}
d_{\rm Wass} \big(  X, \cN(0,1) \big) 
&\leq  \sqrt{2/\pi}
 \big\|  1-\jb{DX, v}_{\fH}\big\|_1 
            +     \int_{\mathbf{Z}}  \|  (D_\xi X)^2  v(\xi) \|_1 \fm(d\xi)  \label{MSB1}  \\
 &\leq   \sqrt{2/\pi}  \sqrt{\Var\big(\jb{DX, v}_{\fH}\big)} 
     +     \int_{\mathbf{Z}}  \|  (D_\xi X)^2  v(\xi) \|_1 \fm(d\xi). \label{MSB2}
\end{align}
\end{theorem}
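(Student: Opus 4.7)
The plan is to combine the Stein bound \eqref{Stein_bdd} with the duality relation \eqref{dualR}, the difference rule (Lemma \ref{chain}), and a Taylor expansion, exploiting the representation $X=\dl(v)$.

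First, fix $f\in\F_W$, i.e.\ $f\in C^2(\R)$ with $\|f\|_\infty\leq 2$, $\|f'\|_\infty\leq \sqrt{2/\pi}$, and $\|f''\|_\infty\leq 2$, as in \eqref{FhSol}. Since $X\in\dom(D)$ and $f$ is Lipschitz, Remark \ref{rem_BZ-iii} ensures $f(X)\in\dom(D)$. Using $X=\dl(v)$ and the duality relation \eqref{dualR}, one writes
\begin{align*}
\E[Xf(X)] = \E[f(X)\dl(v)] = \E[\jb{Df(X),v}_{\fH}].
\end{align*}
By the difference rule (Lemma \ref{chain}, applied coordinatewise), $D_\xi f(X)=f(X+D_\xi X)-f(X)$ for $\fm$-almost every $\xi$. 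A second-order Taylor expansion then gives
\begin{align*}
D_\xi f(X) = f'(X)\,D_\xi X + \tfrac{1}{2} f''(\Theta_\xi)\,(D_\xi X)^2,
\end{align*}
for some $\Theta_\xi$ between $X$ and $X+D_\xi X$ (measurable in $\xi$).

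Next, I would integrate the identity above against $v(\xi)$ with respect to $\fm$ and take expectation, obtaining
\begin{align*}
\E[Xf(X)] = \E\bigl[f'(X)\jb{DX,v}_{\fH}\bigr] + \tfrac{1}{2}\,\E\!\int_{\mathbf{Z}} f''(\Theta_\xi)(D_\xi X)^2 v(\xi)\,\fm(d\xi).
\end{align*}
Rearranging,
\begin{align*}
\E[f'(X)-Xf(X)] = \E\bigl[f'(X)(1-\jb{DX,v}_{\fH})\bigr] - \tfrac{1}{2}\,\E\!\int_{\mathbf{Z}} f''(\Theta_\xi)(D_\xi X)^2 v(\xi)\,\fm(d\xi).
\end{align*}
Absolute-value estimates using $\|f'\|_\infty\leq\sqrt{2/\pi}$ and $\|f''\|_\infty\leq 2$ yield
\begin{align*}
\bigl|\E[f'(X)-Xf(X)]\bigr| \leq \sqrt{2/\pi}\,\bigl\|1-\jb{DX,v}_{\fH}\bigr\|_1 + \int_{\mathbf{Z}} \bigl\|(D_\xi X)^2 v(\xi)\bigr\|_1 \fm(d\xi),
\end{align*}
and taking the supremum over $f\in\F_W$ in the Stein bound \eqref{Stein_bdd} delivers \eqref{MSB1}. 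For the refinement \eqref{MSB2}, I would observe that the duality \eqref{dualR} applied with $F=X\in\dom(D)$ gives $\E[\jb{DX,v}_{\fH}] = \E[X\,\dl(v)] = \E[X^2] = \Var(X)=1$, so the random variable $1-\jb{DX,v}_{\fH}$ is centered. The Cauchy--Schwarz estimate $\|1-\jb{DX,v}_{\fH}\|_1 \leq \|1-\jb{DX,v}_{\fH}\|_2 = \sqrt{\Var(\jb{DX,v}_{\fH})}$ then upgrades \eqref{MSB1} to \eqref{MSB2}.

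The only delicate point is the Taylor step: one must check that $\Theta_\xi$ can be chosen measurably in $(\omega,\xi)$ and that the integrand $f''(\Theta_\xi)(D_\xi X)^2 v(\xi)$ is genuinely integrable against $\PP\otimes\fm$ so that Fubini is legal. Integrability is automatic from $\|f''\|_\infty\leq 2$ together with $\E[\|DX\|_\fH^2]<\infty$ (which holds since $X\in\dom(D)$) and $v\in L^2(\O;\fH)$ via Cauchy--Schwarz, while measurability of $\Theta_\xi$ follows from the integral form $f(X+D_\xi X)-f(X)-f'(X)D_\xi X = (D_\xi X)^2\int_0^1(1-t)f''(X+tD_\xi X)\,dt$, which bypasses the need to pick a mean-value point explicitly. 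Everything else is routine bookkeeping against the Stein bound.
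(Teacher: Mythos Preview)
Your proof is correct and follows essentially the same route as the paper: Stein's bound \eqref{Stein_bdd}, the duality relation \eqref{dualR} with $X=\dl(v)$, the difference rule $D_\xi f(X)=f(X+D_\xi X)-f(X)$, and a second-order Taylor estimate on the remainder (the paper uses the integral form \eqref{FTC} directly, which you also invoke at the end to sidestep measurability of $\Theta_\xi$). The only slight imprecision is your integrability justification for Fubini---Cauchy--Schwarz between $\|DX\|_\fH^2$ and $v$ does not quite control $\int_{\mathbf{Z}}\|(D_\xi X)^2 v(\xi)\|_1\,\fm(d\xi)$---but this is harmless: if that integral is infinite the bound \eqref{MSB1} holds trivially, and otherwise the remainder term is absolutely integrable and the argument goes through.
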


Let us also record a useful fact before moving to the proof of Theorem \ref{PSTU-new}:
for a function $f\in\F_W$ as in \eqref{Stein_bdd}, 
we have

\noi
\begin{align}\label{FTC}
f(x+a) - f(x) - f'(x) a = a \int_0^1 \big( f'(x+ t a) - f'(x) \big) dt
\end{align}

\noi
for any $x, a\in\R$. It follows from $\|f''\|_\infty \leq 2$ that the left side of \eqref{FTC} is bounded by 
$a^2$.

 \begin{proof}[Proof of Theorem \ref{PSTU-new}]
  To obtain a bound 
 for $d_{\rm Wass}( X, Z)$, with $Z\in\cN(0,1)$,
 it suffices to bound $\E[ X f(X) - f'(X)]$  for $f\in\F_W$ as in \eqref{Stein_bdd}.
 Note first that due to $\| f' \|_\infty \leq  \sqrt{2/\pi}$, the above expectation is well defined. 
 Using $X = \dl(v)$ and the duality relation \eqref{dualR}, we can write 
 
 \noi
 \begin{align}
& \E[ X f(X) ]
 =  \E[  \dl(v) f(X) ] = \E\big[ \langle v, Df(X) \rangle_\fH \big] \notag  \\
 &= \E\big[ \langle v, DX  \rangle_\fH  f'(X)  \big] 
 + \E\bigg[  \int_{\mathbf{Z}}  v(\xi)  \big[ f(X + D_\xi X) - f(X) - f'(X) D_\xi X         \big]  \fm(d\xi) \bigg].
 \label{FTC2}
 \end{align}
It follows from \eqref{FTC} with $\|f'' \|_\infty \leq 2$ that 
the second expectation in \eqref{FTC2} is bounded by 

\noi
\begin{align} \label{pre_coup}
\E\bigg[  \int_{\mathbf{Z}}   |v (\xi) |  ( D_\xi X   )^2     \fm(d\xi) \bigg] 
=  \int_{\mathbf{Z}}\|  v(\xi)  ( D_\xi X   )^2 \|_1     \fm(d\xi). 
\end{align}

\noi
It follows that 
\begin{align*}
d_{\rm Wass}( X, Z)
&\leq \sup_{f\in\F_W} \big| \E[ X f(X) - f'(X)]  \big| \\
&\leq  \sqrt{2/\pi} \| 1 - \jb{DX,  v  }_\fH \|_1 + 
 \int_{\mathbf{Z}}\|  v(\xi)  ( D_\xi X   )^2 \|_1     \fm(d\xi).
\end{align*}
Hence, the desired Malliavin-Stein bound  \eqref{MSB1}
is proved, while
the second bound \eqref{MSB2}
follows from Cauchy-Schwarz inequality with
\begin{align}\label{IBPv}
\E[ \jb{DX, v} ] = \E[ X\dl(v)] = \E[ X^2] = 1.
\end{align}
Hence, the proof is completed. 
\qedhere

\end{proof}

\subsection{Multivariate Malliavin-Stein bound}
\label{SEC4_2}
Recall from \eqref{WASS_def} that the Wasserstein distance between 
two random vectors $\mathbf{Y}_1, \mathbf{Y}_2$ on $\R^{d+1}$ is given by 

\noi
\begin{align}\label{WASS_def2}
d_{\rm Wass}( \mathbf{Y}_1, \mathbf{Y}_2 ) =
 \sup\big\{   \big| \E[ h( \mathbf{Y}_1)] -  \E[ h(\mathbf{Y}_2)] \big| : \,  \Lip( h) \leq 1 \big\},
\end{align}

\noi
that is, the supremum runs over all $1$-Lipschitz functions $h: \R^{d+1}\to\R$.
For any $1$-Lipschitz function $h: \R^{d+1}\to\R$, we define $h_\eps(x) =\E[ h( x+ \eps \mathbf{G} ) ]$ 
with $\eps > 0$ and  $\mathbf{G}$ standard normal on $\R^{d+1}$. 
Then, 
$h_\eps$ is continuously differentiable with $\Lip(h_\eps) \leq 1$
and $\| h_\eps - h \|_\infty\to 0$ when $\eps\to 0$,
as one can easily verify. As a result, one can rewrite \eqref{WASS_def2} as

\noi
\begin{align}\label{WASS_def3}
d_{\rm Wass}( \mathbf{Y}_1, \mathbf{Y}_2 ) =
 \sup\big\{   \big| \E[ h( \mathbf{Y}_1)] - 
  \E[ h(\mathbf{Y}_2)] \big| : \,  h\in C^1(\R^{d+1}), \Lip( h) \leq 1 \big\}.
\end{align}

Now let us state   Pimentel's observation that generalizes Stein's lemma. 

\begin{lemma}[\cite{pimentel}]
Let $X$ be a real, integrable random variable
and $\mathbf{Y}=(Y_1, ... , Y_d)$ be a $d$-dimensional 
random vector. Then, 
 $X \sim \cN(0, 1)$ and $X$ is independent of $\mathbf{Y}$   if and only if 
\[
  \bE[\partial_x f(X, \mathbf{Y})]-\bE[Xf(X,\mathbf{Y})]=0,
\]
for all differentiable functions $f:\bR^{1+d}\to \R$ 
with $\dx f(X, \mathbf{Y})  \in L^1(\O)$; see also Lemma 1 and Lemma 2 in \cite{tudor23}.
\end{lemma}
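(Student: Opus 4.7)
The plan is to reduce the multivariate identity to the univariate Stein's lemma (for the forward direction), and to apply the hypothesis to complex exponential test functions to derive a first-order ODE for the joint characteristic function (for the converse direction).

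For the ``only if'' direction, suppose that $X\sim\cN(0,1)$ is independent of $\mathbf{Y}$. Fix a differentiable test function $f:\R^{1+d}\to\R$ with $\dx f(X,\mathbf{Y})\in L^1(\Omega)$. Since $X$ and $\mathbf{Y}$ are independent, Fubini's theorem yields
\[
\E[\dx f(X,\mathbf{Y})]=\int_{\R^d}\E[\dx f(X,\mathbf{y})]\,\PP_{\mathbf{Y}}(d\mathbf{y}),
\]
and the inner expectation is finite for $\PP_{\mathbf{Y}}$-almost every $\mathbf{y}$. For any such $\mathbf{y}$, I would apply the classical univariate Stein's lemma to the absolutely continuous map $x\mapsto f(x,\mathbf{y})$ to conclude $\E[\dx f(X,\mathbf{y})]=\E[Xf(X,\mathbf{y})]$. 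Integrating back over $\mathbf{Y}$ and using independence once more yields the identity in the statement.

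For the ``if'' direction, assume the identity holds for every admissible $f$. I would then plug in $f(x,\mathbf{y})=\cos(sx+\mathbf{t}\cdot\mathbf{y})$ and $f(x,\mathbf{y})=\sin(sx+\mathbf{t}\cdot\mathbf{y})$ for arbitrary $s\in\R$ and $\mathbf{t}\in\R^d$; these are smooth with uniformly bounded partial derivatives, so the assumption $\dx f(X,\mathbf{Y})\in L^1(\Omega)$ is automatic, and integrability of $X$ ensures the right-hand side of the identity is well defined. Combining the two real identities into a complex one produces, for the joint characteristic function $\Phi(s,\mathbf{t}):=\E[e^{i(sX+\mathbf{t}\cdot\mathbf{Y})}]$, the relation
\[
is\,\Phi(s,\mathbf{t})=\E\!\left[X\,e^{i(sX+\mathbf{t}\cdot\mathbf{Y})}\right]=-i\,\dd_s\Phi(s,\mathbf{t}),
\]
i.e., the first-order linear ODE $\dd_s\Phi(s,\mathbf{t})=-s\,\Phi(s,\mathbf{t})$ for each fixed $\mathbf{t}$. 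With initial condition $\Phi(0,\mathbf{t})=\E[e^{i\mathbf{t}\cdot\mathbf{Y}}]=:\Phi_{\mathbf{Y}}(\mathbf{t})$, the unique solution is $\Phi(s,\mathbf{t})=e^{-s^2/2}\,\Phi_{\mathbf{Y}}(\mathbf{t})$. By the injectivity of the Fourier transform, this product structure shows both that $X\sim\cN(0,1)$ and that $X$ is independent of $\mathbf{Y}$.

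The only mildly technical point is the interchange of differentiation in $s$ with the expectation used to identify $-i\,\dd_s\Phi(s,\mathbf{t})$ with $\E[X\,e^{i(sX+\mathbf{t}\cdot\mathbf{Y})}]$; this follows from $X\in L^1(\Omega)$ by dominated convergence applied to the difference quotients in $s$, since the quotient $(e^{ihX}-1)/h$ is bounded in modulus by $|X|$. Beyond this, there is no genuine obstacle: the argument is essentially the classical Fourier-analytic derivation of Stein's characterization of the standard normal, with the random vector $\mathbf{Y}$ playing the role of an inert parameter that is handled transparently by independence (in one direction) and by the factorization of the characteristic function (in the other).
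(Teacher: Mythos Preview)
Your argument is correct. Note, however, that the paper does not supply its own proof of this lemma: it is merely stated with attribution to \cite{pimentel} and \cite{tudor23}, so there is no ``paper proof'' to compare against. Your Fourier-analytic approach for the converse (deriving the ODE $\partial_s\Phi(s,\mathbf{t})=-s\,\Phi(s,\mathbf{t})$ from the identity applied to $\cos$ and $\sin$) is exactly the standard route and matches what is done in the cited references; the forward direction via Fubini and the univariate Stein lemma is likewise the natural argument. One very minor caveat: in the forward direction you implicitly assume that $x\mapsto f(x,\mathbf{y})$ is absolutely continuous in order to invoke the univariate Stein identity, whereas the lemma as stated only asks for $f$ differentiable with $\partial_x f(X,\mathbf{Y})\in L^1$; this is a wrinkle in the formulation rather than in your strategy, and it disappears for the class of test functions actually used in the paper (namely $f\in\mathcal{A}_W$, which are $C^2$ with bounded derivatives).
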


Heuristically speaking, when 
$ \bE[\partial_x f(X, \mathbf{Y})]-\bE[ Xf(X,\mathbf{Y})] \approx 0$
for a large enough family of test functions $f$, 
we have approximate normality for $X$
and the approximate independence between $X$ and $\mathbf{Y}$.
To measure the approximate normality and the approximate independence, 
we can use the   Wasserstein distance
$
d_{\rm Wass}\big(  (X, \mathbf{Y}),   (Z', \mathbf{Y})   \big)
$
with $Z'\sim\cN(0,1)$ independent of $\mathbf{Y}$.
See also the discussion before Theorem \ref{thm3}.
As in the Stein's method, one can consider the following (generalized) 
Stein's equation: for $(x,\pmb{y}) \in \bR^{1+d}$,

\noi
\begin{align}
\label{Stein2}
  \partial_x f(x,\pmb{y})-x f (x,\pmb{y})=h(x,\pmb{y})-\bE[h(Z,\pmb{y})]
  \quad\text{with  $Z \sim \cN(0,1)$},
\end{align}

\noi
where $h\in C^1(\R^{1+d})$ with $\Lip(h)\leq 1$ is given
and $f$ is the unknown. After we solve the above Stein's equation \eqref{Stein2},
we can replace $(x,\pmb{y})$ by $(X,  \mathbf{Y})$ and take expectations at both sides
 of \eqref{Stein2}, to get
 
 \noi
\begin{align}
\label{Stein3}
  \bE[ \partial_x f (X,\mathbf{Y})]-\bE[X f(X,\mathbf{Y})]
=\bE[h(X,\mathbf{Y})]-\bE[h(Z',\mathbf{Y})],
\end{align}

\noi
where $Z' \sim \cN(0,1)$ is independent of $\mathbf{Y}$.
Note that for any fixed $\pmb{y}\in\R^d$,
the above Stein's equation is simply the one in \eqref{Stein-eq}.
As a result, we immediately deduce from 
Section \ref{SEC4_1} that 

\noi 
\begin{align}\label{fh2}
f_h(x, \pmb{y}) 
= e^{\frac{x^2}{2}}\int_{-\infty}^x \big( h(t, \pmb{y}) - \E[ h(Z, \pmb{y})] \big) e^{\frac{-t^2}{2}}dt
\quad\text{with  $Z \sim \cN(0,1)$}
\end{align}
solves the Stein's equation \eqref{Stein2} and the Stein's solution in \eqref{fh2} satisfies 
 
 \noi
 \begin{align}\label{Stein_sol2}
\text{$\| f_h \|_\infty \leq 2$,   $\| \partial_x f_h \|_\infty \leq \sqrt{2/\pi}$,  
and $\|\partial^2_{xx}f_h \|_\infty \leq 2$}.
 \end{align}
Now taking partial derivative $\partial_{y_j} f_h$ with \eqref{fh2},
we have 
\[
\partial_{y_j} f_h(x, \pmb{y}) 
= e^{\frac{x^2}{2}}\int_{-\infty}^x \big(  \partial_{y_j} h(t, \pmb{y}) - \E[  \partial_{y_j} h(Z, \pmb{y})] \big) e^{\frac{-t^2}{2}}dt,
\]

\noi
where $\phi(t) := \partial_{y_j} h(t, \pmb{y})$ is uniformly bounded by $1$
for any $\pmb{y}\in\R^d$ and $j\in\{1,..., d\}$.
In view of the standard Stein's method,
$\partial_{y_j} f_h(x, \pmb{y})$ is the Stein's solution to 
the Stein's equation \eqref{Stein-eq} with $h$ replaced by $\phi$,
so that we have

\noi
\begin{align} \label{2ndbdd}
\| \partial_{y_j} f_h \|_\infty \leq  \sqrt{2\pi}
\quad
{\rm and}\quad
\| \partial_x \partial_{y_j} f_h  \|_\infty \leq  4;
\end{align}

\noi
see, e.g., Lemma 2.5 in the survey \cite{Ross11}. Let us summarize 
the above discussion in the following lemma. See also \cite[Lemma 5.1]{pimentel}
and \cite[Proposition 1]{tudor23}.
We would like to point out that 
the second bound in \eqref{2ndbdd} is new (and of independent interest although 
we do not need it in this work)
and our argument avoids any direct  computations 
done in \cite{pimentel,tudor23}.

\begin{lemma}[Stein's bound] \label{lem_bdd}
{\rm (i)} Consider the multivariate Stein's equation \eqref{Stein2} with $h\in C^1(\R^{1+d})$
and $\Lip(h) \leq 1$. Then, $f_h$ given in \eqref{fh2} solves  \eqref{Stein2} and it satisfies
\eqref{Stein_sol2} and \eqref{2ndbdd}.

\smallskip

\noi
{\rm (ii)} Let $X$ be a real integrable random variable and $\mathbf{Y}$ be any
random vector on $\R^d$. Then,

\noi
\begin{align}\label{Stein_bdd3}
d_{\rm Wass}\big(  (X, \mathbf{Y}),  (Z', \mathbf{Y})     \big)
\leq \sup_{f\in\mathcal{A}_W} \big|    \bE[ \partial_x f (X,\mathbf{Y})]-\bE[X f(X,\mathbf{Y})] \big|,
\end{align}

\noi
where $Z'\sim\cN(0,1)$ is independent of $\mathbf{Y}$,
$\mathcal{A}_W$ is the set of twice-differentiable functions 
$(x, \pmb{y})\in\R^{1+d}\mapsto f(x, \pmb{y})\in\R$ such that 
 $\| f \|_\infty \leq 2$,   $\| \partial_x f \|_\infty \leq \sqrt{2/\pi}$,  
 $\|\partial^2_{xx}f \|_\infty \leq 2$, 
 $\| \partial_{y_j} f \|_\infty \leq  \sqrt{2\pi}$,
and $\| \partial_x \partial_{y_j} f  \|_\infty \leq  4$
for any $j\in\{1,..., d\}$.
\end{lemma}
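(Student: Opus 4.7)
The plan is to reduce part (i) to the classical univariate Stein's equation by exploiting the parametric dependence on $\pmb{y}$, and then to obtain part (ii) as a direct consequence of part (i) combined with the definition \eqref{WASS_def3} of the Wasserstein distance.

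For part (i), I would first observe that for each fixed $\pmb{y} \in \R^d$, the function $h_{\pmb{y}}(x) := h(x, \pmb{y})$ is $1$-Lipschitz in $x$ (uniformly in $\pmb{y}$, since $h$ is $1$-Lipschitz jointly), and equation \eqref{Stein2} reduces to the classical univariate Stein equation \eqref{Stein-eq} for $h_{\pmb{y}}$. Hence the formula \eqref{fh2}, viewed as a function of $x$ with $\pmb{y}$ held fixed, is the unique bounded solution, and the classical bounds in \eqref{FhSol} yield \eqref{Stein_sol2} immediately.

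For \eqref{2ndbdd}, the key idea is to differentiate \eqref{fh2} in $y_j$ under the integral sign (and under the expectation $\E[h(Z, \pmb{y})]$), which is justified by dominated convergence since $h \in C^1$ has uniformly bounded partial derivatives and the Gaussian weight provides the required integrability. This yields
\[
\partial_{y_j} f_h(x, \pmb{y}) = e^{x^2/2} \int_{-\infty}^{x} \bigl( \phi(t) - \E[\phi(Z)] \bigr) e^{-t^2/2}\, dt,
\]
with $\phi(t) := \partial_{y_j} h(t, \pmb{y})$. Since $|\nabla h| \leq 1$, we have $\|\phi\|_\infty \leq 1$. The displayed expression is precisely the Stein solution associated with the bounded (not necessarily Lipschitz) test function $\phi$. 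Invoking the standard bounds for Stein's solution with bounded test function from \cite[Lemma 2.5]{Ross11} — namely $\|f\|_\infty \leq \sqrt{\pi/2}\,\|\phi - \E\phi(Z)\|_\infty$ and $\|f'\|_\infty \leq 2\|\phi - \E\phi(Z)\|_\infty$, together with $\|\phi - \E\phi(Z)\|_\infty \leq 2\|\phi\|_\infty$ — gives the constants $\sqrt{2\pi}$ and $4$ claimed in \eqref{2ndbdd}. This is the clean shortcut that bypasses the explicit computations in \cite{pimentel, tudor23}.

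For part (ii), I would argue by the standard Stein recipe. By \eqref{WASS_def3}, it suffices to supremize over $h \in C^1(\R^{1+d})$ with $\Lip(h) \leq 1$. For such $h$, part (i) provides a solution $f_h \in \mathcal{A}_W$ of \eqref{Stein2}. Substituting $(x, \pmb{y}) \mapsto (X, \mathbf{Y})$ and taking expectations produces \eqref{Stein3}, from which \eqref{Stein_bdd3} follows by taking the supremum over admissible $h$. The main technical obstacle throughout is justifying the interchange of differentiation with the defining integral of $f_h$ and with $\E[h(Z, \pmb{y})]$; once that is done, every other step is a direct reduction to the one-dimensional theory recalled earlier in the paper.
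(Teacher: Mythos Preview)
Your proposal is correct and follows essentially the same route as the paper: reduce \eqref{Stein2} to the univariate Stein equation by freezing $\pmb{y}$ to obtain \eqref{Stein_sol2}, then differentiate \eqref{fh2} in $y_j$ to recognize $\partial_{y_j} f_h$ as the Stein solution for the bounded test function $\phi=\partial_{y_j}h$ and invoke \cite[Lemma~2.5]{Ross11} for \eqref{2ndbdd}; part (ii) then follows from \eqref{WASS_def3}, \eqref{Stein3}, and part (i). Your write-up is in fact slightly more detailed than the paper's (you spell out the constants $\sqrt{\pi/2}\cdot 2$ and $2\cdot 2$ and flag the dominated-convergence justification), but the argument is the same.
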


\begin{proof} We only prove (ii) here. Indeed, 
the bound \eqref{Stein_bdd3} follows from 
\eqref{WASS_def3}, \eqref{Stein3},
and part (i) of  Lemma \ref{lem_bdd}. \qedhere

\end{proof}

Next, we present the multivariate Malliavin-Stein bound in the Poisson setting
that measures both the approximate normality of one marginal
and the approximate independence between  components.
See  \cite[Theorem 1]{tudor23} for an analogous result in the  Gaussian setting.  

\begin{theorem}
\label{stein-th2}
Let $X=\dl(v)\in\dom(D)$ for some $v\in\dom(\dl)$
such that $\Var(X) =1$.
 Consider $\mathbf{Y}=(Y_1,\ldots,Y_d)$
with $Y_j\in\dom(D)$ for each $j\in\{1,..., d\}$.
Let $Z'\sim\cN(0,1)$ be independent of $\mathbf{Y}$, then 
we have 

\noi
\begin{align}
\label{MSB_new}
\begin{aligned}
d_{\rm Wass}\big(  (X, \mathbf{Y}),  ( Z', \mathbf{Y})     \big)
& \leq 
 \sqrt{2/\pi}  
 \| 1 -\langle DX, v \rangle_{\fH} \|_1   
 +  \int_{\mathbf{Z}}  \big\| v(\xi)   ( D_\xi X )^2  \big\|_1 \, \fm(d\xi)
\\
&\qquad\qquad + 
 \sqrt{2\pi} \sum_{j=1}^d   \int_{\mathbf{Z}}  \big\| v(\xi)   ( D_\xi Y_j )  \big\|_1 \, \fm(d\xi) .
\end{aligned}
\end{align}
Moreover, one can further bound 
$ \| 1-\langle DX, v \rangle_{\fH} \|_1$ by $\sqrt{ \Var(\langle DX, v \rangle_{\fH} )}$.

\end{theorem}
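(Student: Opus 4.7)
The proof plan is to adapt the univariate argument underlying Theorem \ref{PSTU-new} to the multivariate Stein framework given by Lemma \ref{lem_bdd}. First, I would fix an arbitrary test function $f\in\mathcal{A}_W$ and, observing that all partial derivatives of $f$ are uniformly bounded (hence $f$ is Lipschitz on $\R^{1+d}$), invoke Lemma \ref{chain} to conclude that $f(X,\mathbf{Y})\in\dom(D)$ with the difference-rule expression
\begin{align*}
D_\xi f(X,\mathbf{Y}) = f(X+D_\xi X, \mathbf{Y}+D_\xi\mathbf{Y})-f(X,\mathbf{Y}).
\end{align*}
Then, using $X=\dl(v)$ together with the duality relation \eqref{dualR}, I would rewrite
\begin{align*}
\E[Xf(X,\mathbf{Y})] = \E[\dl(v)f(X,\mathbf{Y})] = \E\bigl[\langle v, Df(X,\mathbf{Y})\rangle_\fH\bigr].
\end{align*}

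Next, I would carry out a two-stage Taylor-type decomposition of the integrand inside the $\fH$-inner product. Writing $\pmb{b}_\xi=D_\xi\mathbf{Y}$ and $a_\xi=D_\xi X$, I would split
\begin{align*}
f(X+a_\xi,\mathbf{Y}+\pmb{b}_\xi)-f(X,\mathbf{Y})-\partial_x f(X,\mathbf{Y})\,a_\xi = R_1(\xi)+R_2(\xi),
\end{align*}
where
\begin{align*}
R_1(\xi) &= f(X+a_\xi,\mathbf{Y})-f(X,\mathbf{Y})-\partial_x f(X,\mathbf{Y})\,a_\xi,\\
R_2(\xi) &= f(X+a_\xi,\mathbf{Y}+\pmb{b}_\xi)-f(X+a_\xi,\mathbf{Y}).
\end{align*}
Exactly as in \eqref{FTC}, the bound $\|\partial_{xx}^2 f\|_\infty\leq 2$ yields $|R_1(\xi)|\leq a_\xi^2=(D_\xi X)^2$, while a one-step interpolation together with $\|\partial_{y_j} f\|_\infty\leq\sqrt{2\pi}$ gives $|R_2(\xi)|\leq\sqrt{2\pi}\sum_{j=1}^d |D_\xi Y_j|$. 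Collecting, I obtain
\begin{align*}
\E[\partial_x f(X,\mathbf{Y})]-\E[Xf(X,\mathbf{Y})] = \E\!\left[\bigl(1-\langle DX,v\rangle_\fH\bigr)\partial_x f(X,\mathbf{Y})\right] - \E\!\int_{\mathbf{Z}} v(\xi)\bigl[R_1(\xi)+R_2(\xi)\bigr]\fm(d\xi).
\end{align*}

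Bounding the first term by $\sqrt{2/\pi}\,\|1-\langle DX,v\rangle_\fH\|_1$ via $\|\partial_x f\|_\infty\leq\sqrt{2/\pi}$, and the two remainder integrals respectively by $\int_{\mathbf{Z}}\|v(\xi)(D_\xi X)^2\|_1\fm(d\xi)$ and $\sqrt{2\pi}\sum_{j=1}^d\int_{\mathbf{Z}}\|v(\xi)D_\xi Y_j\|_1\fm(d\xi)$, then taking the supremum over $f\in\mathcal{A}_W$ and appealing to Lemma \ref{lem_bdd}(ii), yields \eqref{MSB_new}. For the final refinement, the identity $\E[\langle DX,v\rangle_\fH]=\E[X\dl(v)]=\E[X^2]=1$ from \eqref{IBPv} lets one upgrade the $L^1$-norm to the $L^2$-norm via Jensen's inequality, producing the variance bound.

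The main obstacle is really just one of bookkeeping: one must ensure the two-stage decomposition isolates the crucial $\partial_x f(X,\mathbf{Y})$ term (so that the central ``Stein factor'' $1-\langle DX,v\rangle_\fH$ emerges with the small multiplier $\sqrt{2/\pi}$ rather than $\sqrt{2\pi}$), while the $\mathbf{Y}$-direction contribution is picked up only at first order by the cruder bound $\|\partial_{y_j} f\|_\infty\leq\sqrt{2\pi}$. Handling both directions simultaneously by a naive one-step Lipschitz bound would lose the quadratic control $(D_\xi X)^2$ that is essential to make \eqref{MSB_new} useful for SPDE applications; the ordering $R_2$ first, then $R_1$, is designed precisely to preserve both features.
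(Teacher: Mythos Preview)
Your proposal is correct and follows essentially the same approach as the paper: both apply duality to $X=\dl(v)$, use the difference rule from Lemma~\ref{chain}, and split $D_\xi f(X,\mathbf{Y})-\partial_x f(X,\mathbf{Y})D_\xi X$ into the same two pieces (your $R_1,R_2$ are precisely the paper's terms \eqref{coup3b} and \eqref{coup3a}), bounding them via $\|\partial_{xx}^2 f\|_\infty\le 2$ and $\|\partial_{y_j}f\|_\infty\le\sqrt{2\pi}$ respectively before invoking Lemma~\ref{lem_bdd}(ii). The final variance refinement via \eqref{IBPv} is also identical.
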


\begin{remark}\rm
\label{rem_MSB_new}
In view of Theorem \ref{PSTU-new},
the first two terms in \eqref{MSB_new} quantifies the approximate
normality of $X$, while the third term in  \eqref{MSB_new} 
quantifies the approximate independence between $X$
and $\mathbf{Y}$.
\end{remark}

\begin{proof}[Proof of Theorem \ref{stein-th2}]

Let $f\in\mathcal{A}_W$ be given as in Lemma \ref{lem_bdd}. We need to bound 
the following expectation 
\begin{align} \label{coup1}
 \bE[ \partial_x f (X,\mathbf{Y})]   -\bE[X f(X,\mathbf{Y})]. 
\end{align}

First using the duality relation \eqref{dualR},
we have
 \begin{align} 
 \E[X f(X,\mathbf{Y})] 
 & = \E[ \dl(v) f(X,\mathbf{Y})]  = \E\big[ \jb{ v, D f(X,\mathbf{Y}) }_\fH \big]  \notag  \\
 &=  \E\big[ \jb{ v, DX}_\fH \partial_x f(X, \mathbf{Y}) \big]
  +  \E\big[ \jb{ v,  Df(X, \mathbf{Y}) -\partial_x f(X, \mathbf{Y})   DX}_\fH \big].  \label{coup2}
 \end{align}
 Then, the first term in \eqref{coup2} can be coupled with $ \bE[ \partial_x f (X,\mathbf{Y})]$
 in \eqref{coup1} so that we have 
 
 \noi
 \begin{align} \label{coup1a}
  \begin{aligned}
 \big|  \bE[ \partial_x f (X,\mathbf{Y})]  -  \E\big[ \jb{ v, DX}_\fH \partial_x f(X, \mathbf{Y}) \big] \big|
& \leq \sqrt{2/\pi}\, \big\| 1 -  \jb{ v, DX}_\fH  \big\|_1 \\
&\leq  \sqrt{2/\pi}\,  \sqrt{ \Var( \jb{ v, DX}_\fH  )},
 \end{aligned}
 \end{align}
 
 \noi
 where we used $\| \partial_x f\|_\infty \leq  \sqrt{2/\pi}$
 and Cauchy-Schwarz inequality with  \eqref{IBPv}.
 It remains to bound the second expectation in \eqref{coup2}.
 
 Recall from   Lemma \ref{chain} that for $\fm$-almost every $\xi\in\mathbf{Z}$,
 we can write 
 
 \noi
 \begin{align}
& D_\xi f(X, \mathbf{Y}) -\partial_x f(X, \mathbf{Y})   D_\xi X \notag \\
 =&  f(X + D_\xi X, Y_1 + D_\xi Y_1, ... , Y_d + D_\xi Y_d) -   f(X, \mathbf{Y}) -
 \partial_x f(X, \mathbf{Y})   D_\xi X  \notag \\
 = & f(X + D_\xi X, Y_1 + D_\xi Y_1, ... , Y_d + D_\xi Y_d) - f( X + D_\xi X, \mathbf{Y} )   \label{coup3a} \\
 &\quad  +  f( X + D_\xi X, \mathbf{Y} )
 -   f(X, \mathbf{Y}) -
 \partial_x f(X, \mathbf{Y})   D_\xi X.   \label{coup3b}
 \end{align}
 
 \noi
 Using  $\| \partial_{y_j} f \|_\infty \leq  \sqrt{2\pi}$ for each $j$,
 we can deal with  the term in \eqref{coup3a} as follows:
 note that for any $\pmb{a}, \pmb{b}\in\R^{1+d}$ with $a_1=b_1$,
 \begin{align*}
| f(\pmb{b}) - f(\pmb{a}) | 
&= \bigg| \sum_{j=1}^{1+d} \int_0^1   \partial_j f( \pmb{a} + t(\pmb{b}-\pmb{a})) (b_j-a_j) dt \bigg| \\
&\leq \sqrt{2\pi} \sum_{j=2}^{1+d} |b_j-a_j|,
 \end{align*}
 
 \noi
 from which we deduce that 
 
 \noi
 \begin{align*}
|  f(X + D_\xi X, Y_1 + D_\xi Y_1, ... , Y_d + D_\xi Y_d) - f( X + D_\xi X, \mathbf{Y} ) |
\leq \sqrt{2\pi} \sum_{j=1}^d | D_\xi Y_j|,
 \end{align*}
 and thus
 
 \noi
  \begin{align}   \label{coup3c}
    \begin{aligned}  
&\left|  \E\bigg(  \int_{\mathbf{Z}} v(\xi)  \Big[  f(X + D_\xi X, Y_1 + D_\xi Y_1, ... , Y_d + D_\xi Y_d)
      - f( X + D_\xi X, \mathbf{Y} )  \Big] \fm(d\xi) \bigg) \right| \\
& \leq \sqrt{2\pi} \sum_{j=1}^d   \int_{\mathbf{Z}}  \big\| v(\xi)   ( D_\xi Y_j )  \big\|_1 \, \fm(d\xi).
 \end{aligned}
 \end{align}

\noi
For the term in \eqref{coup3b}, the same arguments in \eqref{FTC2} and 
\eqref{pre_coup} with $\|\partial^2_{xx} f\|_\infty \leq 2$
lead to 

  \noi
  \begin{align}   \label{coup3d}
    \begin{aligned}  
& \left| \E\bigg(  \int_{\mathbf{Z}} v(\xi)  \Big[     f( X + D_\xi X, \mathbf{Y} )
 -   f(X, \mathbf{Y}) -
 \partial_x f(X, \mathbf{Y})   D_\xi X   \Big] \fm(d\xi) \bigg) \right| \\
& \leq    \int_{\mathbf{Z}}  \big\| v(\xi)   ( D_\xi X )^2  \big\|_1 \, \fm(d\xi).
 \end{aligned}
 \end{align}
 Therefore, it follows from Lemma \eqref{lem_bdd}
 with  \eqref{coup1}, \eqref{coup2}, \eqref{coup1a}, \eqref{coup3c},
 and \eqref{coup3d} that 

\noi
\begin{align}
 \notag
\begin{aligned}
d_{\rm Wass}\big(  (X, \mathbf{Y}),  ( Z', \mathbf{Y})     \big)
& \leq 
    \sqrt{\frac2\pi}  \cdot
 \| 1 -\langle DX, v \rangle_{\fH} \|_1   + 
 \sqrt{2\pi} \sum_{j=1}^d   \int_{\mathbf{Z}}  \big\| v(\xi)   ( D_\xi Y_j )  \big\|_1 \, \fm(d\xi) \\
&\qquad\qquad +  \int_{\mathbf{Z}}  \big\| v(\xi)   ( D_\xi X )^2  \big\|_1 \, \fm(d\xi).
\end{aligned}
\end{align}

\noi
This proves the bound \eqref{MSB_new}.

 Finally, recalling the relation \eqref{IBPv}, 
 we can further bound 
$ \| 1 -\langle DX, v \rangle_{\fH} \|_1$ by $\sqrt{ \Var(\langle DX, v \rangle_{\fH} )}$;
see also \eqref{coup1a}.

\smallskip

Hence, the proof is completed. 
\qedhere

\end{proof}

\section{Main proofs} \label{SEC5A}

This section is mainly devoted to proving our main Theorem \ref{thm_main}
and Theorem \ref{thm3}.
In Section \ref{SEC5_1}, we first establish 
the strict stationarity of the solution to \eqref{SWE};
in Section \ref{SEC5_2}, 
we present the proof of  Theorem \ref{thm_main}
while the last Section \ref{SEC5_3} contains
the proof of Theorem \ref{thm3}.

\subsection{Stationarity of the solution}
\label{SEC5_1}

Recall from \eqref{def-L} and Remark \ref{REM23} that 
\[
L_t(B) = L([0,t]\times B) = \int_0^t \int_B \int_{\R_0} z \wh{N}(ds, dy, dz),
\]
$(t, B)\in\R_+\times \cB_b(\R)$,
defines  a {\it martingale measure}; see, e.g., \cite[Section 2]{dalang99}.
The following definition
   was introduced in \cite[Definition 5.1]{dalang99}.

\begin{definition}
Let $g=\{g(t,x): (t,x)\in\R_+\times \R\}$ be a random field  defined on the same probability space as 
$L$.
We say that $g$  has property {\rm (S)}  with respect to $L$,
 if the finite-dimensional distributions of $(g^{(z)}, L^{(z)} )$
 do not depend on $z\in \R$,
where 
\begin{align}\label{XLz}
\text{$g^{(z)}(t,x)=g(t,x+z)$ and $L_t^{(z)}(B)=L_t(B+z)$}
\end{align}

\noi
for any $t\in\R_+$, $x\in\R$, and $B\in\cB_b(\R)$ with $B+z:=\{ x+z: x\in B\}$.
\end{definition}

The goal of this subsection is to show that the solution $u(t,x)$ to \eqref{SWE}
satisfies the property (S) with respect to $L$. 
Let us first establish a weaker result.

\begin{lemma}
\label{spat-homog}
The process $L$ is spatially homogeneous, 
in the sense that the finite-dimensional distributions 
of $L^{(z)} = \{L_t^{(z)}(B)\}_{(t, B)\in\R_+  \times \cB_b(\bR)}$ do not depend on $z$.
\end{lemma}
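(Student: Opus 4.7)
The plan is to deduce spatial homogeneity from the fact that the intensity measure $\fm=\text{Leb}\times\nu$ is invariant under the spatial shift $\tau_z\colon(s,y,w)\mapsto(s,y+z,w)$. Since $N$ is a Poisson random measure with intensity $\fm$ and $\fm\circ\tau_z^{-1}=\fm$, the pushforward $N\circ\tau_z^{-1}$ is again a Poisson random measure with the same intensity, hence $N\circ\tau_z^{-1}\stackrel{d}{=}N$ (as $\bfN_\s$-valued random elements). By a change of variables in the It\^o integral,
\[
L_t^{(z)}(B)=L_t(B+z)=\int_0^t\int_{B+z}\int_{\R_0} w\,\wh{N}(ds,dy,dw)=\int_0^t\int_B\int_{\R_0} w\,\wh{N\circ\tau_z^{-1}}(ds,dy,dw),
\]
and the equality of laws of $N$ and $N\circ\tau_z^{-1}$ transfers to equality of finite-dimensional laws of the two martingale measures indexed by $(t,B)\in\R_+\times\cB_b(\R)$.

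An equivalent and perhaps more elementary route that I would follow in detail is via characteristic functions. First I would verify the claim on a disjoint family: given $(t_1,B_1),\ldots,(t_n,B_n)$ such that the rectangles $[0,t_i]\times B_i$ are pairwise disjoint, the independence property from part (d) in the Introduction together with the explicit one-point characteristic function
\[
\E[e^{i\lambda L_t(B)}]=\exp\Bigl\{t\,{\rm Leb}(B)\int_{\R_0}(e^{i\lambda w}-1-i\lambda w)\,\nu(dw)\Bigr\}
\]
gives
\[
\E\Bigl[\exp\Bigl(i\sum_{k=1}^n\lambda_k L_{t_k}(B_k)\Bigr)\Bigr]=\prod_{k=1}^n\exp\Bigl\{t_k\,{\rm Leb}(B_k)\int_{\R_0}(e^{i\lambda_k w}-1-i\lambda_k w)\,\nu(dw)\Bigr\},
\]
which depends on the spatial sets only through ${\rm Leb}(B_k)={\rm Leb}(B_k+z)$ and is therefore invariant under the shift by $z$.

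To reduce the general case to this, I would take an arbitrary family $(t_1,B_1),\ldots,(t_n,B_n)$ and partition $\bigcup_i[0,t_i]\times B_i$ into finitely many pairwise disjoint atoms $A_1,\ldots,A_m$ obtained by intersecting the rectangles and their complements within the union. Then each $L_{t_i}(B_i)$ is a finite sum $\sum_{A_j\subset[0,t_i]\times B_i}L(A_j)$, the joint characteristic function of $(L(A_1),\ldots,L(A_m))$ is a product of the individual ones (by independence), and the spatial translate decomposes in exactly the same way with atoms $A_j+(0,z,0)$ of identical Lebesgue measure. This shows the joint characteristic function of $(L_{t_i}^{(z)}(B_i))_{i\leq n}$ is independent of $z$, which is what we need.

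The only mild technical point, which I would not belabour, is justifying the change of variables at the level of the It\^o integral (or equivalently the decomposition into disjoint atoms when the rectangles overlap). This is routine: verify the identity first for elementary integrands of the form \eqref{elem-V}, where it reduces to translating the underlying point configuration, and then extend to the space of predictable $L^2$-integrands by the isometry \eqref{isometry}, which is preserved under $\tau_z$ since $\fm$ is.
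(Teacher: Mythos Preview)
Your proposal is correct. The first paragraph is essentially the paper's own proof: define the shifted Poisson random measure $N^{(z)}$ (your $N\circ\tau_z^{-1}$), observe it has the same intensity $\fm$ by translation invariance of Lebesgue measure and hence the same law as $N$, then perform the change of variables in the It\^o integral to conclude that any finite linear combination $\sum_j \alpha_j L_{t_j}^{(z)}(B_j)$ has the same law as $\sum_j \alpha_j L_{t_j}(B_j)$.

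Your alternative route via characteristic functions and disjoint atoms is also valid and entirely self-contained---it avoids invoking the distributional equality of the two Poisson measures as $\bfN_\s$-valued random elements and instead computes joint characteristic functions directly from the independence and infinite divisibility in item (d) of the Introduction. The paper does not take this route; it is slightly longer but arguably more elementary, since it never leaves the level of finite-dimensional distributions. Either approach is fine here.
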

 
\begin{proof}
Define $N^{(z)}(A \times B \times C)=N(A \times (B+z) \times C)$
 for bounded Borel sets $A \subset \R_{+}$,  $B \subset \R$, and $C \subset \R_0$. 
 Due to the translational invariance of Lebesgue measure on $\R$,
 we see that   $N^{(z)}$ is a Poisson random measure on $\mathbf{Z} =\R_+\times\R\times\R_0$
 with the intensity measure $\fm$.
 That is, $N^{(z)}$ has  the same distribution as $N$.
 It follows that
\begin{align*}
L_t^{(z)}(B)=&\int_0^t \int_{\bR}\ind_{B+z}(x)L(ds,dx)=\int_0^t \int_{\bR} \int_{\bR_0}\ind_{B}(x-z) z_1 \wh{N}(ds,dx,dz_1)\\
= &\int_0^t \int_{\bR} \int_{\bR_0}\ind_{B}(x) z_1 \wh{N}^{(z)}(ds,dx,dz_1)\\
 \stackrel{{\rm (law)}}{=} & 
\int_0^t \int_{\bR} \int_{\bR_0}\ind_{B}(x) z_1 \wh{N}(ds,dx,dz_1)=L_t(B).
\end{align*}

\noi
By the same argument, we can show that 
  for any $t_1, \ldots,t_k \in \bR_{+}$ and $B_1,\ldots,B_k \in \cB_b(\bR)$,
\[
\sum_{j=1}^k \al_j  L_{t_j}^{(z)}(B_j)  \stackrel{{\rm (law)}}{=}  \sum_{j=1}^k \al_j  L_{t_j}(B_j) 
\]
for any $\al_1, ... , \al_k\in\R$, $k\in\N_{\geq 1}$. This proves
the desired spatial homogeneity of $L$. \qedhere

\end{proof}

\begin{lemma}
\label{lem:stat}
Let  $u$ denote the solution to  the  equation \eqref{SWE}.
Then, the random field $u$  satisfies the  property {\rm (S)}.
In particular, $u$ is strictly {\rm(}spatially{\rm)} stationary in the sense that 
 for any $(t_1,x_1),\ldots,(t_k,x_k)\in \bR_{+} \times \bR$ $(k\in\N_{\geq 1})$ and for any $z \in \bR$,
\begin{equation}
\label{station}
\big(u(t_1,x_1+z),\ldots,u(t_k,x_k+z)\big) \stackrel{{\rm (law)}}{=} \big(u(t_1,x_1),\ldots,u(t_k,x_k)\big).
\end{equation}
\end{lemma}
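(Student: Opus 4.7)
The plan is to first establish property (S) for the Picard iterates $u_n$ from \eqref{nPicard} by induction on $n$, and then transfer the property to the limit $u$ using the uniform $L^2$-convergence recorded in \eqref{unic3}. The key inputs will be the spatial homogeneity of the driving noise from Lemma \ref{spat-homog} (i.e., $L^{(z)} \stackrel{{\rm (law)}}{=} L$ as martingale measures) together with a translation identity for the It\^o integral against $L$.

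As a first step, I would verify the following translation identity: for any predictable integrand $f$ satisfying $\bE \int_0^t \int_\R |f(s,y)|^2 \, ds \, dy < \infty$ and any $z \in \R$,
\begin{align*}
\int_0^t \int_\R f(s, y-z) \, L(ds, dy) = \int_0^t \int_\R f(s, y) \, L^{(z)}(ds, dy).
\end{align*}
This is immediate for elementary integrands $f = X \ind_{(a,b]}(s) \ind_B(y)$, where both sides reduce to $X \cdot L^{(z)}((t \wedge a, t \wedge b] \times B)$, and extends to the full class of predictable integrands by the isometry \eqref{m_iso}.

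Next, applying this identity with $f(s,y) = G_{t-s}(x-y) \sigma(u_n(s, y+z))$ and using the shift invariance $G_{t-s}(x+z-y) = G_{t-s}(x-(y-z))$, a direct manipulation of \eqref{nPicard} would yield
\begin{align*}
u_{n+1}^{(z)}(t,x) = u_{n+1}(t, x+z) = 1 + \int_0^t \int_\R G_{t-s}(x-y) \sigma(u_n^{(z)}(s,y)) \, L^{(z)}(ds, dy).
\end{align*}
In other words, $u_n^{(z)}$ is obtained from $L^{(z)}$ by exactly the same measurable recipe that produces $u_n$ from $L$ (the filtrations generated by $L$ and $L^{(z)}$ coincide, so there is no predictability obstruction). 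An induction on $n$ with trivial base case $u_0 \equiv 1 \equiv u_0^{(z)}$, combined with Lemma \ref{spat-homog}, then yields
\begin{align*}
(u_n^{(z)}, L^{(z)}) \stackrel{{\rm (law)}}{=} (u_n, L)
\end{align*}
at the level of finite-dimensional distributions.

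Finally, the passage to the limit is routine: by \eqref{unic3}, for any finite collection of points $(t_i, x_i)$, the vector $(u_n(t_i, x_i))_i$ converges to $(u(t_i, x_i))_i$ in $L^2$, while the finite-dimensional marginals of $L$ do not depend on $n$; the same holds for the shifted vectors with $L^{(z)}$ in place of $L$. Since joint convergence in probability preserves equality in distribution, property (S) will transfer to $u$, and the stationarity relation \eqref{station} follows by projecting out the $L$-coordinates. I expect the main technical point to be the rigorous verification of the translation identity for the stochastic integral against the martingale measure $L$; once that is in place, the remainder of the argument reduces to a straightforward combination of the identity, the recursion \eqref{nPicard}, and Lemma \ref{spat-homog}.
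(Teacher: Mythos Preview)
Your proposal is correct and follows essentially the same approach as the paper: induction on the Picard iterates via a translation identity for the It\^o integral (the paper's relation \eqref{homog-int}), with the base case supplied by Lemma~\ref{spat-homog}, and passage to the limit using \eqref{unic3}. The only cosmetic difference is that the paper carries out the induction step by writing an explicit linear combination $\sum_j[\alpha_j u_{n+1}^{(z)}(t_j,x_j)+\beta_j L^{(z)}(s_j,B_j)]$ and checking the equality in law directly, whereas you phrase it as ``same measurable recipe applied to $L^{(z)}$''; both are valid and amount to the same argument.
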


\begin{proof}
Let $(u_n)_{n\geq 0}$ be the Picard iteration sequence given by \eqref{nPicard}.
Since $u_n(t,x) \to u(t,x)$ in $L^2(\O)$ for all $(t,x)\in \R_{+}\times \R$, 
it suffices to show that $u_n$ satisfies  the property (S) 
 for any $n \geq 0$.
We will prove this by induction on $n\geq 0$.

For $n=0$, the result is clear, since $u_0=1$ and     $L$ is spatially homogeneous
in view of  Lemma \ref{spat-homog}.
For the induction step, we use the same argument as in the proof of 
\cite[Lemma 18]{dalang99}. 
This argument is based on the fact that the stochastic integral with respect to $L$ 
has the following homogeneity property: 
for any predictable process  $g \in L^2(\Omega \times \bR_{+}\times \bR)$,

\noi
\begin{align}
\label{homog-int}
\int_0^{\infty}\int_{\bR}g(s, y)L(ds, dy)=\int_0^{\infty}\int_{\bR}g^{(z)}(s, y) L^{(z)}(ds, dy),
\end{align}

\noi
where $g^{(z)}, L^{(z)}$ are defined as in \eqref{XLz}.
To prove \eqref{homog-int}, we first consider the case where 
$g$ is an elementary process of form \eqref{elem-X}.  
In this case, we have  $g^{(z)}(s,y)=X \ind_{(a,b]}(s) \ind_{B-z}(y)$
and
\[
\mbox{RHS of \eqref{homog-int}}
= X L^{(z)}\big((a,b] \times (B-z)\big)
=X L\big((a,b] \times B\big)
=\mbox{LHS of \eqref{homog-int}}.
\]
That is,  \eqref{homog-int} is proved for elementary processes, while
the general case follows by a standard approximation argument. 
Now suppose that $u_n$ satisfies the property (S), i.e.,

\noi
\begin{align}\label{eqd1}
\text{$(u_n^{(z)}, L^{(z)})$ has the same finite-dimensional distributions
as $(u_n, L)$}.
\end{align}

\noi
From
\eqref{nPicard}
\[
u_{n+1}(t,x) = 1 + \int_0^\infty \int_\R g_{t,x}(s, y) L(ds, dy).
\]
with $g_{t,x}(s,y) = \ind_{[0,t)}(s) G_{t-s}(x-y) \s( u_n(s,y)  )$,
we see that $g_{t,x}\in L^2(\O\times\R_+\times\R)$ is predictable 
and     we can deduce from  \eqref{homog-int} and \eqref{eqd1}
 that 
for any $m\in\N_{\geq 1}$, for any $\al_j, \be_j\in\R_0$,
and for any $(s_j, t_j, x_j, B_j)\in\R_+^2\times\R\times\cB_b(\R)$,
we have 

\noi
\begin{align*}
&\sum_{j=1}^m \big[ \al_j u^{(z)}_{n+1}(t_j, x_j) + \be_j L^{(z)}(s_j, B_j) \big] \\
 =& \sum_{j=1}^m  \al_j   + \int_{\R_+\times\R}    \sum_{j=1}^m \big[  \al_j g_{t_j, x_j +z}(s,y) 
+\be_j \ind_{[0,s_j]\times (z+B_j)}(s,y) \big] L(ds, dy) \\
=&
 \sum_{j=1}^m  \al_j  + \int_{\R_+\times\R}    \sum_{j=1}^m \big[  \al_j g_{t_j, x_j +z}(s,y+z) 
+\be_j \ind_{[0,s_j]\times (z+B_j)}(s,y+z) \big] L^{(z)}(ds, dy)  \\
=&  \sum_{j=1}^m  \al_j   + \int_{\R_+\times\R}    
\sum_{j=1}^m \big[  \al_j   G_{t_j-s}( x_j -y )   \s( u_n^{(z)}(s,y) ) 
+\be_j \ind_{[0,s_j]\times B_j}(s,y) \big] L^{(z)}(ds, dy) \\
 \stackrel{{\rm (law)}}{=}  &
  \sum_{j=1}^m  \al_j   + \int_{\R_+\times\R}    
\sum_{j=1}^m \big[  \al_j   G_{t_j-s}( x_j -y )   \s( u_n(s,y) ) 
+\be_j \ind_{[0,s_j]\times B_j}(s,y) \big] L(ds, dy) \\
=& \sum_{j=1}^m \big[ \al_j u_{n+1}(t_j, x_j) + \be_j L(s_j, B_j) \big] .
\end{align*} 
This concludes our proof. 
\qedhere
\end{proof}

\subsection{Proof of Theorem \ref{thm_main}}
\label{SEC5_2}

The proof consists of four parts as indicated by the statement of Theorem \ref{thm_main}.

\smallskip

\noi
$\bul$  \textbf{Part (i): \textsf{spatial ergodicity}.} 
For any fixed $t\in\R_+$,
the strict stationarity of $\{u(t,x)\}_{x \in \R}$ follows from
Lemma  \ref{lem:stat} immediately. 
The spatial ergodicity follows immediately from   \cite[Lemma 4.2]{BZ23}
and the key estimate \eqref{Z1} (with $p=2$). The argument is identical to that in 
 in the proof of \cite[Theorem 1.1-(i)]{BZ23} and thus omitted here. 
 See also \cite[Remark 1.4]{BZ23} for the discussion on the almost sure convergence and 
 $L^2(\O)$-convergence in \eqref{LLN}. Note that the proof of part (i) only requires 
 $m_2 < \infty$.
 \hfill $\square$

\smallskip

Recall from \eqref{FRT} that 
\[
F_R(t) = \int_{-R}^R [u(t,x) -1] dx.
\]
In the following, we will establish the limiting covariance structure of the process $F_R$.

\smallskip

\noi
$\bul$  \textbf{Part (ii): \textsf{limiting covariance structure}.} 
Fix any $t,s \in \R_{+}$. Using  \eqref{station} and $\E[ u(t,x) ] = 1$,
we can write 
\[
\bE\big[F_R(t) F_R(s)\big]
=\int_{-R}^R \int_{-R}^R {\rm Cov}\big(   u(t,x),   u(s,y) \big) dxdy
=\int_{-R}^R \int_{-R}^R  \rho_{t,s}(x-y) dxdy,
\]

\noi
where   $\rho_{t,s}(x-y) := {\rm Cov}\big(   u(t,x),   u(s,y) \big) $ depends on $x, y$
only through their difference. 
By the dominated convergence theorem, we can obtain \eqref{lim-cov}, since
\begin{align*}
\frac{1}{R} \bE[F_R(t)F_R(s)] 
& =\int_{|x|<2R} \rho_{t,s}(x)\left(1-\frac{|x|}{2R} \right)dx \\
&\xrightarrow{R\to+\infty}  2\int_{\bR}\rho_{t,s}(x)dx=:K(t,s)
\end{align*}

\noi
provided that 
\begin{equation}
\label{rho-integr}
\int_{\bR}|\rho_{t,s}(x)|dx<\infty.
\end{equation}

\noi
Let us verify the condition \eqref{rho-integr} now. 
From \eqref{2-Poincare} and \eqref{Z1}, we deduce that 

\noi
\begin{align*}
|\rho_{t,s}(x)| & \leq \int_0^{\infty}\int_{\R} \int_{\R_0}\| D_{r,y,z}u(t,x)\|_2 \| D_{r,y,z}u(s,0)\|_2dx \\
&\les m_2 \int_0^{\infty}\int_{\bR} G_{t-r}(x-y) G_{s-r}(y)dydr,
\end{align*}

\noi
and here we assume $m_2 < \infty$ in part (ii).  With $G_{t-r} = 0$ for $r\geq t$
and $\int_\R G_t(x)dx = t$ for any $t\in\R_+$,
we deduce  immediately that 

\noi
\begin{align*}
\int_{\R}|\rho_{t,s}(x)|dx
&\les \int_\R \bigg(  \int_0^{t\wedge s}\int_{\bR} G_{t-r}(x-y) G_{s-r}(y)dydr  \bigg) dx \\
&= \int_0^{t\wedge s} (t-r) (s-r)dr < +\infty.
\end{align*} 
Therefore, the limit \eqref{lim-cov} holds true.  
Next, 
we prove that $\s_R^2(t): = \E[  F^2_R(t)] >0$ for all $t>0$ and $R>0$. 
We proceed as in \cite[Lemma 3.4]{DNZ20}. 
Assume that $\s_{R_0}^2(t_0)=0$ for some $t_0>0$ and $R_0>0$. 
Using \eqref{mild}, the isometry property \eqref{m_iso}, 
and the stationarity of $\{u(t,x)\}_{x \in \bR}$, we have

\noi
\begin{align}
0 = \s_{R_0}^2(t_0)&=  \int_{-R_0}^{R_0} \int_{-R_0}^{R_0}  \E\big[\big(u(t_0,x_1)-1\big)\big(u(t_0, x_2)-1\big)\big]dx_1 dx_2  \notag\\
&=m_2  \int_{-R_0}^{R_0} \int_{-R_0}^{R_0} \int_0^{t_0} 
\int_{\R} G_{t_0-s}(x_1-y) G_{t_0-s}(x_2-y) \E\big[|\s(u(s,y))|^2\big] dyds dx_1 dx_2 \notag \\
&=m_2  \int_0^{t_0} \bE\big[|\s(u(s,0))|^2\big]   
\bigg[ \int_\R   \Big( \int_{-R_0}^{R_0} G_{t_0 -s}(x-y)  dx \Big)^2   dy \bigg]     ds. \label{bigB}
\end{align}

\noi
It is easy to see that the integral (with respect to $dy$) in the big bracket in \eqref{bigB}
is strictly positive unless $s \geq t_0$.
Recall that the function $s\in\R_+ \mapsto \bE\big[|\s(u(s,0))|^2\big]  $ is continuous
due to the $L^2$-continuity of $u$ and Lipschitz property of $\s$,
and thus,  we have $\bE\big[|\s(u(s,0))|^2\big] =0$ for any $s\in[0, t_0].$
In particular, 
\[
0 =  \bE\big[|\s(u(0,0))|^2\big]=|\s(1)|^2,
\] 
 which is a contradiction to our standing assumption that $\s(1) \neq 0$.
 This concludes the proof of part (ii). 
  \hfill $\square$

\smallskip

\noi
$\bul$ 
 \textbf{Part (iii): \textsf{quantitative  CLT}.} 
 Assume $m_4 < + \infty$ in the following. We will apply Theorem \ref{PSTU-new}
 to prove the bound  \eqref{QCLT-rate}.
 
Let $X = F_R(t) / \s_R(t)$. Since
 \begin{align*}
 u(t, x) - 1 
 &= \int_0^t \int_{\R\times\R_0} G_{t-s}(x-y)z \s( u(s,y) ) \wh{N}(ds, dy, dz) \\
  &= \dl( V_0  )\quad\text{with $V_0(s,y, z) = G_{t-s}(x-y)z \s( u(s,y) )$}
 \end{align*}
 in view of Lemma \ref{lem25}-(ii) and convention \eqref{def-G}.
Recall that $\varphi_{t,R}(r,y)$ is defined by \eqref{VTR}, and vanishes when $r\geq t$.

Now,   we can write by stochastic  Fubini that  
 
 \noi
 \begin{align}\label{vectorV}
 X =  \dl (V_{t, R})/ \s_R(t)
  \quad\text{with $V_{t, R}  (s,y, z) = \varphi_{t, R}(s, y) z \s( u(s,y) )$;}
 \end{align}
 
 \noi
 see \cite[Lemma 2.6]{BZ23}.

Note that $X$ is centered with variance one. Then, we apply  
Theorem \ref{PSTU-new}   to write 
\begin{align}  \label{Qa}
\begin{aligned} 
&d_{\rm Wass}\big( X,  \cN(0,1)  \big) \\
&\les  \sqrt{\Var( \jb{ DX,  \tfrac{V_{t, R}}{ \s_R(t) }  }_\fH)} +
\frac{1}{\s^3_R(t)} \int_{\mathbf{Z}} \| (D_{r, y,z} F_R(t) )^2 V_{t, R} (r, y, z) \|_1 \, drdy\nu(dz) \\
&=: \mathbf{T}_1 +  \mathbf{T}_2.
\end{aligned}
\end{align}

\noi
Let us first deal with the second term $\mathbf{T}_2 $.

\noi
\begin{align}\label{DFR}
D_{r,y,z}F_R(t) = \int_{-R}^{R}D_{r,y,z}u(t,x)dx,
\end{align}

\noi
and thus by Proposition \ref{prop_MD}, we have 

\noi
\begin{align}  \notag
\| D_{r,y,z}F_R(t)  \|_4 \leq   \int_{-R}^{R} \| D_{r,y,z}u(t,x )\|_4 dx 
\les |z|   \varphi_{t, R}(r, y)  .
\end{align}

\noi
Therefore, it follows from the above discussion, Cauchy-Schwarz, and  \eqref{KTPs} that 

\noi
\begin{align}\label{bdd_T2}
\begin{aligned}
 \mathbf{T}_2
& \leq \frac{1}{\s^3_R(t)}\int_{\mathbf{Z}}  \| D_{r,y,z}F_R(t)  \|^2_4 
 \cdot   \varphi_{t, R}(r, y)   |z| \cdot \| \s( u(s,y) ) \|_2 \, drdy \nu(dz) \\
 &\les R^{-\frac32} \int_0^t\int_{\R\times\R_0} |z|^3  \varphi^3_{t, R}(r, y) 
 \, drdy \nu(dz).
\end{aligned}
\end{align}
Then, using $m_3 = \int_{\R_0} |z|^3 \nu(dz) < \infty$ (due to $m_2, m_4 <\infty$)
and 
$  \varphi^3_{t, R}(r, y) \leq  t^2   \varphi_{t, R}(r, y)$,
we deduce from \eqref{bdd_T2} that 

\noi
\begin{align}\label{bdd_T2b}
\begin{aligned}
 \mathbf{T}_2
& \les R^{-\frac32} \int_0^t \int_\R   \bigg(  \int_{-R}^R G_{t-r}(x-y) dx  \bigg)
 \, drdy \\
& \les R^{-\frac12},
\end{aligned}
\end{align}

\noi
where the last inequality follows by performing integration in the order
of $dy, dx$, $dr$, and the implicit constants in  \eqref{bdd_T2b}
  depend on $\s, t$ but not  on $R$.

\smallskip

Now let us bound the first term   in \eqref{Qa}:
\begin{align} \notag
\mathbf{T_1} = \sqrt{\Var( \jb{ DX,  V_{t, R}/\s_R(t) }_\fH)}
= \frac{1}{\s^2_R(t)} \sqrt{  \Var( \jb{ DF_R(t),  V_{t, R} }_\fH)}.
\end{align}
Then, in view of $\s^2_R(t)\asymp R$,
 it suffices to prove
\begin{align} \label{bdd_T1a}
 \Var( \jb{ DF_R(t),  V_{t, R} }_\fH) \les  R.
\end{align}
First, we decompose the inner product using \eqref{DFR} and
 the integral equation \eqref{eq_MD}
for $D_{r, y, z} u(t,x)$:

\noi
\begin{align} \label{B_decomp}
\begin{aligned}
 \jb{ DF_R(t),  V_{t, R} }_\fH 
 &=    \int_{\mathbf{Z}}  \bigg(\int_{-R}^R D_{r, y, z} u(t,x) dx \bigg) 
 \varphi_{t, R}(r,y) z  \s\big( u(r, y) \big) drdy \nu(dz) \\
  &=   \mathbf{B}_1 +  \mathbf{B}_2 ,
\end{aligned}
\end{align}

\noi
where,  with $\varphi_{t,R}$ as in \eqref{VTR},

\noi
\begin{align}  \notag
 \mathbf{B}_1:&= \int_{0}^t \int_{\R\times\R_0} \big|  \varphi_{t, R}(r,y) z  \s\big( u(r, y) \big) \big|^2 
 drdy \nu(dz)  \notag  \\
 &= m_2\int_{0}^t \int_{\R} \big|  \varphi_{t, R}(r,y)   \s\big( u(r, y) \big) \big|^2 
 drdy   ,    \notag%
\end{align}
and 
\begin{align}  
 \mathbf{B}_2:&= 
 \int_{0}^t \int_{\R\times\R_0}
\mathcal{J}(r, y, z)   \varphi_{t, R}(r,y) z  \s\big( u(r, y) \big) 
 drdy \nu(dz)  \label{B12b}
\end{align}

\noi
with
\begin{align}   \label{def_cJ}
\begin{aligned} 
\mathcal{J}(r, y, z) &:= 
\int_{-R}^R\bigg[  \int_{r}^t \int_{\R\times\R_0} G_{t-s_1}(x-y_1)z_1 D_{r,y,z} \s( u(s_1, y_1)) 
 \wh{N}(ds_1,dy_1, dz_1) \bigg] dx \\
 &= \int_{r}^t \int_{\R\times\R_0}   \varphi_{t, R}(s_1, y_1)   z_1 D_{r,y,z} \s( u(s_1, y_1)) 
 \wh{N}(ds_1,dy_1, dz_1).
\end{aligned}
\end{align}

\noi
Then, in view of 
\begin{align}\label{VAR_sum}
\sqrt{ \Var(  \mathbf{B}_1 + \mathbf{B}_2 )} \leq  \sqrt{ \Var(  \mathbf{B}_1   )  }
+  \sqrt{ \Var(  \mathbf{B}_2 )},
\end{align} 

\noi
we only need to prove 
$\Var(  \mathbf{B}_k) \les R$ for $k=1,2$.

\smallskip

 $\star$ {\bf Estimation of  $  \Var(  \mathbf{B}_1) $.} 
 Observe first that for a jointly measurable 
 random field  $\{ \Psi(r,y): (r,y)\in\R_+\times\R\}$
 such that $\int_{\R_+\times\R} \| \Psi(r,y)\|_2^2 drdy <\infty$,
 we have 
 \begin{align}\label{VAR0}
  \begin{aligned}
 \sqrt{ \Var \bigg(  \int_{\R_+} \int_\R \Psi(r,y) dr dy\bigg) }
 &= \bigg\|  \int_{\R_+} \int_\R  \big( \Psi(r,y) - \E[ \Psi(r,y) ] \big) dr dy \bigg\|_2 \\
 &\leq  \int_{\R_+}  \Big\|  \int_\R  \big( \Psi(r,y) - \E[ \Psi(r,y) ] \big) dy \Big\|_2 dr  \\
 & =   \int_{\R_+}   \bigg(  \int_{\R^2} {\rm Cov}( \Psi(r,y),  \Psi(r,y')    \big) dy dy' \bigg)^{\frac12} dr.
 \end{aligned} 
 \end{align} 

\noi
Note that with $\Psi(r,y) =  \big|  \varphi_{t, R}(r,y)   \s\big( u(r, y) \big) \big|^2$,
we can deduce from Lemma \ref{2-Poincare} and Corollary \ref{cor_DSU}
that 

\noi
\begin{align}\label{use_DSU}
\begin{aligned}
&{\rm Cov}( \Psi(r,y),  \Psi(r,y')    \big) 
= \varphi^2_{t, R}(r,y) \varphi^2_{t, R}(r,y')  {\rm Cov} \big(  \s^2(u(r, y)),  \s^2(u(r, y'))   \big) \\
&\leq   \varphi^2_{t, R}(r,y) \varphi^2_{t, R}(r,y')  \cdot
C_t \int_0^r \int_{\R\times\R_0} G_{r-r_0}(y-y_0)  \\
&\qquad\qquad\qquad \times G_{r-r_0}(y'-y_0)  |z_0|^2 (1+ |z_0|)^2 dr_0 dy_0 \nu(dz_0)\\
&\leq C_t (m_2 + m_4)
\varphi^2_{t, R}(r,y) \varphi^2_{t, R}(r,y')  \cdot
\int_0^r \int_{\R} G_{r-r_0}(y-y_0) G_{r-r_0}(y'-y_0)   dr_0 dy_0,
\end{aligned}
\end{align}

\noi
where $C_t$ is the constant  in \eqref{Dsu}. Using the triangle inequality,
we have the following rough bound
\[
G_{r-r_0}(y-y_0) G_{r-r_0}(y'-y_0) \leq G_{2r}(y-y') G_r(y-y_0), 
\]
from which, we deduce that 
\[
\int_0^r \int_{\R} G_{r-r_0}(y-y_0) G_{r-r_0}(y'-y_0)   dr_0 dy_0
\leq r^2 G_{2r}(y-y') \leq t^2 G_{2r}(y-y').
\]
That is, we have 
\[
{\rm Cov}( \Psi(r,y),  \Psi(r,y')    \big) \les 
\varphi^2_{t, R}(r,y) \varphi^2_{t, R}(r,y') G_{2r}(y-y'),
\]
with the implicit constant depending on $t$. Thus, it follows from 
\eqref{VAR0}    that 

\noi
\begin{align}\label{SB1}
\sqrt{ \Var(  \mathbf{B}_1) }
\les 
  \int_{0}^t   \bigg(  \int_{\R^2} \varphi^2_{t, R}(r,y) \varphi^2_{t, R}(r,y') G_{2r}(y-y')
   dy dy' \bigg)^{\frac12} dr.
\end{align}

\noi
Since $\varphi_{t,R}(r,y) = \int_{-R}^R G_{t-r}(x-y)dx \leq t \ind_{\{r < t\}}$
and  $\int_{\R} \varphi_{t,R}(r,y) dy = 2(t-r)R \ind_{\{r < t\}}$,
we have 
\begin{align}\label{SB1b}
\begin{aligned}
 \int_{\R^2} \varphi^2_{t, R}(r,y) \varphi^2_{t, R}(r,y') G_{2r}(y-y')
   dy dy'
  &\leq t^3  \int_{\R^2} \varphi_{t, R}(r,y)  G_{2r}(y-y')
   dy dy' \\
  & \leq 4 t^5 R,
   \end{aligned}   
   \end{align}
where the last step follows by performing integration first in $y'$ and then in $y$.
Combining \eqref{SB1} and \eqref{SB1b} yields
\begin{align} \label{SB1c}
  \Var(  \mathbf{B}_1)  \les  R.
\end{align}

 \smallskip

 $\star$ {\bf Estimation of  $  \Var(  \mathbf{B}_2) $.} 
 Recall from \eqref{B12b} and \eqref{def_cJ} the expressions of $\mathbf{B}_2$
 and $\mathcal{J}(r, y,z)$.
 Then, it follows from \eqref{VAR0}  that

 \noi
 \begin{align}\label{VARB21}
  \begin{aligned}
 \sqrt{ \Var(  \mathbf{B}_2) }
 & \leq \int_0^t 
\bigg\|  \int_{\R\times\R_0}
\mathcal{J}(r, y, z)   \varphi_{t, R}(r,y) z  \s\big( u(r, y) \big) 
dy \nu(dz)  \bigg\|_2  \,  dr \\
&=  \int_0^t 
 \sqrt{ \mathcal{H}_r } \, dr,
 \end{aligned} 
 \end{align}
 
 \noi
where $ \mathcal{H}_r$ denotes the second moment of the integral over $\R\times\R_0$,
i.e.,  
 \begin{align*}
 \mathcal{H}_r:&=
  \int_{(\R\times\R_0)^2}
\E[ \mathcal{J}(r, y, z)\mathcal{J}(r, y', z')   \s( u(r, y) )  \s( u(r, y') )   ] \\
&\qquad\qquad\times 
 \varphi_{t, R}(r,y) z     \varphi_{t, R}(r,y') z'     
dydy' \nu(dz) \nu(dz').
\end{align*}
Let us first estimate the term 
$\E[ \mathcal{J}(r, y, z)\mathcal{J}(r, y', z')   \s( u(r, y) )  \s( u(r, y') )   ]$.
 Since  $  \s(u(r,y))$,   $\s( u(r, y') )$ are $\F_r$-measurable, 
 we deduce from  Lemma \ref{Ito-int} and Lemma \ref{lem25}-(iv) that 
 
 \noi
  \begin{align*}
& \E\big[   \s(u(r,y)) \s( u(r, y') ) \mathcal{J}(r, y',z')\mathcal{J}(r, y,z) \big] \\
& =\E\bigg[  \Big( \int_r^t \int_{\R\times\R_0}  \s(u(r,y)) \varphi_{t, R}(s_1, y_1)  z_1 D_{r, y, z}   \s(u(s_1,y_1)) 
 \wh{N}(ds_1, dy_1, dz_1) \Big) \\
 &\qquad\qquad\times
  \int_r^t \int_{\R\times\R_0}  \s(u(r,y')) \varphi_{t, R}(s_1, y_1)  z_1 D_{r, y', z'}   \s(u(s_1,y_1)) 
 \wh{N}(ds_1, dy_1, dz_1) \bigg] \\
 &=m_2  \int_r^t \int_{\R\times\R_0} \E\Big[  \s(u(r,y))    \s(u(r,y'))      
  \big(  D_{r, y, z}   \s(u(s_1,y_1))  \big)  D_{r, y', z'}   \s(u(s_1,y_1))    \Big] \\
   &\qquad\qquad\times
         \varphi^2_{t, R}(s_1, y_1)   ds_1 dy_1,   
 \end{align*} 
 whose modulus  is bounded by
 
 \noi
 \begin{align*}
 \les  |zz' |   \int_r^t \int_{\R}     
 G_{s_1-r}(y_1-y) G_{s_1-r}(y_1-y')  
         \varphi^2_{t, R}(s_1, y_1)   ds_1 dy_1
 \end{align*}
 
 \noi
 using H\"older's inequality,  \eqref{Z1} with $p=4$,  \eqref{add1b}, and \eqref{KTPs}.
 Then, we have 
  
  \noi
  \begin{align*} 
 \mathcal{H}_r 
 & \les
   \int_r^t \int_{\R^3} \varphi^2_{t, R}(s_1, y_1)   
  G_{s_1-r}(y_1-y)   G_{s_1-r}(y_1-y')  
   \varphi_{t, R}(r,y)      \varphi_{t, R}(r,y') 
  ds_1 dy_1 dy dy' \\
  &\les \int_r^t \int_{\R^3} \varphi_{t, R}(s_1, y_1)   
  G_{s_1-r}(y_1-y)   G_{s_1-r}(y_1-y')  
  ds_1 dy_1 dy dy'  \quad\text{(using $\varphi_{t,R}\leq t$)} \\
&=     \int_r^t  (s_1-r)^2 \bigg( \int_{\R} \varphi_{t, R}(s_1, y_1)     dy_1  \bigg)   ds_1 
=  2R  \int_r^t  (s_1-r)^2  (t-s_1)  ds_1  \les R.
 \end{align*}

\noi
 Note that the above implicit constant does not depend on $r$, so that we deduce 
 easily from \eqref{VARB21} that 
 
 \noi
 \begin{align}\label{SB2c}
 \Var(  \mathbf{B}_2)   \les  R.
 \end{align}
 
 Hence, the desired bound \eqref{bdd_T1a}
 follows from \eqref{B_decomp}, \eqref{VAR_sum}, \eqref{SB1c}, and
 \eqref{SB2c}.
 That is, we have $\mathbf{T}_1 \les 1/\sqrt{R}$,
 which together with \eqref{bdd_T2b} and \eqref{Qa},
 implies the bound \eqref{QCLT-rate}.
 This concludes our proof of the quantitative CLT. 
  \hfill $\square$

 \bigskip

\noi
$\bul$ 
\textbf{Part (iv): \textsf{functional CLT}.} 
We first establish the  convergence of  finite-dimensional distributions (f.d.d. convergence)
and then prove the tightness.

\smallskip

$\star$  \textbf{f.d.d. convergence.}
For the first step, it suffices to show that as $R \to \infty$,
\[
\left(\frac{1}{\sqrt{R}}F_R(t_1),\ldots, \frac{1}{\sqrt{R}}F_R(t_m)\right) 
\stackrel{{\rm (law)}}{\longrightarrow} \big(\cG(t_1),\ldots,\cG(t_m) \big)
\]
for any $0\leq t_1<\ldots<t_m$. By the Cram\'er-Wold theorem, 
it is enough  to prove that 

\noi
\begin{align}  \notag  
X_R:=\frac{1}{\sqrt{R}}\sum_{j=1}^{m}b_j F_R(t_j) 
\stackrel{{\rm (law)}}{\longrightarrow} X:=\sum_{j=1}^{m}b_j \cG(t_j)
\end{align}

\noi
for any $b_1,\ldots,b_m \in \bR$. From  \eqref{lim-cov}, 
it follows that 
\[
\tau_R^2:={\rm Var}(X_R)
=\frac{1}{R}\sum_{i,j=1}^{m}b_i b_j \bE[F_R(t_i)F_R(t_j)] \xrightarrow{R\to+\infty}
 \sum_{i,j=1}^{m}b_i b_j K(t_i,t_j)=:\tau^2.
\]
The rest of the proof is trivial if $\tau^2=0$. We assume that $\tau^2 >0$,
and thus, $\tau_R>0$ for $R$ large. In this case, it suffices to show 
\[
\frac{X_R}{\tau_R} \stackrel{{\rm (law)}}{\longrightarrow} 
Z \sim \mathcal{N}(0,1) \quad \mbox{as $R \to \infty$},
\]
and in view of \cite[Proposition C.3.1]{blue}, 
we will prove the following stronger statement:
\begin{equation}
\label{dW-Xtau}
\lim_{R\to+\infty} d_{\rm Wass}\big( X_R/\tau_R,  \cN(0,1) \big) = 0. 
\end{equation}

\noi
To prove \eqref{dW-Xtau}, we apply the Malliavin-Stein bound \eqref{MSB2} to the variable 
$G_R=X_{R}/\tau_R$, which is centered and of variance $1$. 
Moreover, as in \eqref{vectorV}, we can write 
\[
\text{$F_R(t)=\delta(V_{t,R})$ with $V_{t,R}(r, y, z): =  \varphi_{t, R}(r, y) z \s( u(r, y))$,}
\]

\noi
where $\varphi_{t,R}$ is given in \eqref{VTR}.
Then, we have  $G_R=\delta(U_R)$ with
\[
U_R=\frac{1}{\tau_R \sqrt{R}}\sum_{j=1}^{m}b_j V_{t_j,R}.
\]
Therefore, the  Malliavin-Stein bound \eqref{MSB2} implies that
\begin{align}\label{FCLT1}
d_{\rm Wass}\big( X_R/\tau_R,  \cN(0,1) \big) 
 \leq \sqrt{{\rm Var}\big(\langle DG_R, U_R \rangle_{\fH}\big)}
 +  \int_{\mathbf{Z}} \big\| (D_{\xi} G_R)^2  U_R(\xi) \big\|_1  \fm(d\xi).
\end{align}

\noi
Since  
$ \langle DG_R, U_R \rangle_{\fH}=\frac{1}{\tau_R^2 R}\sum_{i,j=1}^{m}b_i b_j \langle 
DF_R(t_i), V_{t_j,R} \rangle_{\fH}
$,
we deduce from Minkowski's inequality (i.e., 
$\sqrt{ \Var(\sum_{i=1}^{n}Y_i )}\leq \sum_{i=1}^{n} \sqrt{{\rm Var}(Y_i)}$
for square-integrable random variables $Y_i$'s)
  that 
 
 \noi
 \begin{align} \notag
\sqrt{{\rm Var}\big(\langle DG_R, U_R \rangle_{\fH}\big)}
\leq \frac{1}{\tau_R^2 R} \sum_{i,j=1}^{m}|b_i b_j| \sqrt{{\rm Var}\big( \langle 
DF_R(t_i), V_{t_j,R} \rangle_{\fH} \big)}.
\end{align}

Recall from \eqref{bdd_T1a} that 
\[
   \Var\big( \langle 
DF_R(t_i), V_{t_j,R} \rangle_{\fH} \big) 
 \les  R
\]
for $t_i = t_j = t$,
provided that $m_4<\infty$.
Indeed,  the same argument (with minor modifications) can be used to show
the same bound for $t_i \neq t_j$, so that 
with $\tau_R\to \tau > 0$, 
we have
 
 \begin{align} \label{FCLT1b}
   \sqrt{{\rm Var}\big(\langle DG_R, U_R \rangle_{\fH}\big)}
\les  R^{-1/2}
\end{align}     as $R$ tends to infinity. 

Next, we examine the second term in \eqref{FCLT1}. 
First, we write using the definitions of $G_R, U_R$
and the elementary inequality $ (b_1 + ... + b_m)^2 \leq m (b_1^2 +... + b_m^2)$
that 

\noi
\begin{align*}
  \int_{\mathbf{Z}} \big\| (D_{\xi} G_R)^2  U_R(\xi) \big\|_1  \fm(d\xi)
  &\leq \frac{m}{\tau_R^3 R^{3/2}} \sum_{i, j=1}^m |b_i| b_j^2   \int_{\mathbf{Z}} \big\|  ( D_{\xi}F_R(t_j) )^2   V_{t_i,R}(\xi)\big\|_1 \fm(d\xi).
\end{align*}

\noi
Note that we have proved  
that
\[
  \int_{\mathbf{Z}} \big\|  ( D_{\xi}F_R(t_j) )^2   V_{t_i,R}(\xi)\big\|_1 \fm(d\xi) \les  R 
  \]
for $t_i = t_j = t$;
see \eqref{Qa} and \eqref{bdd_T2b} with $\s_R(t) \asymp \sqrt{R}$.
The same argument with very minor modifications
can lead to the same bound for $t_i \neq t_j$, and thus, 

\noi
 \begin{align} \label{FCLT1c}
   \int_{\mathbf{Z}} \big\| (D_{\xi} G_R)^2  U_R(\xi) \big\|_1  \fm(d\xi)\les R^{-1/2}
   \end{align}
as $R$ tends to infinity. 

\smallskip

Hence,   \eqref{dW-Xtau} follows from \eqref{FCLT1},  \eqref{FCLT1b}, and  \eqref{FCLT1c},
which verifies  the convergence of  finite-dimensional distributions.  

\smallskip 

$\star$  \textbf{Tightness}. In this step, 
we show that $\{\frac{1}{\sqrt{R}}F_R\}_{R>0}$ is tight in $C([0,T]; \R)$
 for any finite  $T>0$. 
 By \cite[Theorem 12.3]{billingsley68}, it is enough to prove that  
\begin{align}\label{tight1}
\bE[|F_R(t)-F_R(s)|^p] \leq C R^{p/2}|t-s|^p 
\end{align}
for all $t,s \in [0,T]$,
where $p\geq 2$ and
 the constant $C$ does not depend on $t, s,$ nor $R$.
In \cite[Proposition 2.10-(ii)]{BZ23} that deals with the linear case (i.e.,  $\sigma(u)=u$), 
we  established  \eqref{tight1}
under the assumption that  $m_p<\infty$ for some $p \geq 2$.
The exactly same argument, together with Remark \ref{rem_MD},
can lead to the same bound \eqref{tight1}.\footnote{More precisely, 
 the minor change we need here is to use 
 \eqref{KTPs}
 in place of  $\sup\{ \| u(t,x)\|_2 : (t,x)\in[0, T] \times\R \} <\infty$. } 
For our purpose, if we only assume $m_2 <\infty$, the bound 
 \eqref{tight1} holds with $p=2$, which is enough to conclude the tightness. 
We omit the details and refer interested readers to 
 \cite[Proposition 2.10]{BZ23}.

 \smallskip
 
 Hence, the functional CLT in Theorem \ref{thm_main}-(iv) is established
 and thus the proof of Theorem \ref{thm_main} is completed.
 \hfill $\square$

\subsection{Proof of Theorem \ref{thm3}} \label{SEC5_3}

We spilt the section into two parts: in the first part, we prove the bound
\eqref{qindep} and address the asymptotic independence between the spatial
integral (for a fixed time) and any finite-dimensional marginals of the random field 
solution $u$; in the second part, we establish the   asymptotic independence 
result for the finite-dimensional marginals of $\{ F_R(t)/\sqrt{R}: t\in\R_+\}$ and those of $u$.

\smallskip
\noi
$\bul$ {\bf Part $\I$.}
We apply the multivariate Malliavin-Stein bound \eqref{MSB_new}
with 
\begin{align}\label{bfY}
\text{$X= X_R=\frac{F_R(t)}{\s_{R}(t)} = \dl(V_{t, R}/\s_R(t))$ as in \eqref{vectorV} 
and
$\mathbf{Y}=\big(u(t_1,x_1),\ldots,u(t_d,x_d)\big).$}
\end{align}
That is, we have, 
with $Z'\sim\cN(0,1)$ independent of  $\mathbf{Y}$,

\noi
\begin{align*}
& d_{\rm Wass}\big(  (X, \mathbf{Y}),  ( Z', \mathbf{Y})     \big) \\
& \leq 
 \sqrt{2/\pi}  
 \| 1 -\langle DX, V_{t, R}/\s_R(t) \rangle_{\fH} \|_1   
 +  \frac{1}{\s^3_R(t)} \int_{\mathbf{Z}}  \big\| V_{t, R}(\xi)   ( D_\xi F_R(t) )^2  \big\|_1 \, \fm(d\xi)
\\
&\qquad\qquad + 
 \frac{ \sqrt{2\pi} }{\s_R(t)} \sum_{j=1}^d   \int_{\mathbf{Z}}  \big\| V_{t,R}(\xi)    D_\xi  u(t_j, x_j)   \big\|_1 \, \fm(d\xi) .
\end{align*}
As already pointed out in Remark \ref{rem_MSB_new},
the first two terms quantifies the proximity to normality of $X$
and we have already established that they are bounded by $\les 1/\sqrt{R}$.
It remains to bound the third term.

From \eqref{vectorV} and \eqref{VTR}, 
we have 
$V_{t,R}(s,y,z) = \varphi_{t, R}(s, y) z \s( u(s, y))$,
so that 
\[
\|V_{t,R}(s,y,z)\|_2 \les  \varphi_{t, R}(s, y) |z|, 
\]
in view of \eqref{KTPs}.
Next, applying  Cauchy-Schwarz inequality and  the key estimate \eqref{Z1},
 we can obtain that 
\begin{align*}
  \int_{\mathbf{Z}}  \big\| V_{t, R}(\xi)    D_\xi  u(t_j, x_j)   \big\|_1 \, \fm(d\xi) 
   & \leq 
   \int_{\mathbf{Z}}   \| V_{t,R}(\xi) \|_2   \| D_\xi  u(t_j, x_j)  \|_2 \, \fm(d\xi) \\
   &\les   \int_0^{t\wedge t_j} \int_\R \int_{\R_0}   \varphi_{t, R}(s, y) 
   G_{t_j-s}(x_j -y) |z|^2    \, dsdy \nu(dz) 
   \les 1,
\end{align*}
where the last step follows
from $\varphi_{t,R}(s,y) \leq t$, $m_2 = \int_{\R_0} |z|^2\nu(dz)  <\infty$,
and  $\int_\R  G_{t_j-s}(x_j -y)dy = t_j -s$ for $s< t_j$.
Thus, with $\s_R(t) \asymp \sqrt{R}$, we can prove the bound \eqref{qindep}.
Then the asymptotic independence of $X_R$ and $\mathbf{Y}$ (in the sense of 
\eqref{adp})
 follows easily by a triangle inequality: 
 
 \noi
 \begin{align}\label{adpB}
 \begin{aligned}
  d_{\rm Wass}\big(   (X_R ,   \mathbf{Y} ), 
 \big(\wh{X}_R,   \mathbf{Y}   )  \big)  
& \leq  d_{\rm Wass}\big(   (X_R ,   \mathbf{Y} ), 
 \big(Z',   \mathbf{Y}   )  \big)   +  d_{\rm Wass}\big(   (Z' ,   \mathbf{Y} ), 
 \big(\wh{X}_R,   \mathbf{Y}   )  \big)   \\
 &\leq \frac{C}{\sqrt{R}} + d_{\rm Wass} (\wh X_R,   Z' )    \les R^{-\frac12},
 \end{aligned}
 \end{align}

\noi
where  (i) $\wh X_R$ is a  copy of $X_R$ that is independent of $\mathbf{Y}$,
 (ii)  the second inequality follows from the definition of Wasserstein distance \eqref{WASS_def},
and (iii)  the last inequality follows from Theorem \ref{thm_main}.
It is then clear that the condition \eqref{adp} is satisfied,
 as a consequence of  \eqref{adpB}.

\smallskip
\noi
$\bul$ {\bf Part $\II$.} We  proceed by proving the   asymptotic independence 
result for  the finite-dimensional distributions of the spatial integral process
and the random field solution. 
This can be done by applying the Malliavin-Stein bound as in 
 {\bf Part $\I$} with $X$ replaced by finite linear combinations 
 of marginals of $F_R(\bul)/\sqrt{R}$ as in the proof of f.d.d. convergences
 (see {\bf Part (iv)} in Section \ref{SEC5_2}).
In what follows, we proceed differently by directing proving \eqref{adp} using
the Poincar\'e inequality (Lemma \ref{2-Poincare}). 

Put $\mathbf X_R= R^{-\frac12}  (  F_R(s_1), ... ,    F_R(s_j)   )$
for any given $s_1, ..., s_j\in\R_+$ and $j\in\N_{\geq 1}$. Let $\mathbf{Y}$ be as in \eqref{bfY},
and let $f, g$ be  Lipschitz functions on $\R^j, \R^d$ respectively (not necessarily bounded). 
Next, we show
\begin{align}\label{531}
\text{\rm Cov}\big( f(\mathbf{X}_R), g(\mathbf{Y})  \big) \xrightarrow{R\to+\infty} 0.
\end{align}
Indeed, it  first 
follows from Lemma \ref{2-Poincare} and Lemma \ref{chain} with \eqref{2_LIP}
that

\noi
\begin{align*}
&\text{\rm Cov}\big( f(\mathbf{X}_R), g(\mathbf{Y})  \big) 
\leq \int_{\R_+\times\R\times\R_0}  \| D_{r, y, z}  f(\mathbf{X}_R)  \|_2
\| D_{r, y, z}  g(\mathbf{Y})  \|_2 drdy \nu(dz) \\
& \qquad  \les  \sum_{i=1}^j  \sum_{i'=1}^d  \int_{\R_+\times\R\times\R_0} \frac{1}{\sqrt{R}} \| D_{r, y, z} F_R(s_i)  \|_2
\| D_{r, y, z} u(t_{i'}, x_{i'})   \|_2 drdy \nu(dz).
\end{align*}
Using $ \| D_{r, y, z} F_R(s_i)  \|_2 \leq \int_{-R}^R   \| D_{r, y, z}  u(s_i, x)  \|_2 dx$
and \eqref{Z1}, we can further write 

\noi
\begin{align*}
&\text{\rm Cov}\big( f(\mathbf{X}_R), g(\mathbf{Y})  \big)   \\
&  \les  \frac{1}{\sqrt{R}}   \sum_{i=1}^j  \sum_{i'=1}^d  
 \int_{\R_+\times\R\times\R_0} \bigg( \int_{-R}^R
 G_{s_i - r}(x-y) dx \bigg) G_{t_{i'} -r }( x_{i'} -y) |z|^2  drdy \nu(dz) \\
 & \les  \frac{1}{\sqrt{R}}   \sum_{i=1}^j  \sum_{i'=1}^d  
 \int_0^{ \min\{ s_i, t_{i'} \}  }\int_{\R} \bigg( \int_{-R}^R
 G_{s_i - r}(x-y)dx \bigg) G_{t_{i'} -r }( x_{i'} -y)  drdy  
  \les  \frac{1}{\sqrt{R}}, 
\end{align*}

\noi
where the last step follows from the fact that  $\int_\R G_t(x)dx = t$.
This proves \eqref{531}
and concludes our proof of Theorem \ref{thm3}.
 \hfill $\square$

\end{document}